\newtheorem{theorem}{Theorem}[section]
\newtheorem{lemma}[theorem]{Lemma}
\newtheorem{proposition}[theorem]{Proposition}
\newtheorem{corollary}[theorem]{Corollary}
\theoremstyle{definition}
\newtheorem{definition}[theorem]{Definition}
\newtheorem{example}[theorem]{Example}
\newtheorem{notation}{Notation}
\newtheorem{remark}[theorem]{Remark}
\numberwithin{equation}{section}
\newcommand{\join}{\vee}
\newcommand{\meet}{\wedge}
\newcommand{\alg}[1]{{\textbf{\upshape #1}}}  %
\newcommand{\vv}[1]{\mathsf {#1}}
\newcommand{\sse}{\subseteq}
\newcommand{\II}{{\mathbf I}} 
\newcommand{\FL}{\vv F\vv L}
\newcommand{\cc}[1]{\mathcal{#1}}
\newcommand{\si}{{\rm SI}}
\newcommand{\back}{\backslash}
\newcommand{\VS}{\mathcal{VS}}
\newcommand{\sell}{\sigma^\ell}
\newcommand{\sr}{\sigma^r}
\newcommand{\gell}{\gamma^\ell}
\newcommand{\gr}{\gamma^r}
\newcommand{\Sell}[2]{\sigma^\ell_{#1#2}}
\newcommand{\Sellx}[3]{\sigma^\ell_{#1#2}(#3)}
\newcommand{\Gell}[2]{\gamma^\ell_{#1#2}}
\newcommand{\Gellx}[3]{\gamma^\ell_{#1#2}(#3)}
\newcommand{\Sr}[2]{\sigma^r_{#1#2}}
\newcommand{\Srx}[3]{\sigma^r_{#1#2}(#3)}
\newcommand{\Gr}[2]{\gamma^r_{#1#2}}
\newcommand{\Grx}[3]{\gamma^r_{#1#2}(#3)}
\newcommand{\betaij}{\beta_{ij}}
\newcommand{\leftr}{\bbslash}
\newcommand{\rightr}{\sslash}
\newcommand{\fil}{\mathbf{Fil}}
\newcommand{\dfil}{{\rm Fil}}
\newcommand{\betafil}{\mathbf{\beta Fil}}
\newcommand{\dbetafil}{{\rm \beta Fil}}
\newcommand{\arc}[1]{{\vv{arc}}(#1)}
\newcommand{\BlockG}{\vv{ArcG}}
\newcommand{\BG}{\vv{AG}}
\newcommand{\BlG}{\vv{BlockG}}
\newcommand{\ssBG}{\vv{sAG}}
\begin{document} 
\title[Blockwise Gluings And Amalgamation Failures in IRLs]{Blockwise Gluings And Amalgamation Failures\\ in Integral Residuated Lattices}

\author{Valeria Giustarini and Sara Ugolini}
\address{Artificial Intelligence Research Institute (IIIA), CSIC, Barcelona, Spain}
\email{valeria.giustarini@iiia.csic.es, sara@iiia.csic.es}

\begin{abstract}
We introduce the {\em blockwise gluing} construction. This describes residuated integral chains which can be decomposed into (possibly) partial algebras, stacked one on top of the other, and such that elements in a certain component multiply {\em in blocks}, i.e., in the same way, with respect to lower components. This construction generalizes that of 1-sums (or ordinal sums). As first main results, we provide finite axiomatizations for varieties generated by particular chains that are gluings of their archimedean components. For such varieties we also prove the finite embeddability property, and as a consequence, the decidability of their universal theory.

Moreover, we solve in the negative several longstanding open problems in the literature about the amalgamation property (AP). Indeed, we provide denumerably many new examples of varieties lacking the AP, including: semilinear (commutative) integral residuated lattices, semilinear FL$_w$-algebras, MTL-algebras, involutive and pseudocomplemented MTL-algebras, and all their $n$-potent subvarieties for $n\geq 2$. For the commutative varieties, this also entails that the associated substructural logics do not have the deductive interpolation property.
\end{abstract}
\maketitle

\section{Introduction}
Residuated lattice-ordered monoids, or residuated lattices for short, are algebraic structures in the signature $(\lor, \land, \cdot, \backslash, /, 1)$, where the monoidal reduct and the lattice order $\leq$ are connected by the {\em residuation law}:
$$x \cdot y \leq z \;\;\Leftrightarrow\;\; y \leq x \backslash z \;\;\Leftrightarrow\;\; z \leq z /y,$$
in which the two divisions $(\backslash, /)$ intuitively allow one to solve inequalities, in much of the same way as one would use inverses to solve equations in groups. First introduced by Ward and Dilworth to study ideals of rings \cite{WardDilworth}, residuated lattices have caught the interest of researchers in the field of algebraic logic. This is due in particular to the fact that they constitute a natural semantics for a wide framework of nonclassical logics, namely {\em substructural logics} \cite{GJKO,MPT2023}. These are a large container of systems which include many of the interesting examples of logics amenable to the algebraic study: classical logic, intuitionistic logic, intermediate logics, many-valued logics, relevance logics to name a few. In this context, the monoidal operation acts as a conjunction, the two divisions play the role of implications, and the residuation law then reads as an algebraic version of a deduction theorem.

Residuated lattices comprise a variety of different structures; in fact, they do not satisfy any special purely lattice-theoretic nor monoid-theoretic identity, since any lattice and any monoid can be embedded into some residuated lattice \cite{Bahlsetal,JipsenTsinakis}. 
This wide spectrum of structures makes the study of residuated lattices both interesting and complicated, and while the literature on the subject is quite vast and many deep results have been obtained in the last decades (see e.g. the monographs \cite{GJKO,MPT2023}), at the present moment large classes of algebras still lack an effective structural understanding. One of such classes is given by commutative integral chains, i.e., totally ordered structures where the
unit of the monoid is the top element of the lattice, and the monoidal operation is commutative. This class is of particular interest in the field of nonclassical logics, since they give semantics to many-valued logics such as \L ukasiewicz logic, G\"odel-Dummett logic, H\'ajek Basic Logic, and Esteva-Godo's monoidal t-norm based logic \cite{Hajek98,EstevaGodo2001}. 

This work aims to further our understanding of the structure of integral residuated lattices in general, and totally ordered ones in particular. A meaningful way of tackling this issue is to decompose these algebras in ``simpler'' components, which ideally are subalgebras or subreducts, and understand constructions that allow to put such components back together. 
This approach has been particularly successful in varieties generated by totally ordered structures satisfying {\em divisibility}, i.e., such that if $x \leq y$, there are $z, z'$ such that $x =  yz = z'y$. The results in \cite{AM2003}, later extended to the non-commutative setting in \cite{Dvurevcenskij}, use {\em 1-sums} (or {\em ordinal sums}) to yield a description of totally ordered divisible chains. These results have been succesfully utilized to study, for instance, free algebras \cite{AguzzoliBova}, subvariety lattices \cite{AM2003,AglianoUgolini22,AguzzoliBianchi}, and interesting algebraic properties such as amalgamation \cite{FussnerSantschi25}.

While 1-sums have shown to be effective in divisible structures, they do not give a useful description for all integral chains. Indeed, the class of integral chains that are not decomposable in 1-sums is quite large --in the $0$-bounded cases, all involutive structures belong to it \cite{Montagnaetal2006}-- and it is not closed under subalgebras \cite{GalatosUgolinimanuscript}, hence cannot be described by equations, quasiequations, nor universal sentences. It then becomes relevant to consider more general constructions, capable of effectively describing larger classes of chains. An example in this direction is in \cite{GalatosUgolini}; the authors first introduce a construction that {\em glues} together structures intersecting not just at $1$, like in 1-sums, but in a congruence filter $F$, and then obtain a construction involving partial algebras by removing said filter $F$. 

Building on these latter ideas, we introduce the {\em blockwise gluing}, with which we are able to describe integral chains whose components are made of possibly partial algebras, stacked one on top of the other and intersecting at $1$, and such that elements in a certain component multiply {\em in blocks}, i.e., in the same way, with respect to lower components. This construction generalizes that of 1-sum and the one introduced in \cite{GalatosUgolini}. As first main results, we show that the blockwise gluing can be effectively used to study varieties generated by chains which are gluings of their archimedean components. We provide finite axiomatizations for $n$-potent varieties of this kind. Moreover, using {\em residuated frames} \cite{GalatosJipsen2013} as a technical tool, we show that all the $n$-potent varieties we analyze have the finite embeddability property, as a result of the fact that the blockwise gluing construction is preserved by the residuated frames we make use of. This entails that all such varieties have decidable universal theories.

Finally, we show that, within the class of chains describable by means of blockwise gluings, one can construct a V-formation that does not have an amalgam in residuated chains. This solves in the negative several longstanding open problems in the literature about the amalgamation property (see the table in page 205 of \cite{MPT2023}). In particular we establish that the following lack amalgamation: the varieties of semilinear (commutative) integral residuated lattices, semilinear FL$_w$-algebras and MTL-algebras, involutive and pseudo-complemented MTL-algebras, and all their $n$-potent subvarieties for $n\geq 2$. For the commutative varieties, this also entails that the associated logics do not have the deductive interpolation property.

\section{Preliminaries}
In this preliminary section we introduce the main notions of interest. First, we recall the basic facts about residuated lattices which will be of use in this paper. Secondly, we recall in some detail the residuated frames constructions, which will be a key tool to prove some interesting properties of the varieties generated by blockwise gluings, and will also be instrumental in some axiomatization results.

\subsection{Residuated lattices}A residuated lattice is an algebra $\alg A = ( A,\lor,\land,\cdot,\back, /, 1)$ of type $(2, 2, 2, 2, 2, 0)$ such that:
\begin{enumerate}
\item $(A, \lor, \land )$ is a lattice;
\item $(A, \cdot, 1)$ is a monoid;
\item $\back$ and $/$ are the right and left division of $\cdot$: for all $x,y, z \in A$,
$$
x \cdot y \leq z \;\;\Leftrightarrow\;\; y \leq x \back z \;\;\Leftrightarrow\;\; x \leq z /y,
$$
where $\leq$ is  the lattice ordering.
\end{enumerate}
As usual, in what follows we write $xy$ for $x \cdot y$. 
Importantly, divisions in a residuated lattice can be characterized by means of the product and the order. Specifically, given a residuated lattice $\alg A$,
\begin{equation}
	x \backslash z = \max \{y \in A: xy \leq z\}; \quad z / y = \max\{x \in A: xy \leq z\}.
\end{equation}
A fact that will sometimes be used in this work is that a monoid with a complete lattice order is residuated if and only if multiplication distributes on both sides over arbitrary joins (\cite[Corollary 3.11]{GJKO}). For a finite totally ordered structure, this is equivalent to the product preserving the order and the bottom element being absorbing\footnote{We observe for the sake of the reader that this last condition is wrongly omitted in Corollaries 3.12 and 3.13 of \cite{GJKO}.} (since the bottom element is the empty join).

Residuated lattices form a variety, denoted by $\mathsf{RL}$, as residuation can be expressed equationally; see \cite{BlountTsinakis2003}. When the monoidal identity is the top element of the lattice we say that the residuated lattice is \emph{integral} or an \emph{IRL}; we call the corresponding variety $\mathsf{IRL}$.
Residuated lattices with an additional constant $0$ are called \emph{pointed}. \emph{$0$-bounded} residuated lattices are pointed residuated lattices that satisfy the identity $0 \leq x$. We call a residuated lattice {\em bounded} if it is integral and $0$-bounded.
 The variety of bounded residuated lattices is called $\mathsf{FL_w}$, referring to the fact that it is the equivalent algebraic semantics of the Full Lambek calculus with the structural rule of {\em weakening} (see \cite{GJKO,MPT2023}). 

A residuated lattice is called \emph{commutative} if the monoidal operation is commutative. In this case the two divisions coincide, and we write $x \to y$ for $x \back y = y / x$. We write $\mathsf{CRL}$ and $\mathsf{CIRL}$, respectively, for the commutative subvarieties of $\mathsf{RL}$ and $\mathsf{IRL}$, and refer to commutative $\mathsf{FL_{w}}$-algebras as $\mathsf{FL_{ew}}$-algebras, since commutativity of the monoidal operation corresponds to the structural rule of {\em exchange}. In $\vv{FL_{ew}}$-algebras we also write $\neg x$ for $x \to 0$. 

We call \emph{chain} a totally ordered structure; hence a \emph{residuated chain} is a residuated lattice which is totally ordered as a lattice. 
Given a class of residuated lattices $\vv K$, we denote with $\vv K_{\rm c}$ the class of structures in $\vv K$ that are chains:
\begin{equation}
	\vv K_{\rm c} = \{\alg A \in \vv K: \alg A \mbox{ is a chain}\}.
\end{equation}
We say that a residuated lattice is \emph{semilinear} if it is a subdirect product of chains. The variety of semilinear residuated lattices is called $\vv{SemRL}$ and it is axiomatized by (see for instance \cite{MPT2023}): 
\begin{equation}
	\tag{sem} [u\backslash((y \backslash x \land 1)u) \meet 1] \lor  [(v(x \backslash y \land 1))/ v \meet 1]=1,
\end{equation}
 or in the commutative case by \emph{prelinearity}:
\begin{equation}
	\tag{prel} (x \to y \land 1) \join (y \to x \land 1) = 1.
\end{equation} 
We call the variety of semilinear commutative residuated lattices $\vv{SemCRL}$ and the subvariety of those satisfying also integrality $\vv{SemCIRL}$.
In particular the subvariety of prelinear $\vv {FL_{ew}}$-algebras is called $\vv{MTL}$ since it is the equivalent algebraic semantics of the \emph{monoidal t-norm based logic}, MTL for short,  introduced by Esteva and Godo \cite{EstevaGodo2001}. 

A relevant subvariety of $\vv{MTL}$ is the variety of BL-algebras, the equivalent algebraic semantics of  H\'ajek's Basic Logic \cite{Hajek98}, which is axiomatized with respect to $\vv{MTL}$ by the divisibility identity: 
\begin{equation}
	\tag{div} x \meet y= x(x\backslash y)=(y/x)x.
\end{equation}
The previous equation is equivalent to the following condition in chains: given a chain $\alg A$, if $a \leq b$ there are $c,d \in A$ such that $a=cb=bd$. The variety of the $0$-free subreducts of BL-algebras is the variety of \emph{basic hoops} and it corresponds to divisible, semilinear CIRLs.
Among the most well known subvarieties of BL-algebras one finds \emph{MV-algebras} and {\em G\"odel algebras}, which are the equivalent algebraic semantics of, respectively, \L ukasiewicz infinite-valued logic and G\"odel-Dummett logic (which is the intermediate logic complete with respect to totally ordered Heyting algebras). 
MV-algebras are BL-algebras satisfying \emph{involution}:
\begin{equation*}\tag{inv}
    \neg \neg x=x; 
\end{equation*}
while G\"odel-algebras are BL-algebras satisfying \emph{idempotency} of the product:
\begin{equation*}\tag{id}
	x^2=x
\end{equation*}
Finite totally ordered MV-algebras and G\"odel algebras are fully determined by their cardinality.
  
With respect to their structure theory, residuated lattices are quite well-behaved. Indeed, they are {\em $1$-regular} and {\em ideal determined}, meaning that congruences are in one-one correspondence with subsets that we call {\em congruence filters}, which correspond to the equivalence classes of $1$.
In a residuated lattice $\alg A$, a congruence filter $F$ is a non-empty upset of $A$, closed under products (if $x, y \in F$, then $x y \in F$) and under conjugates, i.e. if $x \in F$ , then $yx/y, y\backslash xy \in F$ for every $y \in A$. We will denote by $\alg {Fil}(\alg A)$ the lattice of congruence filters of $\alg A$. 
 $\alg {Fil}(\alg A)$ is isomorphic to the congruence lattice of $\alg A$, $\alg{Con}(\alg A)$, via the maps:
$$F \mapsto \theta_{F} = \{(x, y) \in A \times A : x \back y, y \back x \in F\} = \{(x, y) \in A \times A: x / y, y / x \in F\},$$
$$\theta \mapsto F_{\theta} = \{x \in A:  (x, 1) \in \theta\}$$
for all $F \in {\rm Fil}(\alg A), \theta \in {\rm Con}(\alg A)$. In what follows, given an algebra $\alg A$ and a congruence filter $F$ of $\alg A$, we may write $\alg A/F$ in place of $\alg A/\theta_F$ for the quotient structure. Moreover, we will often denote elements in the quotient $\alg A/F$ as $[a]_F$, for $a \in A$; when there is no danger of confusion, we simply write $[a]$. 

 The following universal algebraic notion is going to be relevant for some results. An algebra $\alg A$ has the {\em congruence extension property} (for short, CEP) if, for any subalgebra $\alg B$ of $\alg A$ and $\Theta \in \mathsf{Con}(\alg B)$, there exists $\Phi \in  \mathsf{Con}(\alg A)$ such that $\Phi \cap B^2 = \Theta$. A class of algebras has the CEP if each of its members has the CEP. It is well known (see \cite{GJKO,MPT2023}) that all commutative subvarieties of residuated lattices or $\mathsf{FL}$-algebras have the CEP.

	Finally, given that {\em partial} IRLs play a role in this manuscript, let us give a precise definition. 
	We call a \emph{partial IRL} a partially ordered partial algebra $(\alg A, \leq)$ in the language of residuated lattices, such that:
	\begin{enumerate}
	\item $1 \in A$ and $\alg A$ is integral: $x \leq 1$ for all $x \in A$;
		\item the three axioms of RLs are satisfied whenever they can be applied, in the following sense: \begin{enumerate}
		\item $x \lor y$ is the least common upper bound of $x$ and $y$ whenever it exists, and similarly $x \land y$ is the largest lower bound whenever it exists;
		\item $x 1 = 1 x = 1$, and if $xy, (xy)z, yz, x(yz)$ are defined then $(xy)z = x(yz)$;
		\item if $xy$, $z / y$, and $x \back z$ are defined, then $xy \leq z$ if and only if $x \leq z / y$ if and only if $y \leq x \back z$.
		\end{enumerate}
		\item multiplication is order preserving when defined:  $a \leq b$ and $ac, bc$ defined implies $ac \leq bc$, and likewise for left multiplication.
		\item whenever defined, the division operations are order-preserving in the numerator and order-reversing in the denominator. That is:
		$x \leq y $ and $z \backslash x, z \backslash y$ defined implies $z \backslash x \leq z \backslash y$; $x \leq y $ and $ y \backslash z, x \backslash z$ defined implies $ y \backslash z \leq x \backslash z$; likewise for right division. 
	\end{enumerate}
	While the above definition is quite general, most partial residuated lattices we will consider in this work will only possibly lack some divisions and some joins.

\subsection{Residuated frames}
In order to establish key properties of some varieties under consideration, specifically, the finite model property (FMP) and the finite embeddability property (FEP), we will rely on the framework of residuated frames as developed in \cite{GalatosJipsen2013}.  We recall the main facts formulated as needed in this work. For more details, we refer the reader to \cite{GalatosJipsen2013}.

In essence, a residuated frame is a multi-sorted structure $(W,W',N, \circ, \leftr, \rightr)$ that abstracts key characteristics of a residuated lattice.
The ternary relations $\circ, \leftr, \rightr$ are analogous to the operations $(\cdot, \backslash, /)$, in so far that the former satisfy a residuation law with respect to the relation $N$, as the latter do with respect to the lattice order $\leq$. In a nutshell, residuated frames allow one to construct residuated structures with controlled properties by appropriately defining the above relations.

 \begin{definition}
 	A {\em residuated frame} is a structure of the form $\alg W =(W,W',N, \circ, \leftr, \rightr)$ such that $\circ \subseteq W^3$, $\leftr \subseteq W \times W' \times W'$, $\rightr \subseteq W' \times W \times W'$, and  $N\subseteq W \times W'$ is a {\em nuclear relation}, i.e.:
 		$$(u \circ v)N w \;\;\mbox{ iff }\;\; v N (u \leftr w) \;\;\mbox{ iff }\;\; u N (w \rightr v)$$
 		for all $u,v \in W$ and $w \in W'$, where $x * y= \{z : (x,y,z) \in *\}$ for $* \in \{\circ, \leftr,\rightr\}$.
 \end{definition}
 The relation $N$ is a Galois relation and it induces a Galois connection for $\mathcal{P}(W)$ and $\mathcal{P}(W')$, given by the maps $^{\triangleright}: \mathcal{P}(W) \to \mathcal{P}(W')$ and $^{\triangleleft}: \mathcal{P}(W') \to \mathcal{P}(W)$ defined as follows:
 $$
 X^{\triangleright}= \{ z \in W' : \forall x \in X, x N z \} \qquad Z^{\triangleleft}=\{ x \in W : \forall z \in Z, x N z\},
 $$for $X \subseteq W$ and $Z \subseteq W'$. Thus one has  $X \subseteq Z^\triangleleft$ if and only if $Z \subseteq X^\triangleright$. One can then consider the associated closure operator $\gamma_N(X)=X^{\triangleright \triangleleft}$ on the powerset $\mathcal{P}(W)$. Importantly, the subsets $\{\{u\}^\triangleleft : u \in W'\}$ are a basis for $\gamma_N$, that is to say, the elements in $\gamma_N[\mathcal{P}(W)]$ are exactly the intersections of elements in $\{\{u\}^\triangleleft : u \in W'\}$. Let 
 $$
 X \circ Y = \{x \circ y : x \in X, y \in Y\},
 $$
 then $\gamma_N$ is a nucleus on $(\mathcal{P}(W), \circ)$, i.e., a closure operator such that $\gamma_N(X)\circ \gamma_N(Y)\subseteq \gamma_N (X \circ Y)$. We are ready to define the {\em Galois algebra} of $\alg W$ as $\alg W^+=(\gamma_N[\mathcal{P}(W)], \cap, \cup_{\gamma_N}, \circ_{\gamma_N}, \backslash, /)$ where 
\begin{eqnarray*}
	& X\circ_{\gamma_N} Y = \gamma_N(X \circ Y), &X \cup_{\gamma_N} Y = \gamma_N(X \cup Y)\\
&  X \backslash Y = \{ z : \forall x \in X, x \circ z \in Y\}, &X / Y =\{ z :\forall x \in X, z \circ x \in Y \}.
\end{eqnarray*}
$\alg W^+$ is in general a residuated lattice-ordered groupoid. In order to get a residuated lattice, we need to add further properties.

A {\em unital residuated frame} (ru-frame for short) $\alg W=(W, W', N, \circ, \leftr, \rightr, E)$ is a residuated frame with a set $E \subseteq W$ such that $(x \circ E)^\triangleright= \{ x \}^\triangleright=(E \circ x)^\triangleright$ for all $x \in W$. A {\em ruz-frame} is a unital frame with an extra subset $D\subseteq W$. In both cases, $\alg W^+$ has a unit $\gamma_N(E)$ and in the latter case $\gamma_N(D)$ is the interpretation of the constant $0$. Moreover, a residuated frame is {\em associative} if for all $x,y,z \in W$, $[(x \circ y)\circ z]^\triangleright=[x \circ (y \circ z)]^\triangleright$. The Galois algebra of an associative ru-frame is a residuated lattice, while the one of an associative ruz-frame is an FL-algebra. We also observe that in what follows, $\circ$ will always be a ternary relation obtained by a binary monoidal operation. 

A typical application of residuated frames is to prove generation of a variety or quasivariety by its finite algebras. We say that a class of algebras $\vv K$ has the {\em finite model property} (FMP) if any equation that fails in $\vv K$, fails in a finite member of $\vv K$. For a variety, this means being generated by its finite algebras. Moreover, we say that $\vv K$ has the {\em finite embeddability property} (FEP) when for any given finite partial subalgebra $\alg B$ of an algebra $\alg A \in \vv K$, there exists a finite algebra $\alg D \in \vv K$ into which $\alg B$ can be embedded. The FEP clearly implies the FMP, and for a variety, it coincides with being generated as a quasivariety by its finite members. We recall how residuated frames are used towards FMP and FEP, since we will use it in what follows.
 
 Let $\alg A$ be a residuated lattice and $\alg B$ a partial subalgebra of $\alg A$. Then $(W, \circ, 1)$ is taken to be the submonoid of $\alg A$ generated by $\alg B$. Moreover, define a map $u$ on $(W, \circ, 1)$ as $u(x)=v \circ x \circ w$ for $v, w \in W$; we call such map a {\em section}, and we denote by $\alg S_W$ the set of all sections. Then set $W'=\alg S_W \times \alg B$. Finally, define the binary relation $N$ as $$x N (u,b) \mbox{ if and only if } u(x)\leq_{\alg A} b.$$ $N$ is nuclear with respect to $\leftr,\rightr$ defined as follows: 
 $$x \leftr (u,b)= \{(u(x \circ -), b)\} \;\;\mbox{ and }\;\; (u,b) \rightr y= \{(u(- \circ y), b)\},$$
 where $u(x \circ -): z \mapsto u(x \circ z)$ and $u(- \circ y): z \mapsto u(z \circ y)$.
Consider $\alg W_{\alg A, \alg B}=(W,W',N, \circ, \leftr, \rightr, \{1\})$, one has that: 
 \begin{proposition}[{\cite{GalatosJipsen2013}}]\label{prop:galatosjipsen}
$\alg W_{\alg A, \alg B}$ is a residuated frame and the map $b \mapsto \{(id, b)\}^\triangleleft$ embeds $\alg B$ into $\alg W_{\alg A, \alg B}^+$. 
\end{proposition}
Thus, residuated frames can be used to construct a residuated lattice into which the initial partial subalgebra embeds. This is often used to prove that a certain variety or quasivariety is generated by its finite members,  by showing that any (quasi)identity that fails in the (quasi)variety fails in a finite algebra. To this end, one needs to show that $\alg W_{\alg A, \alg B}^+$ ends up in the desired class of algebras. This step may require an ad hoc proof, since not all properties of the initial residuated lattice need be preserved by the construction. The following is a useful observation. Note that we assumed the constant $1$ to belong to all partial IRLs, so to all partial subalgebras in this context.
 
\begin{lemma}\label{lemma: finite chain}
	Let $\alg A$ be an integral residuated chain and let $\alg B$ be a finite partial subalgebra of $\alg A$; then $\alg {W}_{\alg A, \alg B}^+$ is a finite integral  residuated chain.
\end{lemma}
\begin{proof}
The fact that $\alg W_{\alg A, \alg B}^+$ is finite is shown in the proof of \cite[Theorem 3.18]{GalatosJipsen2013}, and integrality is easily seen to hold given that we consider $1 \in B$. Let us prove that $\alg W_{\alg A, \alg B}^+$ is a chain. Recall that $\{\{(u,b)\}^\triangleleft : (u,b) \in W' \}$ forms a basis for $\alg W_{\alg A, \alg B}^+$. Moreover, by direct application of the definitions, 
$$
\{(u,b)\}^\triangleleft= \{x \in W : x N (u,b) \}=\{x \in W : u(x) \leq_{\alg A} b \}.
$$
Notice then that $\{(u,b)\}^\triangleleft$ is a downset of $W$, which inherits the order of the chain $\alg A$. Hence, the downsets in the basis and their intersections are totally ordered by inclusion, which implies that $\alg W_{\alg A, \alg B}^+$ is a chain. 
\end{proof}

\section{Blockwise gluings}
The main results of this work aim to further our understanding of the structure of integral residuated lattices in general, and chains in particular. A meaningful way of tackling this issue is to decompose these algebras in ``simpler'' components; particularly, by seeing an IRL $\alg A$ as the union of two (or more) components, which ideally are its subalgebras or subreducts. Given that we are particularly interested in totally ordered structures, we shall consider constructions that assemble these components in a totally ordered fashion, effectively stacking one on top of another.

Say that a residuated lattice $\alg A$ can be seen as the union of two substructures $\alg B$ and $\alg C$, one below the other, where we assume that $B \cap C = \{1\}$ so that they share the constant. It has been shown in \cite[Proposition 2.1]{GalatosUgolini} that, if one wants both $\alg B$ and $\alg C$ to be subalgebras of $\alg A$, $\alg A$ is necessarily the {\em 1-sum} (often called {\em ordinal sum} in residuated structures\footnote{We choose the naming 1-sum in accordance with \cite{Olson1,Olson2}, to avoid confusion with the ordinal sum of posets, where no elements are glued together.}) of $\alg B$ and $\alg C$. That is to say, the products between elements of $\alg B$ and $\alg C$ coincide with their meet. Therefore, in order to be able to combine the components in different ways, we must give up the requirement that the components be subalgebras, and instead allow them to be subreducts. A natural choice is to retain the product and order from the components, while allowing to omit closure under divisions, since the latter are entirely determined by the other operations.

 Let us then consider an IRL $\alg A$, seen as the union of two substructures $\alg B$ and $\alg C$, one below the other, where we assume that: $B \cap C = \{1\}$, and both subreducts are closed under products, meets, and joins (except in the case where $1$ is not join-irreducible in $\alg B$). Then it is clear that $\alg C$ is actually a subalgebra (hence an IRL), while $\alg B$ is a possibly partial IRL, where some divisions and joins might be missing, as they end up in $\alg C$. See Figure \ref{figure:gluing} for a pictorial intuition.
 
\begin{figure}
\begin{center}
\begin{tikzpicture}
\footnotesize{
 \draw  [dashed](0,0) ellipse (0.5 and 0.7);
 \draw  (0,1.6) ellipse (0.5 and 0.7);
  \fill (0,2.3) circle (0.05);
  \node at (0,1.6) {$\alg C$}; 
  \node at (0,0) {$\alg B$}; 
   \node at (0.15,2.5) {$1$}; 
  \node at (0,-1.3) {$\alg A$};
   }
 \end{tikzpicture}\caption{}\label{figure:gluing}
   \end{center}  
\end{figure}
 
In such a scenario, products between elements of $\alg B$ and $\alg C$ are not constrained to be the meet, unlike the case of the 1-sum, and in fact seem to have quite a high level of freedom. To help the intuition, let us provide non-isomorphic algebras $\alg A$ and $\alg A'$ that have the same subcomponents $\alg B$ and $\alg C$, as it is shown in the next example.
\begin{example}
	Consider the following two chains:
	\begin{center}
\begin{tikzpicture}
\draw (0,3) -- (0,1);
 \fill (0,3) circle (0.05);
 \node at (0.15,3.1) {$1$};
 \fill (0,2.5) circle (0.05);
 \node at (0.67,2.55) {$x = x^2$};
 \fill (0,2) circle (0.05);
 \node at (0.67,2.05) {$y = y^2$};
 \fill (0,1.5) circle (0.05);
 \node at (0.25,1.55) {$z$};
  \fill (0,1) circle (0.05);
  \node at (1.15,1.05) {$z^2 = xz = yz$};
  \node at (0.1,0.2) {$\alg A$};
 
 \draw (4,3) -- (4,1); 
 \fill (4,3) circle (0.05);
 \node at (4.15,3.1) {$1$}; 
 \fill (4,2.5) circle (0.05);
 \node at (4.67,2.55) {$x = x^2$};
 \fill (4,2) circle (0.05);
 \node at (4.67,2.05) {$y = y^2$};
 \fill (4,1.5) circle (0.05);
 \node at (4.67,1.55) {$z = xz$};
  \fill (4,1) circle (0.05);
  \node at (4.75,1.05) {$z^2 = yz$};
  \node at (4.1,0.2) {$\alg A'$};
   \end{tikzpicture}\end{center}
   It is easy to see that the above operations identify uniquely two IRL chains, $\alg A$ and $\alg A'$.
 In this example, $\alg B$ is the partial IRL with domain $\{1, z, z^2\}$ and the operations restricted from the ones of $\alg A$ (or, equivalently, $\alg A'$), while $\alg C$ is the subalgebra of $\alg A$ (or, again equivalently, $\alg A'$) with domain $\{1, x, y\}$. Note that the products between $\alg B$ and $\alg C$ differ in $\alg A$ and $\alg A'$, indeed: $x \cdot_{\alg A} z = z^2$ while $x \cdot_{\alg A'} z = z$. Notice also that neither $\alg A$ nor $\alg A'$ are the 1-sum of $\alg B$ and $\alg C$.
\end{example}
We observe that every IRL chain can be seen as a union of subreducts closed under products and the lattice operations (it suffices to take as components the different archimedean classes). Therefore, if one were able to effectively describe all possible ways of redefining products between such components, this would yield a characterization of all IRL-chains. At the present moment, we are far from understanding this level of generality in a meaningful way. Nonetheless, in this work we take a step towards this direction. Specifically, we will consider the particular case in which elements belonging to a component multiply {\em in blocks} with respect to lower components. Looking at Figure \ref{figure:gluing} again, we mean that all elements of $\alg C$ multiply in the same way with respect to each element of $\alg B$: for all $c, c' \in C$ and $b \in B$, $cb = c'b$ and $bc = bc'$. 

An example of such a construction is of course the 1-sum, and more generally, the {\em partial gluing} construction introduced in \cite{GalatosUgolini}, which will be the starting point of our investigation. We recall it here in some detail. 
 The key ingredient for the partial gluing construction is given by the partial algebra acting as the lower component in the gluing, called in \cite{GalatosUgolini} a
  \emph{lower-compatible triple}. This is a triple $(\alg K, \sigma, \gamma)$, where, intuitively, $\sigma$ stores the information for the product with respect to the upper component, while $\gamma$ stores the information for the divisions. More precisely, a lower-compatible triple $(\alg K, \sigma, \gamma)$ is such that:
 \begin{enumerate}
\item  $\alg K$ is a partial IRL with all operations defined, except for $x \back y$ and $y /x$, which are undefined if and only if $\sigma(x) \leq y$ and $x \not\leq y$.\item\label{property2UPT}  $(\sigma, \gamma)$  is a residuated pair, i.e. $\sigma(x) \leq y$ if and only if $x \leq \gamma(y)$, such that:\begin{enumerate}
\item $\sigma$ is a \emph{strong conucleus}, i.e, an interior operator such that for $x, y \neq 1$, $x \sigma(y) = \sigma(xy) = \sigma(x) y$, and $\sigma(1) = 1$.
\item $\gamma$ is a closure operator on $\alg K$, and
\item $x y, y  x \leq \sigma(x)$ for all $x, y \in K, y \neq 1$.
\end{enumerate}
\end{enumerate}
The partial gluing then considers a a lower compatible triple $(\alg K, \sigma, \gamma)$, and an IRL $\alg L$ with a splitting coatom $c_L$ (i.e., $L=\{1\} \,\cup\! \downarrow\! c_L$), such that: $K \cap L = \{1\}$, and if $x \lor y = 1$ in $K$ for some $x,y \in K-\{1\}$, then $L$ has a bottom element $0_{L}$.
Set $\pi = (\sigma, \gamma)$, then the partial gluing $\alg K \oplus_{\pi}  \alg L$ is the structure whose operations extend those of $\alg K$ and $\alg L$, except if $x \lor y = 1$ in $K$ then we redefine $x \lor y = 0_{L}$, and moreover:
\begin{align*}
x y &=\left\{\begin{array}{ll}
\sigma(x) & \mbox{ if } y \in L - \{1\}, x \in K - \{1\}\\
\sigma(y) & \mbox{ if } x \in L - \{1\}, y \in K - \{1\}\end{array}\right.\\
x\back\, y &=\left\{\begin{array}{ll} 
c_{L} & \mbox{ if } x, y \in  K \mbox { and $x \back^{\alg K} y$ is undefined}\\ 
\gamma(y) & \mbox{ if } x \in L - \{1\}, y \in K - \{1\} \\
1 & \mbox{ if } x \in K - \{1\}, y \in L\\ \end{array}\right.\\
y/\, x &=\left\{\begin{array}{ll} 
c_{L} & \mbox{ if } x, y \in  K \mbox{ and $y/^{\alg K} x$ is undefined}\\
\gamma(y) & \mbox{ if } x \in L - \{1\}, y \in K - \{1\} \\
1 & \mbox{ if } x \in K - \{1\}, y \in L\\\end{array}\right.\\
x \land y &=\begin{array}{ll}
x & \mbox{ if } x \in K - \{1\}, y \in L\end{array}.\\
x \lor y &=\begin{array}{ll}
y & \mbox{ if } x \in K -\{1\}, y \in L\end{array}.
\end{align*}
Then 
$\alg K \oplus_{\pi}  \alg L$ is an IRL \cite[Proposition 2.9]{GalatosUgolini}. Note that by construction $\alg L$ is a subalgebra of $\alg K \oplus_{\pi}  \alg L$, and, if $1$ is join irreducible in $\alg K$, $\alg K$ is a subreduct with respect to all operations except the divisions.  
The 1-sum construction corresponds to the particular case where the lower compatible triple is a total IRL, and $\sigma$ and $\gamma$ are the identity maps.

The intuition behind this construction in \cite{GalatosUgolini} comes from the idea of gluing together two IRLs, say $\alg B$ and $\alg C$, which intersect not only at the constant $1$, but at a shared congruence filter (say $B \cap C = F$); then one ``forgets'' the filter, and what is left are the components of the partial gluing. Here we start from the partial gluing itself, and consider a more general version of it, where left and right products from $\alg C$ to $\alg B$ are allowed to differ. We then show that this construction allows us to describe exactly the chains where components multiply in blocks, in the sense described above. Later on, we will take these ideas one step further and introduce an iterated version of the construction which allows one to describe chains which are decomposable in an arbitrary (totally ordered) set of components.

Let us now be more precise. We start by defining the algebras we wish to characterize, which we will say are {\em blockwise gluings} of their components.

\begin{definition}\label{def: block IRL}
	Let $\alg A, \alg C$ be IRLs, and $\alg B$ a (possibly) partial IRL. We say that $\alg A$ is a {\em blockwise gluing of $\alg B$ and $\alg C$} if:
	\begin{enumerate}
		\item $A =B \cup C$ and $B \cap C = \{1\}$;
		\item the elements of $B-\{1\}$ are strictly below the elements of $C$;
		\item $\alg C$ is a subalgebra of $\alg A$;
		\item given $x, y \in C -\{1\}$ and $z \in B-\{1\}$, $xz = yz$ and $zx = zy$.
	\end{enumerate}

\end{definition}
We will see how to construct blockwise gluings by generalizing the partial gluing construction. Given a residuated structure $\alg A$, we call an element $c \in A$ a {\em splitting coatom} of $\alg A$ if for all $x \in A$, $x \neq 1$, we have that $x \leq c$.
\begin{lemma} \label{lemma: propblockwise gluing}
	If $\alg A$ is a blockwise gluing of $\alg B$ and $\alg C$, then:
	\begin{enumerate}
		\item $\alg B$, with the inherited operations from $\alg A$, is a partial IRL where all operations are defined except:\begin{enumerate} 
		\item $x \backslash y$ iff there is $z \in C$ such that $xz \leq y$ iff $x \backslash y$ is a splitting coatom of $\alg C$;  
		\item $y /x$ iff there is $z \in C$ such that $zx \leq y$ iff $y / x $ is a splitting coatom of $\alg C$; 
		\item $x \lor y$ iff $x \lor y \in C$, and then $\alg C$ has a bottom element $\perp _{\alg C}$ such that $x \join y = \perp_{\alg C}$.
		\end{enumerate}
		\item let $b \in B-\{1\}$ and $c,d \in C$, then $c \backslash b = d \backslash b$ and $b / c = b / d$.
	\end{enumerate}
\end{lemma}
	\begin{proof}
		(1) Notice that $\alg B$ is a partial IRL that contains $1$ and is closed under products and meets. Let then $x, y \in B$, then $x \backslash y$ and $y/x$ can lie either in $B$ or $C$. In particular, $x \backslash y \in C$ if and only if $\max \{k \in A : xk \leq y\}$ is the splitting coatom of $\alg C$, since any element of $C$ multiplies the same with respect to $\alg B$ by Definition \ref{def: block IRL}(4). This happens if and only if there exists $z \in C$ such that $xz \leq y$, which proves (1a). With a similar argument one can show (1b).
			For (1c), in $\alg B$ the only joins that  are missing lie in $\alg C$. Let then $x,y \in B-\{ 1\}$, if $x \join y \in \alg C$, necessarily $\alg C$ has a bottom element $\perp_\alg{C}$ and $x \join y = \perp_{\alg C}$. Indeed, all elements of $\alg C$ are strictly above $x$ and $y$, and $x \join y$ is the smallest element with this property.
		
       (2) Let $b \in B - \{1\}$ and $c,d \in C$, then $$c \backslash b= \max\{k \in A : ck \leq b\} = \max \{ k \in B : ck \leq b \}=max\{k \in B : dk \leq b\}=d \backslash b,$$ where the second equality follows from the fact that $\alg C$ is closed under products and the elements of $\alg C$ are above the ones in $B - \{1 \}$, while the third one follows from Definition \ref{def: block IRL} (4), since $k \in B $ and $c,d \in C$. Similarly one can show that also $b/c=b/d$. This concludes the proof.
   \end{proof}
   \begin{remark}\label{rem:div}
   	 We further observe that, if $\alg A$ is a blockwise gluing of $\alg B$ and $\alg C$, $\alg B$ can be seen as a partial IRL with only some division missing. Indeed, in the partial order inherited from $\alg A$, the joins of elements of $\alg B$ that end up in $C$ within $\alg A$, exist and are set to $1$ once restricting to $B$.
   \end{remark}
    
	The lemma above shows a close similarity with algebras that are obtainable by partial gluings. However, in the blockwise gluing, left and right products between elements of the top component with elements of the lower component may differ. This observation inspires the following definition, which aims at characterizing the blockwise gluing abstractly. We will need two distinct operators, $\sell$ and $\sr$, in order to store the information for left and right products respectively; accordingly, we shall also need two operators for the divisions, $\gell$ and $\gr$.
	
	\begin{definition} \label{def: lower block}
		We call {\em weak lower block} a tuple $(\alg B, \sell, \sr, \gell, \gr)$ such that: 
		\begin{enumerate}
			\item $\alg B$ is a partial IRL where all operations are defined except possibly for some divisions; 
			\item $(\sell, \gell)$ and $(\sr,\gr)$ are residuated pairs;
			\item  $\sell$ and $\sr$ are interior operators such that for $x, y \neq 1$:
				\begin{enumerate}
				\item $x\sr(y)=\sr(xy)$ and $\sell(x)y=\sell(xy)$;
				\item $x\sell(y)=\sr(x)y$;
				\item $\sr(\sell(x))=\sell(\sr(x))$;
				\item $\sell(1)=\sr(1)=1$;
				\end{enumerate}
			
				\item $\gell$ and $\gr$ are closure operators; 
				\item $xy \leq \sr(x), \sell(y)$ and $yx\leq \sr(y), \sell(x)$ for all $x,y \in B$, $x,y \neq 1$.
		\end{enumerate}
		We call {\em lower block} a weak lower block such that if $x \backslash y$ is undefined then necessarily $\sr(x)\leq y$ and $x \not \leq y$ and if $y / x$ is undefined then necessarily $\sell(x)\leq y$ and $x \not \leq y$.
	\end{definition}
	Note that both $\sell$ and $\sr$ are conuclei, i.e. $\sell(x)\sell(y)\leq \sell(xy)$ and $\sr(x)\sr(y)\leq \sr(xy)$. Indeed $\sell(xy)=\sell(x)y \geq \sell(x)\sell(y)$ since $y \geq \sell(y)$ and $\sr(xy)=x \sr(y)\geq \sr(x)\sr(y)$ since $x \geq \sr(x)$. 

	Let $\alg A $ be a blockwise gluing of $\alg B$ and $\alg C$, and fix an element $c \in C - \{1\}$; we define
	$$
	\sell_{\alg A}(x) = cx,\;\; \sr_{\alg A}(x) = xc,\;\; \gell_{\alg A}(x)= c \backslash x,\;\;\gr_{\alg A}(x) = x / c
	$$
	 for $x \in B-\{1\}$, and $\sell(1) = \sr(1) = \gr(1) = \gell(1) = 1$.
	\begin{proposition}\label{prop: lower blocks with c}
		 Let $\alg A$ be a blockwise gluing of $\alg B$ and $\alg C$. Then $(\alg B, \sell_{\alg A}, \sr_{\alg A}, \gell_{\alg A}, \gr_{\alg A})$ is a lower block.
	\end{proposition}
	\begin{proof} 
	We proceed to prove properties (1)--(5) of Definition \ref{def: lower block}. For simplicity, within this proof we write $\sell, \sr,\gell, \gr$ for $\sell_{\alg A}, \sr_{\alg A}, \gell_{\alg A}, \gr_{\alg A}$.

		(1) The fact that $\alg B$ is a partial IRL where all operations are defined except for some divisions follows from Lemma \ref{lemma: propblockwise gluing} and Remark \ref{rem:div}. 		
		
		(2) Let us consider the pair $(\sell, \gell)$; one can easily show that it is a residuated pair. Indeed, if $\sell(x)=cx\leq y$, then, by residuation in $\alg A$, $x \leq c \backslash y=\gell(y)$. Similarly, also $(\sr, \gr)$ is a residuated pair. 
		
		(3) We now verify that $\sell$ and $\sr$ are interior operators satisfying the properties (a)-(d) in Definition \ref{def: lower block}(3). First, notice that $\sell$ and $\sr$ are decreasing maps. Indeed, since $\alg A$ is integral, $\sell(x)=cx\leq x$ and $\sr(x)=xc\leq x$. To prove that they are idempotent, note that $c$ and $c^2$ multiply the same with respect to the elements in $\alg B$, by the definition of a blockwise gluing, and then we have that $$\sell(\sell(x))=c^2x=cx=\sell(x) \;\mbox{ and }\; \sr(\sr(x))=xc^2=xc=\sr(x).$$ Let us now show order preservation. Suppose $x \leq y$, then, by order preservation of the product in $\alg A$, $\sell(x)=cx \leq cy =\sell(y)$ and $\sr(x)=xc \leq yc=\sr(y)$; hence $\sell$ and $\sr$ are interior operators. 
		Properties (a)-(c) follow from the associativity of the product in $\alg A$. Finally, (d), i.e. the fact that $\sr(1)=\sell(1)$, holds by definition. 	
			
		(4) We now want to show that both $\gell$ and $\gr$ are closure operators. First note that, as a consequence of integrality, $cx \leq x$ and $xc \leq x$, hence by residuation $x \leq c\backslash x =\gell(x)$ and $x \leq x/c=\gr(x)$. By order preservation, if $x \leq y$, then $\gell(x)=c \backslash x \leq c \backslash y=\gell(y)$ and $\gr(x)=x/c \leq y/c =\gr(y)$. Moreover, by (2) of Lemma \ref{lemma: propblockwise gluing}, we get that $\gell(\gell(x))=c \backslash (c \backslash x)=c^2 \backslash x= c \backslash x= \gell(x)$ and $\gr(\gr(x))=(x /c)/c=x /c^2=x/c=\gr(x)$. 
		
(5) The fact that, for all $x,y \in B- \{1\}$, $xy \leq \sr(x), \sell(y)$ and $yx\leq \sr(y), \sell(x)$, follows by the order preservation of the product in $\alg A$. Indeed, $xy \leq xc=\sr(x)$, $xy \leq cy=\sell(y)$, $yx\leq cx=\sell(x)$ and $yx \leq yc=\sr(y)$. 

Hence $(\alg B, \sell, \sr, \gell, \gr)$ is a weak lower block. It is left to show that it is a lower block. Suppose that $x \backslash y$ is undefined in $\alg B$, then $x \backslash y \in C - \{1\}$. Now, $\sr(x) = x(x\backslash y) \leq y$, and also $x \not \leq y$ since otherwise $x \backslash y$ would be defined in $\alg B$ and equal to $1$.  Similarly one can prove that if $y / x$ is undefined then $\sell(x)\leq y$ and $x \not \leq y$. This concludes the proof.
  \end{proof}
	We now demonstrate how one can reconstruct a blockwise gluing starting from its abstract components. 
	
	\begin{definition}
		Let $(\alg B, \sell, \sr, \gell, \gr)$ be a lower block, and $\alg C$ be an IRL such that $B \cap C = \{1\}$. We assume that $\alg C$ has a splitting coatom $c_{\alg C}$ in case there are $x,y \in B - \{1\}$ such that $x \not\leq y$, and $\sell(x) \leq y$ or $\sr(x) \leq y$. Moreover, if $x \lor y = 1$ in $\alg B$ for some $x, y \in B - \{1\}$, then $\alg C$ has a bottom element $\perp_{\alg C}$. 
		Let $\beta = (\sell, \sr, \gell, \gr)$, we define $\alg B \boxplus_\beta \alg C$ to be the structure having domain $	B \cup C$ and with the operations
		defined as follows:
		\begin{align*}
x y &=\left\{\begin{array}{ll}
x \cdot^{\alg B} y &\mbox{ if } x, y \in B\\
x \cdot^{\alg C} y &\mbox{ if } x, y \in C\\
\sell (y) & \mbox{ if } y \in B - \{1\}, x \in C - \{1\}\\
\sr(x) & \mbox{ if } x \in B - \{1\}, y \in C - \{1\}\end{array}\right.\\
x\back\, y &=\left\{\begin{array}{ll} 
x \back^{\alg C} y  &\mbox{ if } x, y \in C\\
x \back^{\alg B} y &\mbox{ if } x, y \in B\mbox{ and } \sr(x)\not \leq y \mbox { or } x \leq y\\
c_{\alg C} & \mbox{ if } x, y \in  B-\{1\} \mbox { and } \sr(x)\leq y \mbox { and } x \not \leq y\\ 
\gell(y) & \mbox{ if } x \in C - \{1\}, y \in B - \{1\} \\
1 & \mbox{ if } x \in B - \{1\}, y \in C\\ \end{array}\right.\\
y/\, x &=\left\{\begin{array}{ll} 
y/\,^{\alg C} x  &\mbox{ if } x, y \in C\\
y/\,^{\alg B} x &\mbox{ if } x, y \in B\mbox{ and } \sell(x)\not \leq y \mbox{ or } x \leq y\\
c_{\alg C}& \mbox{ if } x, y \in  B -\{1\} \mbox{ and } \sell(x) \leq y \mbox{ and } x \not \leq y\\
\gr(y) & \mbox{ if } x \in C - \{1\}, y \in B - \{1\} \\
1 & \mbox{ if } x \in B - \{1\}, y \in C.\\\end{array}\right.\\
x \land y &=\left \{\begin{array}{ll}
x \land^\alg B y & \mbox{ if } x, y \in B\\
x \land^\alg C y &\mbox{ if } x, y \in C\\
x & \mbox{ if } x \in B - \{1\}, y \in C\\
y & \mbox{ if } y \in B - \{1\}, x \in C\end{array}\right .\\
x \lor y &=\left \{\begin{array}{ll}
x \join^\alg B y &\mbox{ if } x, y \in B  \mbox{ and $x \join y$ exists in } B\\
\perp_{\alg C} &\mbox{ if }   x, y \in B-\{1\} \mbox{ and $x \join y = 1$ in } B\\
x \join^\alg C y &\mbox{ if } x, y \in C\\
x & \mbox{ if } y \in B -\{1\}, x \in C\\
y & \mbox{ if } x \in B -\{1\}, y \in C\end{array} \right.
\end{align*}
We call $\alg B \boxplus_\beta \alg C$ the {\em blockwise gluing of $\alg B$ and $\alg C$ via $\beta = (\sell, \sr, \gell, \gr)$}.
	\end{definition}
	\begin{proposition}\label{prop: proving to be IRL}
		$\alg B \boxplus_\beta \alg C$ is an IRL .
	\end{proposition}
	\begin{proof}
		It is straightforward to check that $(\alg B \boxplus_\beta \alg C, \lor, \land, 1)$ is a lattice with top element $1$. Consider $(B \cup C, \cdot, 1)$, we show that it is a monoid. In particular we prove that the product is associative, i.e. for all $x, y, z$ $x(yz)=x(yz)$. First note that if $x,y,z$ all belong  to the same component, then associativity holds since $\alg B$ is a partial IRL with defined products, and $\alg C$ is an IRL. Thus, we need to consider the cases in which one of them belongs to a different component.
		\begin{enumerate}
			\item If $x,y \in B$ and $z \in C$, then:  $(xy)z=\sr(xy)=x\sr(y)=x(yz)$ where the second equality holds by Definition \ref{def: lower block} (3a). 
				
			\item If $x,y \in C$ and $z \in B$, then: 
			$(xy)z=\sell(z)=\sell(\sell(z))=x(yz)$, where the second equality holds since $\sell$ is an interior operator by Definition \ref{def: lower block} (3). 				
			\item If $x,z \in B$ and $y \in C$, then: 
			 $(xy)z=\sr(x)z=x\sell(z)=x(yz)$, where the second equality holds by Definition \ref{def: lower block} (3b). 
				
			\item If $x,z \in C$ and $y \in B$, then: 
			 $(xy)z=\sell(y)z=\sr(\sell(y))=\sell(\sr(y))=x(yz)$,where the third equality holds by Definition \ref{def: lower block}(3c); 
				
				\item If $x \in B$ and $y,z \in C$, then: 
				 $(xy)z=\sr(\sr(x))=\sr(x)=x(yz)$, where the second equality holds since $\sr$ is an interior operator by Definition \ref{def: lower block} (3); 
					
				\item If $x \in C$ and $y,z \in B$, then: 
				 $(xy)z=\sell(y)z=\sell(yz)=x(yz)$,where the second equality holds by Definition \ref{def: lower block} (3a); 
			\end{enumerate}
	It is left to show that the residuation law holds, i.e., for all $x,y,z$ in the domain: $$xy \leq z \mbox{ if and only if } y \leq x \backslash z \mbox{ if and only if }x \leq z/y.$$ 
	Notice that if at least one of the elements is $1$, residuation is easily seen to hold. We then need to consider the following different cases, in which we assume $x, y, z$ all different from $1$. 
	\begin{enumerate}
	\item If $x,y,z \in C$, then the residuation law holds since $\alg C$ is an IRL.
		\item Let $x,y,z \in B$, we verify that $xy \leq z$ if and only if $y \leq x \backslash z$, the proof for the remaining equivalence being similar. If $\sr(x) \not\leq z$ or $x \leq z$, then $x \backslash z$ is defined in $\alg B$ by the definition of a lower block; hence the equivalence holds since $\alg B$ is a partial IRL. If instead $\sr(x) \leq z$ and $x \not\leq z$,	 then $y \leq x \backslash z=c_{\alg C}$ and $xy \leq \sr(x)\leq z$, which follows from the Definition \ref{def: lower block} (5) of a lower block.
		\item Let $x,y \in B$ and $z \in C$. Then $xy \in B$, hence $xy \leq z$, $y \leq x \backslash z = 1$, and $x \leq z/y = 1$. Hence the three inequalities all hold. The same happens if $x,z \in C$ and $y \in B$ and if $x \in B$ and $y,z \in C$.
		\item Let $x,y \in C$ and $z \in B$. Then $xy \in C$ while $x \backslash z, z / y \in B$, hence $xy \not\leq z$, $y \not\leq x \backslash z$, and $x \not\leq z/y$. Thus, the three inequalities are all false. 
		\item Let $x,z \in B$ and $y \in C$. Let us start by the first equivalence. Suppose that $xy=\sr(x)\leq z$, then either $x \not\leq z$ and $x \backslash z= c_{\alg C}\geq y$, or $x \leq z$ and then $x \backslash z=1 \geq y$. Conversely, suppose that $y \leq x\backslash z$, then $x\backslash z$ is not in $B-\{1\}$, hence either $xy = \sr(x)\leq z$ or $xy = \sr(x) \leq x \leq z$.
	
		For the other equivalence, notice that since $(\sr, \gr)$ is a residuated pair, $xy = \sr(x)\leq z$ iff $x \leq \gr(z)=z/y$.  
		\item Let $x \in C$ and $y,z \in B$. Then residuation can be proven similarly to the previous step, taking into account that $(\sell, \gell)$ forms a residuated pair, and the definition of the divisions in $\alg B \boxplus_\beta \alg C$.
     \end{enumerate}		
     
	We can conclude that $\alg B \boxplus_\beta \alg C$ is an IRL. 
	\end{proof}
	Recall that given a blockwise gluing $\alg A$ of $\alg B$ and $\alg C$, we defined, fixing any $c \in C - \{1\}$, $\sell_{\alg A}(x) = cx,$ $\sr_{\alg A}(x) = xc$, $\gell_{\alg A}(x)= c \backslash x$, $\gr_{\alg A}(x) = x / c$.	
	\begin{theorem}\label{thm:blockwise}
		If $\alg A$ is a blockwise gluing of $\alg B$ and $\alg C$, then $\alg A = \alg B\boxplus_\beta \alg C$, with $\beta = (\sell_{\alg A}, \sr_{\alg A}, \gell_{\alg A}, \gr_{\alg A})$.
	\end{theorem}
	\begin{proof}
If $\alg A$ is a blockwise gluing of $\alg B$ and $\alg C$, then $(\alg B, \sell_{\alg A}, \sr_{\alg A}, \gell_{\alg A}, \gr_{\alg A})$ is a lower block by Proposition \ref{prop: lower blocks with c}. Given that $B \cap C = \{1\}$ by definition, and given Lemma \ref{lemma: propblockwise gluing}, one can define $\alg B \boxplus_\beta \alg C$. Notice that the domains of $\alg B \boxplus_\beta \alg C$ and $\alg A$ coincide; let us verify that the operations of $\alg B \boxplus_\beta \alg C$ coincide with those of $\alg A$ as well. 
This is clear for the lattice operations. For the product, it follows from the fact that $\alg B$ and $\alg C$ are $\cdot$-subreduct of both $\alg A$ and $\alg B \boxplus_\beta \alg C$, and the observation that products between elements in $B$ and $C$ coincide in both algebras with: $$bc = \sr_{\alg A}(b),\;\; cb = \sell_{\alg A}(b),$$ for any $b \in B - \{1\}$ and $c \in C-\{1\}$. 

Let us finally verify that the divisions coincide. If $x,y \in C$, there is nothing to prove since $\alg C$ is a subalgebra of both  $\alg B \oplus_\beta \alg C$ and $\alg A$. Let then $x, y \in B$ and consider $x \backslash y \in A$, then either $x \backslash y$ is in $B$ or it is in $C -\{1\}$. If $x \backslash y$ is in $B$, then either $x \leq y$ or there is no element $c \in C$ such that $xc \leq y$. In the first case, $x \backslash y = 1$ in both structures; in the latter case, necessarily $\sr_{\alg A}(x) \not\leq y$ hence  $x \backslash y$ is again the same in both structures given the definition of the division in $\alg B \boxplus_\beta \alg C$. If $x \backslash y$ is in $C -\{1\}$ in $\alg A$, then by residuation $\sr_{\alg A}(x) = xc \leq y$ and then $x \backslash y = c_{\alg C}$ both in $\alg A$ and in $\alg A=\alg B \boxplus_\beta \alg C$, given respectively Lemma \ref{lemma: propblockwise gluing} (1a) and the definition of the division in $\alg B \boxplus_\beta \alg C$.
One can show in an analogous way that also $y/x$ in $\alg B \boxplus_\beta \alg C$ coincide with the one of $\alg A$. 
Lastly, the case $x \in B$ and $y \in C$ and viceversa follow from Proposition \ref{prop: lower blocks with c}
 and the definition of the divisions in $\alg B \boxplus_\beta \alg C$.
		
		We can conclude that  $\alg A=\alg B \boxplus_\beta \alg C$.
\end{proof} 
Observe that, unlike the 1-sum construction, the blockwise gluing is not an obviously associative operation, because one has to define each time the operators for the lower block. We hence introduce an iterated version of the blockwise construction.

The idea is to consider a family of (possibly partial) algebras $(\alg B_{i})_{i \in I}$ indexed by some totally ordered set $(I; \leq)$, and a tuple of operators $(\sell, \sr,\gell, \gr)$ for each pair of indices $i,j \in I$, in order to describe products and divisions among the two components. 
\begin{definition}\label{def: (iterated) blockwise gluing}
Let ${\bf I}=(I,\leq)$ be a totally ordered index set, and let $\{\alg B_i\}_{i \in I}$ be a set of partial IRLs. We call a family $\{(\alg B_{i}, \betaij)\}_{i <_{\II} j}$ with $\betaij = (\Sell i j, \Sr i j , \Gell i j, \Gr i j)$ a \emph{family of blocks} if: 
	\begin{enumerate}
		\item $B_{i} \cap B_{j} = \{1\}$. 
		\item $(\alg B_{i}, \betaij)$ is a weak lower block for all $i,j \in I, i < j$. 
		\item For all $i \in I$, and $x,y \in B_i$ such that $x \not\leq y$, if there exists an index $j > i$ such that $\Srx i j x \leq y$ (or $\Sellx i j x \leq y$) then there is a maximum index  $m_r(x,y) \in I$ (or $m_\ell(x,y) \in I$) with such property.
		\item For all $i \in I$, and $x,y \in B_i$:
		\begin{enumerate}
	\item if $x \backslash y$ is undefined then $x \not \leq y$ and there exists a maximum index $m_r(x,y) \in I$ such that $\Srx i {m_r(x,y)} x \leq y$; in this case $\alg B_{m_r(x,y)}$ has a splitting coatom $c_{m_r(x,y)}$;
		\item if $y / x$ is undefined then there exist a maximum index $m_\ell (x,y) \in I$ such that $\Sellx i {m_\ell(x,y)} x \leq y$; in this case $\alg B_{m_\ell(x,y)}$ has a splitting coatom $c_{m_\ell(x,y)}$. 
		\end{enumerate}
		\item For any $x \in B_k$, and $k < j < i$ we have:
		\begin{enumerate}
		\item $\Sellx k i {\Sellx k j x} = \Sellx k j x = \Sellx k j {\Sellx k i x}$;
		\item $\Srx k i {\Srx k j x} = \Srx k j x = \Srx k j {\Srx k i x}$;
		\item $\Sellx k i {\Srx k j x} = \Srx k j {\Sellx k i x}$;
		\item $\Sellx k j {\Sr k i x} = \Srx k i {\Sellx k j x}$.
		\end{enumerate}
		\item If $i < j < k$ in $\alg I$, then $\Srx i j x \leq \Srx{i}{k}{x}$ and $\Sellx i j x \leq \Sellx{i}{k}{x}$. 
		\item If $x, y \in B_i$ are such that $x \lor y = 1$ for some $i \in I$ that is not the maximum in $I$, then there exist $j > i$ that covers $i$ in ${\bf I}$ such that $\alg B_j$ has a bottom element $\bot_j$.
		\item  If there is a top element $i_\top \in I$, $\alg B_{i_\top} $ is a total algebra and we add to the family the pair $(\alg B_{i_\top}, \beta_{i_\top \infty})$ with $\beta_{i_\top\infty} = ({\rm id},{\rm id},{\rm id},{\rm id})$.
	\end{enumerate}   
Given such a family of blocks, we define the \emph{(iterated) blockwise gluing} $\boxplus_{\bf I}(\alg B_{i}, \betaij)$ to be the algebra with domain $\bigcup_{i \in I} B_i$
and operations defined as follows:
\begin{align*}
x \cdot y &=\left\{\begin{array}{ll}
x \cdot^{\alg B_i} y & \mbox{ if } x, y \in B_i\mbox{ for some } i \in I\vspace{0.1cm}\\
\Srx i j x & \mbox{ if } x \in B_i - \{1\}, y \in B_j \mbox{ and } i < j\vspace{0.1cm}\\
\Sellx j i y & \mbox{ if } x \in B_i - \{1\}, y \in B_j \mbox{ and } j < i
\end{array}\right.\\
x\back\, y &=\left\{\begin{array}{ll} 
x\back^{\alg B_i} y & \mbox{ if } x, y \in  B_i, \mbox { and } x \leq y \mbox{ or there is no }  k\in I 	\mbox{ such that }  \Srx i {k} x \leq y\vspace{0.1cm}\\ 
c_{m_r(x,y)} & \mbox{ if } x, y \in  B_i, x \not \leq y, \mbox{ and }  \Srx i {m_r(x,y)} x \leq y\vspace{0.1cm}\\ 
\Gellx j i y  & \mbox{ if } x \in B_i -\{1\} , y \in B_j \mbox{ and } j < i \vspace{0.1cm}\\
1 & \mbox{ if } x \leq y\\ \end{array}\right.\\
y/\, x &=\left\{\begin{array}{ll} 
y/\,^{\alg B_i} x &\mbox{ if } x, y \in  B_i, \mbox { and } x \leq y \mbox{ or there is no } k \in I  \mbox{ such that } \Sellx i {k} x \leq y \vspace{0.1cm}\\ 
c_{m_\ell(x,y)}& \mbox{ if } x, y \in  B_i, x \not \leq y, \mbox{ and } \Sellx i {m_\ell(x,y)} x \leq y \vspace{0.1cm}\\
\Grx j i y & \mbox{ if } x \in B_i - \{1\}, y \in B_j \mbox{ and } j < i \vspace{0.1cm}\\
1 & \mbox{ if } x \leq y\\\end{array}\right.\\
x \meet y &= \left\{\begin{array}{ll}
x \meet_{\alg B_i} y & \mbox{ if } x, y \in B_i\\
x & \mbox{ if } x \in B_j \mbox{ and } y \in B_i \mbox{ with } j < i\\
y & \mbox{ if } x \in B_j \mbox{ and } y \in B_i \mbox{ with } i < j \vspace{0.1cm}\\
\end{array}\right.\\
x \join y &= \left\{\begin{array}{ll}
\bot_j &\mbox{ if }  x, y \in B_i \mbox{ with } i \neq \max\alg I, x \join_{\alg B_i} y=1, j \mbox{ covers } i \\
x \join_{\alg B_i} y & \mbox{ if } x, y \in B_i, x \join_{\alg B_i} y \neq 1\\
x & \mbox{ if } x \in B_i \mbox{ and } y \in B_j \mbox{ with } j < i\\
y & \mbox{ if } x \in B_i \mbox{ and } y \in B_j \mbox{ with } i < j \vspace{0.1cm}\\
\end{array}\right.
\end{align*}
\end{definition}
\begin{theorem}
	Given a family of blocks $\{(\alg B_{i}, \betaij)\}_{i <_{\II} j}$, their blockwise gluing $\boxplus_{\bf I}(\alg B_{i}, \betaij)$ is an IRL.
\end{theorem}
\begin{proof}
	First, as a straightforward consequence of the definitions, $\boxplus_{\bf I}(\alg B_{i}, \betaij)$ has a lattice reduct with top element $1$. Let us then consider $(\bigcup_{i \in { I}} B_i, \cdot, 1)$, we proceed to show that it is a monoid. In particular, we verify that the product is associative. Notice that if $x,y,z$ belong to the same component $B_i$, then associativity holds since $\alg B_i$ is a partial IRL in which products are always defined. If two among $x,y,z$ belong to the same component, then associativity can be proved as in Proposition \ref{prop: proving to be IRL}. Thus, let $x,y,z$ be in three different components $\alg B_i$, $\alg B_j$, and $\alg B_k$ with $i>j>k$, then we need to consider the following cases. 
	\begin{enumerate}
		\item Let $x \in B_i, y \in B_j$ and $z \in B_k$, then: 
		 $(xy)z=\Sellx j i y z=\Sellx k j z$ and $x(yz)=x \Sellx k j z=\Sellx{k}{i}{\Sellx{k}{j}{z}}$; by Definition \ref{def: (iterated) blockwise gluing} (5a), we get $(xy)z=x(yz)$.		
		\item Let $y \in B_i, x \in B_j$ and $z \in  B_k$, then: 
	$(xy)z=\Srx j i x z=\Sellx k j z$ and $x(yz)= x \Sellx k i z=\Sellx k j {\Sellx k i z}$; by Definition \ref{def: (iterated) blockwise gluing} (5a), we get $(xy)z=x(yz)$. 
	
		\item Let $x \in  B_i, z \in B_j$ and $y \in B_k$, then: 
		 $(xy)z=\Sellx k i y z=\Srx k j {\Sellx k i y}$ and $x(yz)= x \Srx k j y=\Sellx k i {\Srx k j y}$; by Definition \ref{def: (iterated) blockwise gluing} (5c), we get $(xy)z=x(yz)$;
		
		\item Let $z \in  B_i, x \in B_j$ and $y \in B_k$, then: 
		 $(xy)z=\Sellx k j y z= \Srx k i {\Sellx k j y}$ and $x(yz)= x \Srx k i y=\Sell k j ({\Srx k i y})$; thus, by Definition \ref{def: (iterated) blockwise gluing} (5d), we get $(xy)z=x(yz)$; 
		
		\item Let $z \in B_i, y \in B_j$ and $x \in B_k$, then: 
		 $(xy)z=\Srx k j x z=\Srx k i {\Srx k j x}$ and $x(yz)= x \Srx j i y= \Srx k j x $; by Definition \ref{def: (iterated) blockwise gluing} (5b), we get $(xy)z=x(yz)$; 
		
		\item Let $y \in  B_i, z \in  B_j$ and $x \in B_k$, then: 
		 $(xy)z=\Srx k i x z=\Srx k j {\Srx k i x}$ and $x(yz)=x \Sellx j i z= \Srx k j x$; agin by Definition \ref{def: (iterated) blockwise gluing} (5b), we get $(xy)z=x(yz)$.
\end{enumerate} 
				Let us now verify that the residuation law holds, i.e., for all $x,y,z$ in the domain: $xy \leq z$ iff $y \leq x \backslash z$ iff $x \leq z/y$. If at least two elements are in the same component, the proof is analogous to the one in Proposition \ref{prop: proving to be IRL}, with some adjustments due to the new divisions in the following cases: 
		\begin{enumerate}
			\item Let $x,z \in B_i, y \in B_j$ and $i < j$ in $\alg I$. If $xy = \Srx i j x \leq z$, then either $x \leq z$ and then $y \leq x \backslash z = 1$, or $y \leq x \backslash z = c_{m_r(x,z)}$; indeed necessarily $j \leq m_r(x,z)$, since the latter is the largest index $m$ such that $\Srx i m x \leq z$. Conversely, if $y \leq x \backslash z$, then either $x \leq z$ or $\Srx i {m_r(x,z)} x \leq z$. In the former case, we get $xy = \Srx i j x \leq x \leq z$, since $\sigma^r_{ij}$ is an interior operator. In the latter, $y \leq x \backslash z = c_{m_r(x,z)}$, and therefore $j \leq m_r(x,z)$ which implies that $xy = \Srx i j x \leq \Srx i {m_r(x,z)} x \leq z$ by Definition \ref{def: (iterated) blockwise gluing}(6). For the other equivalence, $xy = \Srx i j x \leq z$ iff $x \leq \Grx{i}{j}{z} = z / y$ by the fact that $(\sigma^r_{ij}, \gamma^r_{ij})$ is a residuated pair.
			\item Let $x \in B_j, y,z \in B_i$ and $i < j$ in $\alg I$. Then residuation can be shown analogously to the case above.
		\end{enumerate}
		Finally, if $x,y,z$ are in three different components, then residuation can be easily seen to hold as a consequence of the definitions and the order. In particular, let $i < j < k$, then the three inequalities are verified in the following cases: $x \in B_i, y \in B_j, z \in B_k$; $x \in B_i, z \in B_j, y \in B_k$; $y \in B_i, x \in B_j, z \in B_k$; $y \in B_i,z \in B_j, x \in B_k$. The three inequalities are all invalid in the remaining cases: $ z \in B_i, x \in B_j, y \in B_k$ and $z \in B_i, y \in B_j, x \in B_k$.

		We can conclude that $\boxplus_{\bf I} (\alg B_i, \betaij)$ is an IRL.
\end{proof}
We will now show that the class of IRLs which can be characterized by the previous construction is the desired one, in which algebras can be seen as made by components stacked on top of each other, glued at $1$, and multiplying in blocks.
\begin{definition}\label{def:blockwiseclass}
	We define  $\BlG \sse \mathsf{IRL}$ to be the class of IRLs $\alg A$ such that:
\begin{enumerate}
	\item the domain of $\alg A$ can be seen as the union of a set $\{(B_i)\}_{i \in I}$ for some totally ordered index set ${\bf I} = (I, \leq)$, with $B_i \cap B_j = \{1\}$.
	\item The elements of $B_i- \{1\}$ are strictly below the elements of $B_j$ iff $i < j$.
	\item Each $B_i$, for $i \in I$, is the domain of a partial IRL $\alg B_i$ with the inherited operations from $\alg A$, where all operations are defined except possibly some divisions and some joins.
	\item if $x, y \in B_i- \{1\}, z \in B_j$ and $j < i$, then $xz = yz$ and $zx = zy$.
\end{enumerate} 

\end{definition}

\begin{proposition}\label{proposition: a in BG}
$\alg A \in \BlG$ if and only if it is a blockwise gluing of some family of blocks. 
\end{proposition}
\begin{proof}
	Let $\alg A \in \BlG$, we will prove that it is a blockwise gluing of its components $\{\alg B_i \}_{i \in I}$, with suitably defined operators $\betaij$. If $\alg A \in \BlG$, then its domain can be seen as $\bigcup_{i \in { I}} B_i$, where ${\bf I}$ is a totally ordered index set. If there is a top element $i_\top \in I$, we set $\sell_{i_\top \infty} = \sr_{i_\top \infty} = \gell_{i_\top \infty} = \gr_{i_\top \infty} = {\rm id}$. Now, recalling that if $x, y \in B_j$ and $z \in B_i$ with $i < j$, then $xz=yz$ and $zx=zy$, we define the rest of the operators $\Sell i j$, $\Sr i j , \Gell i j$ and $\Gr i j$ in the obvious way: given $c_j \in \alg B_j$, we set
	$$
	\Sellx i j x=c_jx \qquad \Srx i j x=xc_j \qquad \Gellx i j x=c_j \backslash x \qquad \Grx i j x=x/c_j
	$$
	for any $x \in B_i- \{1\}$, and all operators are $1$ at $1$. 
	We proceed to prove that $\{(\alg B_i, \Sell i j, \Sr i j, \Gell i j, \Gr i j)\}_{i <_{\alg I} j}$ is a family of blocks according to Definition \ref{def: (iterated) blockwise gluing}. 
	
	First note, for property (1), that $B_i \cap B_j= \{1\}$ by definition of the class $\BlG$. For (2), one can show that $(\alg B_i, \Sell i j, \Sr i j, \Gell i j, \Gr i j)$ is a weak lower block for all $i, j \in I$ with $i < j$, following the proof of Proposition \ref{prop: lower blocks with c}. 
	
	Let us check (3), i.e., for all $i \in I$, and $x,y \in B_i$ such that $x \not\leq y$, if there exists an index $j > i$ such that $\Srx i j x \leq y$ then there is a maximum index  $m_r(x,y) \in I$ with such property (the proof for the left operators being analogous). Notice that if $x c_j = \Srx i j x \leq y$, we get by residuation that $c_j \leq x \backslash y$. It follows that the index $m$ such that $x \backslash y \in B_m$ is the desired $m_r(x,y)$. 
	
	We now verify (4a), the proof of (4b) being similar. Let $x, y \in B_i$, for some $i \in I$, such that $x \backslash y \not \in B_i$. Then necessarily $x \not\leq y$, since $1 \in B_i$, and $x \backslash y$ belongs to some $B_j$ with $j > i$. Thus, since 
	$\Srx i j x =x (x\backslash y) \leq y$, as shown in the previous point there is a maximum index $m_r(x,y) \in I$ such that $\Srx i {m_r(x,y)} x \leq y$; moreover, since all the elements in $B_{m_r(x,y)}$ multiply the same with respect to the ones in $B_i$, $x \backslash y$ is the splitting coatom of $\alg B_{m_r(x,y)}$.
		
Notice that properties (5) and (6) follow straightforwardly from, respectively, associativity and order preservation of the product in $\alg A$.

For (7), suppose that given $x,y \in \alg B_i$, $x \join y$ is not defined in $\alg B_i$. Then, since $\alg B_i$ is strictly below any $\alg B_j $ with $j >i$, necessarily $x \join y=\perp_j$ where $\perp_j$ is the minimum of the component $\alg B_j$, with $j$ covering $i$. 

Finally, for (8), if there is a top element $i_\top \in I$, $\alg B_{i_\top} $ necessarily is a total algebra, and we already defined $\beta_{i_\top\infty} = ({\rm id},{\rm id},{\rm id},{\rm id})$.

Thus, given $\alg A \in \BlG$, one can define the blockwise gluing $\boxplus_{\bf I}(\alg B_i, \betaij)$ where the $\alg B_i$s are the components of $\alg A$ and the $\betaij$s are defined as above. It is left to show that $\alg A= \boxplus_{\bf I}(\alg B_i, \betaij)$, that is to say, the operations of $\alg A$ coincides with the ones in $\boxplus_{\bf I}(\alg B_i, \betaij)$; this is a straightforward consequence of the definitions. Hence we can conclude that $\alg A= \boxplus_{\bf I}(\alg B_i, \betaij)$. 

For the converse, the fact that a blockwise gluing of a family of blocks is in $\BlG$ is a direct consequence of the definition.
\end{proof}
\begin{remark}
	Proposition \ref{proposition: a in BG} shows that the iterated construction of blockwise gluing characterizes the class of algebras in $\BlG$, which are intuitively gluings of some subcomponents whose products behave in blocks. We wish to point out that the relationship between algebras and family of blocks is not one-one. Precisely, different families of blocks can be used to construct the same algebra. This is due to the fact that some divisions present in the blocks can be ``forgotten'' in the blockwise gluing: it is possible that for $x,y \in B_i$, $x \backslash_{\alg B_i} y$ is defined in $B_i$, but there is a (maximum) $m \in I$ such that $\Srx i m x \leq y$ and hence $x \back y = c_m$ in the gluing. Therefore, substituting $\alg B_i$ with a variation of it, where $x \backslash_{\alg B_i} y$ is not defined, would yield the same blockwise gluing.
	This is more of a technicality than a conceptual point, but it allows one to use total algebras to construct a gluing, since it does not force the initial blocks to be partial algebras even when some of their divisions will be forgotten in the construction. 
\end{remark}

\subsection{Preservation of identities and $\vv{HSP_u}$}
In this subsection we investigate the interaction of the blockwise gluing construction with class operators and equational properties. For simplicity of exposition, we restrict to the non-iterated construction. This investigation is inspired by the analogous ones for 1-sums \cite{AM2003} and gluings of residuated lattices \cite{GalatosUgolini}.

\subsubsection{Preservation of identities} We would like to identify which kind of equations are preserved by the blockwise gluing construction. In particular, we characterize the cases in which commutativity, divisibility, and semilinearity are preserved. Observe that linearity is obviously maintained by the construction. 

Let $\alg B$ be a partial integral residuated lattice. We call it semilinear (or divisible) if it satisfies the semilinearity (or divisibility) identity whenever all the terms involved are defined.
 \begin{proposition}
	Let $\alg B \boxplus_\beta \alg C$ be a blockwise gluing, with $\beta =(\sell, \sr, \gell, \gr)$. Then:
	\begin{enumerate}
		\item $\alg B \boxplus_\beta \alg C$ is commutative if and only if $\alg B$ and $\alg C$ are commutative and $\sell=\sr$. 
		\item $\alg B \boxplus_\beta \alg C$ is divisible if and only if $\alg C$ is divisible, $\alg B$ is divisible, and the blockwise gluing coincides with the 1-sum. 
		\item $\alg B \boxplus_\beta \alg C$ is semilinear if and only if $\alg B$ is linear, $\alg C$ is semilinear, and either $\alg C$ is linear or the gluing is a 1-sum.
	\end{enumerate}
\end{proposition}

\begin{proof}
	\begin{enumerate}
		\item Note that the right-to-left direction is obvious. For the left-to-right, assume $\alg B \boxplus_\beta \alg C$ is commutative; necessarily $\alg B$ and $\alg C$ are commutative since they are subreducts under products. Moreover, for any  $x \in B$ and $y \in C$, $\sr(x)=x \cdot y = y \cdot x = \sell (x)$, by definition of the operations and by the fact that $\alg B \boxplus_\beta \alg C$ is commutative.
		\item Note again that the right-to-left direction is obvious. For the left-to-right direction, assume $\alg B \boxplus_\beta \alg C$ is divisible; then necessarily $\alg C$ is divisible and $\alg B$ is divisible. Moreover, since $\alg B \boxplus_\beta \alg C$ is divisible, for any $x \in B$ and $y \in C-\{1\}$, $$x=x \meet y = y \cdot (y \backslash x)= \sell(\gell(x)).$$ From the fact that $x= \sell(\gell(x)$, one gets $$\sell(x)=\sell(\sell(\gell(x)))=\sell(\gell(x))=x.$$ Similarly, one can show that also $\sr$ coincides with the identity map, yielding that $\alg B \boxplus_\beta \alg C$ is the 1-sum of $\alg B$ and $\alg C$.
		\item Suppose that $\alg B \boxplus_\beta \alg C$ is semilinear, then $\alg C$ is also semilinear since it is a subalgebra of the gluing. We now show that $\alg B$ is necessarily linear. By way of contradiction, consider two incomparable elements $x,y \in B$, i.e. such that $x \not \leq  y$ and $y \not \leq  x$. Then $x \backslash y \neq 1$ and $y \backslash x \neq 1$. Notice that by the definition of the operations in the gluing, either $x \backslash y \in B-\{1\}$ or $x \backslash y = c_{\alg C}$, the coatom of $\alg C$; the same holds for $y \backslash x$. Thus, in any case, $(x \backslash y) \join (y \backslash x) < 1$. But since $\alg B \boxplus_\beta \alg C$ is semilinear, and so in particular prelinear, $(x \backslash y) \join (y \backslash x)=1$ yielding a contradiction. Therefore, $\alg B$ is necessarily linear. For the last part, suppose that $\alg C$ is not linear. Then, there are incomparable elements in $\alg C$, say $x, y \in C$ such that $x \not \leq  y$ and $y \not \leq  x$. We prove that, necessarily, both operators $\sell$ and $\sr$ must coincide with the identity map on $\alg B$, or in other words, the gluing is a 1-sum, which will prove the claim. Suppose that there is an element $u \in B$ such that $\sell(u) < u$. Take then any $v \in B$, given that $\alg B \boxplus_\beta \alg C$ is semilinear it holds that:
		$$(u\backslash(y \backslash x)u) \lor  (v(x \backslash y )/ v)=1.$$
	Notice that $u\backslash(y \backslash x)u = u \backslash \sell(u)$ and $v(x \backslash y )/ v = \sr(v) / v$. Since $\sell(u) < u$, if also $\sr(v) < v$ then both $u \backslash \sell(u)$ and $\sr(v) / v$ are either elements of $B - \{1\}$ or they coincide with the splitting coatom of $\alg C$. In any case, their join is strictly smaller than $1$, a contradiction. Hence necessarily $\sr(v) = v$, for all $v \in B$. This proves that at least one among $\sr$ and $\sell$ is the identity map. We proceed to prove that they must coincide. Indeed, by way of contradiction suppose, without loss of generality, that $\sr$ is the identity map but $\sell$ is not: i.e., there is $u \in B-\{1\}$ such that $\sell(u) < u$. Consider again the incomparable elements of $\alg C$, $x$ and $y$. An application of semilinearity yields that:
	$$(u\backslash(y \backslash x)u) \lor  (x(x \backslash y )/ x)= (u \backslash \sell(u)) \lor (x(x \backslash y )/ x) =1.$$
	Since $\sr(u) = u > \sell(u)$, by the definition of the operations in the gluing we get that $u \backslash \sell(u) \in B-\{1\}$. Hence necessarily, for the above join to be $1$, $x(x \backslash y )/ x = 1$. But then by residuation $x \leq x(x \backslash y ) \leq y$, a contradiction since $x$ and $y$ are incomparable. Hence, there can be no element $u \in B-\{1\}$ such that $\sell(u) < u$, which proves that also $\sell$ is the identity map. This completes the left-to-right direction.
		
		For the converse, notice first that if both $\alg B$ and $\alg C$ are linear, then the gluing is linear itself and hence semilinear. Suppose now that $\alg B$ is linear, $\alg C$ is semilinear, and the gluing is a 1-sum, i.e., $\sell = \sr = {\rm id}$.
		We verify that $\alg B \boxplus_\beta \alg C$ satisfies, for all $x,y,u,v$ in the domain, the semilinearity identity: 
		\begin{equation*}
	        (u\backslash(y \backslash x)u) \lor  (v(x \backslash y )/ v)=1.
       \end{equation*}
       We distinguish the following cases: 
       \begin{enumerate}
       	\item If $x, y, u,v \in C$, then the identity holds since $\alg C$ is semilinear.
       	\item If $x$ and $y$ are comparable, then either $x \leq y$ or $y \leq x$  which implies that the equation holds since either $u\backslash(y \backslash x)u = u \backslash u = 1$ or $v(x \backslash y )/ v = v / v = 1$, hence either way their join is $1$. Notice that this in particular covers the cases where at least one among $x$ and $y$ is in $B-\{1\}$.
       	\item Let now $x, y, v \in C$ with $x$ and $y$ incomparable, and suppose $u \in B-\{1\}$. Then $u\backslash(y \backslash x)u = u \backslash \sell(u) = u \backslash u = 1$ and the identity holds. The proof for $x, y, u \in C$ and $v \in B-\{1\}$ is completely analogous.
       \end{enumerate}
        We covered all the possible cases, hence proving that $\alg B \boxplus_\beta \alg C$ is semilinear.\qedhere
\end{enumerate}
\end{proof}
Let us now observe that, since the upper component of the blockwise gluing is a subalgebra, and the lower component is a subreduct of the blockwise gluing, most of the one-variable identities are preserved. 

\begin{proposition}
	The blockwise gluing $\alg B \boxplus_\beta \alg C$ preserves all the one-variable equations valid both in $\alg B$ and $\alg C$ not involving divisions. In case the gluing is a 1-sum, all the one-variable equations valid both in $\alg B$ and $\alg C$ are preserved. 
\end{proposition}

Considering the previous proposition, we get that the blockwise gluing construction preserves $n$-potency, i.e. $x^n=x^{n+1}$ for $n \geq 1$. It is also important to notice that, in general, the blockwise gluing construction does not preserve all the monoidal identities that are true both in $\alg B$ and $\alg C$. For instance, as shown above, commutativity is only preserved when $\sell=\sr$. In fact: 
 \begin{proposition}
	If  $\sell=\sr$, $\alg B \boxplus_\beta \alg C$ satisfies all the monoidal identities that are true in both $\alg B$ and $\alg C$. 
\end{proposition}
\begin{proof}
	The proof goes as the one of Proposition 5.3 in \cite{GalatosUgolini}.
\end{proof}

\subsubsection{$\vv{HSP_u}$} Constructions such as the blockwise gluing can help one better understand and describe the structure of the algebras. To this end, we now provide a description of the lattice of congruence filters of the blockwise gluing of two algebras, and we characterize when a blockwise gluing is subdirectly irreducible. Moreover, we describe the fundamental algebraic operators of subalgebras, homomorphic images, and ultrapowers of a blockwise gluing.  
 
\begin{definition}
	Let $\alg B$ be a partial integral residuated lattice. A {\em partial filter} $F$ of $\alg B$ is a nonempty upset of $\alg B$ that is closed under existing products and existing conjugates, i.e, for any $a \in B$ and $x, y \in F$, $xy$, $yx$, $ax/a$, and $a\backslash xa$ are in $F$ whenever defined.
\end{definition}

Let us consider the poset of partial filters of $\alg B$ ordered by inclusion; we observe that it is a lattice with intersection as meet, and the smallest partial filter containing the union as join. 
Consider now a lower block $(\alg B, \beta)$ with $\beta = (\sell, \sr, \gell, \gr)$. We call {\em $\beta$-filter} of $(\alg B, \beta)$ a partial filter $F$ such that: for any $b \in B$ such that $\sell(b) \not\leq\sr(b)$ it holds that $\sr(b) /b \in F$, and similarly, for any $b \in B$ such that $\sr(b) \not\leq \sell(b)$ it holds that $b \backslash \sell(b) \in F$. $\beta$-filters also form a lattice, which we denote by $\betafil(\alg B)$. Notice that if $\sell=\sr$, then all partial filters are $\beta$-filters.

We are now ready to describe the lattice of congruence filters of a blockwise gluing. 
\begin{theorem}\label{theorem: description of filters}
	Let $\alg B \boxplus_\beta \alg C$ with $\beta=(\sell, \sr, \gell, \sr)$. Then 
	\begin{enumerate}
		\item if $\sell=\sr$, $\fil(\alg B \boxplus_\beta \alg C)\cong \fil(\alg C) \oplus (\betafil(\alg B) -\{1\})$, where $\oplus$ is the ordinal sum of posets;
		\item if $\sell \neq \sr$, $\fil(\alg B \boxplus_\beta \alg C)\cong\betafil(\alg B)$.
	\end{enumerate}
\end{theorem}
\begin{proof}
	(1)  Let us consider the map $\varphi: \fil(\alg B \boxplus_\beta \alg C) \to \fil(\alg C) \oplus (\betafil(\alg B)-\{1\})$ defined by:
\begin{align*}
	\varphi(F) = &\left\{\begin{array}{l l}
F &\mbox{ if } F \cap B = \{1\}, \vspace{0.1cm}\\
F \cap B &\mbox{ otherwise.} \vspace{0.1cm}
\end{array}\right.
\end{align*}
The map $\varphi$ is well defined. Indeed, let $F \in \dfil(\alg B \boxplus_\beta \alg C)$, then if $F \cap B = \{1\}$, clearly $F \in \dfil(\alg C)$. 
	  If instead there exists an $ x \in F \cap  B$ with $x \neq 1$, then notice that $C \subseteq F$, since $F$ needs to be closed upwards, and it is easy to see that $F \cap  B \in \dbetafil(\alg B)-\{1\}$. 
	  
	  We prove that $\varphi$ is the desired lattice isomorphism. It is a straightforward consequence of the definition that $\phi$ is injective and preserves the order. To prove surjectivity, let $F \in \dfil(\alg C) \oplus (\dbetafil(\alg B)-\{1\})$, then either $F \in \dfil(\alg C)$ or $F \in \dbetafil(\alg B)-\{1\}$. 
	  
	  If $F \in \dfil(\alg C)$, we show that $F$ is also a congruence filter of $\alg B \boxplus_\beta \alg C$, and hence $\varphi(F) = F$. Since $\alg C$ is a subalgebra of the gluing, it suffices to check that $F$ is closed under conjugates with respect to elements of $\alg B$. Suppose there is $x \in F, x \neq 1$, and let $b \in B-\{1\}$. Then $b \backslash xb = b \backslash \sell(b) = c_{\alg C}$ by the definition of the operations, since $\sr(b) = \sell(b)$ by hypothesis. Similarly, $bx/b = c_{\alg C}$. Notice that $x \leq c_{\alg C}$ hence $c_{\alg C} \in F$. Thus, $F$ is a filter of the gluing as desired. 
	  
	  Suppose now $F \in \dbetafil(\alg B)$, $F \neq\{1\}$, and let $F' = F \cup C$. We prove that $F'$ is a congruence filter of the gluing. It suffices to check closure under conjugates involving elements of $\alg B$, since $F'$ is clearly closed under products and upwards, and $\alg C$ is a subalgebra of the gluing. We need to consider the following cases:
	  \begin{itemize}
	  	\item If $x \in C, b \in B$, then $bx/b = \sr(b) / b \in C\sse F'$ by the definition of the operations, since $\sell(b) = \sr(b)$. Similarly $b \back xb \in C \sse F'$.
	  	\item  Let now $x \in F, b \in B$, then $bx/b$ and $b \backslash xb$ are either in $B$, hence in $F$ given that $F$ is a partial filter, or in $C$, hence in $F'$.
	  	\item Consider then $x \in F, c \in C$. Then $cx / c = \sell(x) / c = \gr\sell(x)$. Now, since $\sr(x) = \sell(x)$ by hypothesis, we get $x \leq  \gr\sell(x) \in F\sse F'$, using that $(\sr, \gr)$ is a residuated pair. 
	  \end{itemize}
	   Hence $F'$ is a filter of $\alg B \boxplus_\beta \alg C$, and clearly $\varphi(F') = F$. We have shown that $\varphi$ is an order-preserving bijection. To see that it is a lattice isomorphism, it suffices to observe that, given what we have shown above, the map $\varphi': \fil(\alg C) \oplus (\betafil(\alg B)-\{1\}) \to  \fil(\alg B \boxplus_\beta \alg C)$ defined by:
	    \begin{align*}
	\varphi'(F) = &\left\{\begin{array}{l l}
F &\mbox{ if } F \in \dfil(\alg C), \vspace{0.1cm}\\
F \cup C &\mbox{ if } F \in \dbetafil(\alg B) \vspace{0.1cm}
\end{array}\right.
\end{align*}
	   is the inverse of $\varphi$, and it is also order preserving.
	   This proves that $\varphi$ is the desired lattice isomorphism.
	 
	 (2) Suppose now that $\sell \neq \sr$. First, we notice that every nontrivial filter $F \in  \dfil(\alg B \boxplus_\beta \alg C)$ intersects $B - \{1\}$. Indeed, let $ x\in F-\{1\}$. Since  $F$ is closed under conjugation, for any $y \in B$ and $x \in F $, $y x / y$ and $y \backslash x y $ are in $F$. Recalling that $\sell \neq \sr$, there exists $b \in B$ such that $\sell(b)\neq \sr(b)$, i.e. either $\sell(b)\not \leq \sr(b)$ or $\sr(b)\not \leq \sell(b)$. If $\sell(b)\not \leq \sr(b)$, then, by definition of the operations, $b x / b=\sr(b)/b \in B -\{1\}$. Similarly, if $\sr(b)\not \leq \sell(b)$, then $b \backslash x b=b \backslash \sell(b) \in B-\{1\}$. 

Let us then consider the map $\psi: \fil(\alg B \boxplus_\beta \alg C) \to \betafil(\alg B)$ sending $F \mapsto F \cap B$. To see that $\psi$ is indeed a map from the filters of the gluing to the $\beta$-filters of the lower block $(\alg B, \beta)$, consider any nontrivial $F \in \dfil(\alg B \boxplus_\beta \alg C)$. It is clear that $F \cap B$ is a partial filter of $\alg B$. To see that it is a $\beta$-filter, notice first that, as argued above, any $c \in C$ is in $F$. Thus, by closure under conjugates, for every $b \in B$, both $b \backslash cb = b \backslash \sell(b)$ and $bc / b = \sr(b) / b$ are in $F$. Therefore, if $\sell(b) \not\leq\sr(b)$ we have that $bc / b = \sr(b) /b \in F \cap B$, and similarly, for any $b \in B$ such that $\sr(b) \not\leq \sell(b)$ we have that $b \backslash \sell(b) \in F \cap B$. Hence, $F \cap B$ is a $\beta$-filter.

 We move to show that $\psi$ is the desired lattice isomorphism. As above, $\psi$ is clearly injective and order preserving. To prove surjectivity, Note first that $\psi(\{1\}) = \{1\}$. Now
 let $F \in \dbetafil(\alg B) - \{1\}$, and consider $F' = F \cup C$. We prove that $F' \in \dfil(\alg B \boxplus_\beta \alg C)$, by showing that it is closed under conjugates. We need to consider the following cases:
 \begin{itemize}
  	\item Let first $x \in C, b \in B$, then $bx /b = \sr(b) / b$. If $\sell(y) \leq \sr(y)$ then $\sr(b) / b \in C \sse F'$. If instead $\sell(y) \not\leq \sr(y)$, then $\sr(b) / b \in F$ since $F$ is a $\beta$-filter. The proof that $b \backslash xb \in F$ is similar. 
 	\item Let now $x \in F, b \in B$, then $bx /b$ and $b \backslash xb$ are either in $B$, hence in $F \sse F'$ because $F$ is closed under existing conjugates, or in $C$, hence in $F'$.
 	\item Finally, let $x \in F, c \in C$. Then $cx/c = \sell(x)/c \geq \sell(x) \geq x^2 \in F$, and then $cx/c \in F$. Similarly $c \backslash xc \in F$. 
 \end{itemize}

Thus $F'$ is a congruence filter of the gluing, and since $\psi(F') = F$, we proved surjectivity of $\psi$. It also follows from what we have shown that the map $\psi': \betafil(\alg B) \to \fil(\alg B \boxplus_\beta \alg C)$ defined by $\psi'(\{1\}) = \{1\}$ and $\psi'(F) = F \cup C$ is the inverse of $\psi$, and $\psi'$ is also clearly order-preserving. Therefore, $\psi$ provides the lattice isomorphism showing $\fil(\alg B \boxplus_\beta \alg C)\cong\betafil(\alg B)$.
\end{proof}
As a consequence:
\begin{corollary}
	Let $\alg B \boxplus_\beta \alg C$ be a blockwise gluing with $\beta=( \sell, \sr, \gell, \gr)$, then: 
	\begin{enumerate}
		\item if $\sell=\sr$, then $\alg B \boxplus_\beta \alg C$ is subdirectly irreducible if and only if $\alg C$ is subdirectly irreducible; 
		\item if $\sell \neq \sr$, then $\alg B \boxplus_\beta \alg C$ is subdirectly irreducible if and only if $\betafil( B)$ has a monolith.
	\end{enumerate}
\end{corollary}
Let us now describe the homomorphic images (or equivalently, the quotients) of a blockwise gluing. Given a congruence filter $H$ of a blockwise gluing $\alg B \boxplus_\beta \alg C$, we write $\alg B / H$ for the partial subalgebra of $(\alg B \boxplus_\beta \alg C)/H$ with domain $B/H$. 
\begin{proposition}
	Let $H$ be a congruence filter of a blockwise gluing $\alg B \boxplus_\beta \alg C$ with $\beta = (\sell, \sr, \gell, \gr)$. Let $\beta_H=(\sell_H, \sr_H, \gell_H, \gr_H)$ where $\alpha_H[x]=[\alpha(x)]$ with $\alpha \in \{ \sell, \sr, \gell, \gr\}$.
	Then:
	\begin{enumerate}
		\item  $(\alg B \boxplus_\beta \alg C)/H\cong \alg B / H \boxplus_{\beta_H} \alg C /H$, if $\sell=\sr$;
		\item $(\alg B \boxplus_\beta \alg C)/H\cong \alg B/ H$, if $\sell \neq \sr$.
	\end{enumerate}
\end{proposition}
\begin{proof}
It is easy to check that $(\alg B \boxplus_\beta \alg C)/H$ is a blockwise gluing of $\alg B /H$ and $\alg C/H$ in the sense of Definition \ref{def: block IRL}. Indeed:
\begin{enumerate}
	\item the domain of the gluing is clearly $B/H \cup C/ H$, and $B/H \cap C/ H = \{[1]\}$.
	\item the elements of $B/H - \{[1]\}$ are strictly below those of $C/ H$. Indeed, either $C/H = \{[1]\}$, or $B \cap H = \{1\}$. Hence, given $b \in B-\{1\}$ and $c \in C$ such that $[c] \neq [1]$, $c \backslash b \in B$ and then $c \notin H$, which implies that $[b] \neq [c]$.
	\item  $\alg C/H$ is a subalgebra of the gluing since it is closed under all operations.
	\item Given $[x], [y] \in C/H$ and $[z] \in B/H-\{[1]\}$, $[x][z] = [xz] = [yz]= [y][z]$ and similarly $[z][x] = [zx] = [zy]= [z][y]$.
\end{enumerate}
It then follows directly by Theorem \ref{thm:blockwise} that $(\alg B \boxplus_\beta \alg C)/H \cong \alg B/H \boxplus_{\beta_H} \alg C/H$. Indeed, fixing $[c] \in C/H$, we have that $\sell_{\alg B \boxplus_\beta \alg C/H}([x]) = [c][x] = [cx] = [\sell(x)] = \sell_H[x]$. The proof for the other operators is completely analogous. The points (1) and (2) then follow from the description of congruence filters in the gluing, Theorem \ref{theorem: description of filters}.
		\end{proof}
	
Let us proceed to describe subalgebras. 
Let $(\alg B, \beta)$ be a lower block, with $\beta = (\sell, \sr, \gell, \gr)$.
A partial IRL $\alg S$ is a {\em $\beta$-subreduct} of the lower block if its domain is a subset $S$ of $B$ closed under all operations defined in $\alg B$, and such that for all $x \in B$, $\alpha(x) \in S$ for $\alpha \in \{ \sell, \sr, \gell, \gr\}$. We call $\alg S$ a {\em $\beta$-subalgebra} if it is a $\beta$-subreduct that is a total algebra. Moreover, let $\alg A$ be an $\vv{IRL}$ with a splitting coatom, then a subalgebra $\alg T$ of $\alg A$ is called {\em coatomic} if it contains the coatom of the algebra $\alg A$. 
\begin{proposition}
	Let $\alg B \boxplus_\beta \alg C$ be the blockwise gluing of $(\alg B, \sell, \sr, \gell, \gr)$ and $\alg C$. Then an IRL $\alg S$ is a subalgebra of $\alg B \boxplus_\beta \alg C$ iff it is one of the following: 
	\begin{enumerate}
		\item a subalgebra of $\alg C$;
		\item a (total) subalgebra of $\alg B$;
	    \item $\alg B_1 \boxplus_{\beta_1} \alg C_1$, where
		\begin{itemize}
			\item $\alg C_1$ is a non-trivial coatomic subalgebra of $\alg C$
			\item $\alg B_1$ is a $\beta$-subreduct of $\alg B$
			\item $\beta_1$ is given by the restrictions of the operators in $\beta$ to $\alg B_1$.
		\end{itemize}
		\item $\alg B_2 \boxplus_{\beta_2} \alg C_2$ where
		\begin{itemize}
			\item $\alg C_2$ is a subalgebra of $\alg C$
			\item $\alg B_2$ is a $\beta$-subalgebra of $\alg B$
			\item $\beta_2$ is given by the restrictions of the operators in $\beta$ to $\alg B_2$.
		\end{itemize}
	\end{enumerate}
\end{proposition}
\begin{proof}
It is easy to check that all the structures in points (1)--(4) are subalgebras of $\alg B \boxplus_\beta \alg C$. Conversely, let $\alg S$ be a subalgebra of $\alg B \boxplus_\beta \alg C$. We can distinguish the following cases: 
\begin{enumerate}
	\item if $S \subseteq C$, then $\alg S$ is a subalgebra of $\alg C$ and we get case (1). 
	\item If $S \subseteq B$, then $\alg S$ is a total algebra whose domain is contained in $B$, getting case (2). 
	\item If $S$ intersects nontrivially both $B$ and $C$, then for any $x \in S \cap B$, we get that $\sr(x) = xc$, $\sell(x) = cx$, $\gell(x) = c \backslash x$, and $\gr(x) = x / c$ are all in $S$. If all such divisions are in $B$, we get case (4). Otherwise, if there exists $x, y \in S \cap B$ such that either $x \backslash y$ or $y / x$ end up in the coatom $c_\alg C \in \alg C$, then $c_\alg C \in S$ and we get case (3).
\end{enumerate}

Note that these are all the possible cases for $\alg S$ to be a subalgebra of $\alg B \boxplus_{\beta}  \alg C$, and this concludes the proof.  
\end{proof}

We are now going to describe the ultrapowers of a blockwise gluing $\alg B \boxplus_\beta \alg C$.
We denote by $\vv{P_u}(\alg B)\boxplus \vv{P_u}(\alg C)$ the class of algebras which are blockwise gluings of an algebra in $\vv{P_u}(\alg B)$\footnote{It makes sense to consider ultrapowers of partial IRLs, defined in the obvious way (see for instance \cite{Burmeister}).} and an algebra in $\vv{P_u}(\alg C)$.
\begin{proposition}
	$\vv{P_u}(\alg B \boxplus_\beta \alg C)\subseteq \vv{P_u}(\alg B)\boxplus \vv{P_u}(\alg C)$.
\end{proposition}
\begin{proof}
	Let $\alg A = \prod_{j \in J} \alg B \boxplus_\beta \alg C$ and let $ U$ be an ultrafilter on $J$; moreover, let $\alg A_U=\alg A / U$. We prove that $\alg A_U$ belongs to $\vv{P_u}(\alg B)\boxplus \vv{P_u}(\alg C)$. 
	
	For $x=(x_j)_{j \in J}$ in $\alg A$, and for each $j \in J$, either $x_j \in B$ or $x_j \in C$. Let us then define the following two sets:
	\begin{align*}
		J_B(x)=\{ j \in J :  x_j \in B - \{1\}\} \qquad J_C(x)=\{ j \in J : x_j \in C \}.
	\end{align*}
	Note that $J_B(x)$ and $J_C(x)$ partition $J$, hence exactly one of the two sets belongs to $U$ for each $x \in A$. Moreover, if $[x]_U=[y]_U$ then the corresponding sets are either both in $U$ or not in $U$. This allows us to define the following two sets:
	$$
	B_U= \{[x]_U : J_B(x) \in U \}\cup \{[1]\} \qquad C_U=\{[x]_U : J_C(x)\in U \}.
	$$
	Note that $C_U$, with the inherited operations, is an $\vv{IRL}$, while $B_U$, with the inherited operations, is a partial $\vv{IRL}$; precisely, note that $\alg C_U$ is a subalgebra of $\alg A_U$ while $\alg B_U$ is a partial subalgebra of $\alg A_U$. Let $\beta_U=(\sell_U, \sr_U, \gell_U, \gr_U)$, with $\alpha_U[x]_U = [x^\alpha]_U$ where:
	\begin{align*}
	x^\alpha_i &= \left\{\begin{array}{ll}
\alpha(x_i) &\mbox{ if } i \in J_B(x) \vspace{0.1cm}\\
x_i &\mbox{ otherwise } \vspace{0.1cm}
\end{array}\right.
\end{align*}
and $\alpha \in \{\sell, \sr, \gell, \gr \}$. In words, $\alpha_U[x]_U$ is the equivalence class of an element whose $i$-th component is exactly $\alpha(x_i)$ for all indexes $i$ for which the $\alpha$-image is defined. We proceed to show that $\alg A_U$ coincides with $\alg B_U \boxplus_{\beta_U} \alg C_U$. Note that $B_U\cup C_U = A_U$ and $B_U \cap C_U = \{[1]\}$.

 Let us prove that $\beta_U$ is well defined. First, consider any $\alpha_U \in \{\sell_U, \sr_U, \gell_U, \gr_U \}$, then $\alpha_U: B_U \to B_U$ since 
 $$J_B(x^\alpha)=\{ i \in I : x^\alpha_i \in B_i \}=J_B(x).$$
  Let now $[x]_U, [y]_U \in B_U$ and suppose that $[x]_U=[y]_U$, we verify that $\alpha_U[x]=\alpha_U[y]$. Since $[x]_U=[y]_U$, $\{i \in I : x_i=y_i \} \in U$. Note that for any $i \in I$ such that $x_i=y_i$, $x^\alpha_i=y^\alpha_i$ which implies that 
  $$\{i \in I : x_i = y_i \} \subseteq \{ j \in I : x^\alpha_j=y^\alpha_j \}$$ and so since ultrafilters are closed upwards, $\{ j \in I : x^\alpha_j=y^\alpha_j \} \in U$ getting that $\alpha_U[x]=\alpha_U[y]$ as desired.
  
Let us then show that $(\alg B_U, \beta_U)$ is a lower block, i.e., the properties in Definition \ref{def: lower block} hold. Note that if $[x]_U, [y]_U \in B_U$, then $J_B(x)$ and $J_B(y)$ are in $U$, hence also $J_B(x)\cap J_B(y) \in U$. Therefore, the properties of Definition \ref{def: lower block} apply to all elements in the $i$-th component of the direct product for any $i \in J_B(x)\cap J_B(y)$. With this observation in mind, it is easy to see that $(\alg B_U, \beta_U)$ is a lower block. For instance, let us show the proof of the last property in Definition \ref{def: lower block}. Suppose that $[x]_U, [y]_U \in B_U$ and $[x]_U \backslash [y]_U$ is not defined in $B_U$, we prove that $\sr_U[x]_U\leq [y]_U$ and $[x]_U \not \leq [y]_U$. Note that we have that $[x]_U \backslash [y]_U=[x\backslash y ]_U \in C_U$, i.e. 
$$J_C(x \backslash y)=\{ i \in I : x_i \backslash y_i \in C_i \} \in U.$$ Since $J_B(x), J_B(y)$ and $J_C(x \backslash y) \in U$, also $$K=J_B(x)\cap J_B(y)\cap J_C(x \backslash y) \in U$$ and for any $i \in K$ $x_i, y_i \in B_i $ and $x_i \backslash y_i \in C_i$ which implies that for any $i \in K$, $\alpha(x_i) = x_i^\alpha\leq y_i$, and $x_i \not \leq y_i$. Thus, if $[x]_U\backslash [y]_U$ is not defined in $B_U$, then $\sr_U[x]_U\leq [y]_U$ and $[x]_U \not \leq [y]_U$. The other properties of Definition \ref{def: lower block} can be proven similarly. Hence, the blockwise gluing $\alg B_U \boxplus_{\beta_U} \alg C_U$ is well defined. 

Now, it is straightforward to check that both the elements and the operations among them in $\alg B_U \boxplus_{\beta_U} \alg C_U$ coincide exactly with those of $\alg A_U$. Hence, $\alg A_U=\alg B_U \boxplus_{\beta_U} \alg C_U$. 
	It is also clear that 	$\alg B_U = (\prod _{j \in J} \alg B \big )/ U$ and $\alg C_U= (\prod_{j \in J} \alg C )\big / U$. Indeed, take for instance any $[x]_U \in B_U$, then $J_B(x) \in U$ or $[x]_U=[1]_U$. If $y$ is such that: 
	\begin{align*}
	y_j &=\left\{\begin{array}{ll}
x_j &\mbox{ if } j \in J_B, \vspace{0.1cm}\\
1 &\mbox{ otherwise,} \vspace{0.1cm}
\end{array}\right.
\end{align*}
then clearly $y \in \prod_{j \in J} B$ and $[y]_U=[x]_U$ getting that $[x]_U \in \prod_{j \in J} B/U$ and so $B_U \subseteq \prod_{j \in J} B / U$. The converse inclusion is clear, so $\alg B_U = (\prod _{j \in J} \alg B \big )/ U$. The proof for $\alg C_U= (\prod_{j \in J} \alg C )\big / U$ is completely analogous.

Thus, we can conclude that $\alg A_U \in \vv{P_u}(\alg B)\boxplus \vv{P_u}(\alg C)$ which yields the thesis. 
\end{proof}

\section{Varieties generated by blockwise gluings: axiomatizations}
We now focus our attention on totally ordered algebras obtained by blockwise gluings, and on the axiomatization of the varieties they generate. 
In order for this to make sense, let us first observe that given a residuated chain $\alg A$, there exists a {\em finest} decomposition in terms of blockwise gluings. Let us be more precise.

Given an IRL chain $\alg A$, let us call a family of partial IRLs $\cc D = \{\alg A_{i}\}_{i \in I}$ ordered by a totally ordered index set $\II = (I, \leq)$ a {\em decomposition} of $\alg A$ if $\alg A = \boxplus_{\II}(\alg A_{i}, \betaij)$ for some family of blocks $\{(\alg A_{i}, \betaij)\}_{i <_{\II} j}$ based on $\cc D$.
Let us call an IRL $\alg A$ {\em gluing-indecomposable} if it is not isomorphic to a blockwise gluing of two nontrivial IRLs.

We can define a partial order on the set of decompositions of $\alg A$, $\vv{D}_{\alg A}$, as follows: if $\cc D, \cc D' \in \vv{D}_{\alg A}$, $$\cc D \preceq \cc D' \mbox{ iff for each } \alg B \in \cc D  \mbox{ there is a } \alg B' \in \cc D' \mbox{ such that } B \sse B'.$$
Intuitively, $\cc D \preceq \cc D'$ iff the decomposition $\cc D'$ is finer than $\cc D$. The proof of the fact that there is a maximum element in $(\vv{D}_{\alg A}, \preceq)$ is analogous to the one for 1-sums in \cite[Theorem 14]{Montagnaetal2006}, we adapt it here to blockwise gluings.
\begin{proposition}
	For every IRL chain $\alg A$, there is a finest decomposition in blockwise gluings, i.e., a maximum element in $(\vv{D}_{\alg A}, \preceq)$.
\end{proposition}
\begin{proof}
The proof uses Zorn's Lemma. Consider a chain $\mathscr{C} = \{\cc D_k : k \in  K\}$ in $(\vv{D}_{\alg A}, \preceq)$, we start by showing that it has an upper bound in $(\vv{D}_{\alg A}, \preceq)$. 
Define the following equivalence relation on $A-\{1\}$: for every $a,b \in  A-\{1\}$, $a \equiv b$ iff $a$ and $b$ belong to the same class of $\cc D_k$ for every $k \in  K$; denote by $[a]_\equiv$ the equivalence class of $a$ w.r.t $\equiv$. We prove that $A_\equiv = \{[a]_\equiv \cup 1: a \in A-\{1\}\} \in \vv{D}_{\alg A}$ and it is an upper bound of $\mathscr{C}$. Take $(I, \leq)$ to be a totally ordered index set for $A_\equiv$ (one could take a representative for each equivalence class, ordered as in $\alg A$), and let us refer to the elements of $A_\equiv$ as $A_i$ for $i \in I$. It is easy to check that $\alg A$ belongs to $\BlG$ according to Definition \ref{def:blockwiseclass}. Indeed, the domain of $\alg A$ is the union of the sets $A_i$, for $i \in I$, and the elements of $A_i- \{1\}$ are strictly below the elements of $A_j$ iff $i < j$. Moreover, it is straightforward that each $A_i$ is the domain of a partial IRL $\alg A_i$ with the inherited operations from $\alg A$, where all operations are defined except possibly some divisions (that end up in higher components).
	Finally, if $x, y \in A_i- \{1\}, z \in A_j$ and $j < i$, then $xz = yz$ and $zx = zy$; indeed, if $j < i$, there is one of the decompositions $\cc D_k$ such that $x,y$ belong to the same block (since they are in the same block in any decomposition), while $z$ belongs to a lower block, and then the claim follows. By Proposition \ref{proposition: a in BG}, $\alg A$ can be written as a blockwise gluing of a family of blocks based on  $\{\alg A_i\}_{i \in I}$, thus $\cc A = \{\alg A_i\}_{i \in I} \in \vv{D}_{\alg A}$.

We claim that $\cc A$ is an upper bound for $\mathscr{C}$. Indeed, for any $a \in A-\{1\}$, all the elements of $[a]_\equiv$ are in the same component of $\cc D_k$ for all $k \in K$, which means that $\cc A \preceq \cc D_k$.
Therefore, by Zorn's Lemma, for every $\cc D \in \vv{D}_{\alg A}$, there exist a maximal decomposition $\cc M \in \vv{D}_{\alg A}$ such that $\cc D \preceq \cc M$. 

Finally, we prove that there is only one maximum decomposition.
Suppose that $\cc M_1, \cc M_2 \in \vv{D}_{\alg A}$ are two different maximal elements. Then there is $\alg B \in \cc M_1$ whose domain is not included in any element of $\cc M_2$. Moreover, by the maximality of $\cc M_1$, $\alg B$ is indecomposable, so in particular it is not a union of elements of $\cc M_2$. Hence, there is $\alg B' \in \cc M_2$ such that $B \cap B' \neq \emptyset$ and $B' \nsubseteq B$. But then one could further refine $\cc M_2$ by splitting $B$ in $B \cap B'$ and $B-B'$, a contradiction. The thesis follows.
\end{proof}
\begin{notation}\label{notation1}
	From now on, if we consider an IRL chain $\alg A$ as $\alg A = \boxplus_{\bf I} \alg A_i$ with no further description, we mean that the decomposition given by the family $\{\alg A_i\}_{i \in I}$ is the maximum one. Moreover, for a finite index set of cardinality $k \in \mathbb{N}, k \geq 1$, we may write $\alg A = \boxplus_{\bf k} \alg A_i$ to denote the ordered index set given by $1 < \ldots < k$.
\end{notation}
A particularly interesting case, which we will analyze in the rest of this section, is the maximum decomposition being given by the archimedean components.

Let $\alg A$ be an IRL chain. Given $a,b \in A$, we write $a \sim b$ if $a$ and $b$ are in the same archimedean component, that is to say, if either $a^n\leq b \leq a$ or $b^n \leq a \leq b$ for some $n \in \mathbb{N}$. It can be easily shown that $\sim$ is an equivalence relation; its equivalence classes are the archimedean classes. Given $a \in A$, we write $\arc a$ for its archimedean class. Note that one can associate a totally ordered index set $(I, \leq )$, by picking an element $a_i$ from each equivalence class, and let $A_i={\arc {a_i}} \cup \{1\}$. Observe that $\alg A_i$ with the restricted operations from $\alg A$ is always a partial IRL, only missing those divisions which end up in some $A_j$ with $j>i$.

Notice that, in general, decomposing a chain in its archimedean component does not yield a blockwise gluing. However, when it does, it yields the finest decomposition.
\begin{corollary}
	Let $\alg A$ be an IRL-chain. If the partition given by the archimedean classes gives a decomposition as a blockwise gluing, then it is the maximum one.
\end{corollary}
\begin{proof}
	The proof follows from the fact that the components of a blockwise gluing are closed under products.
\end{proof}
Given this fact, we now focus on chains that are blockwise gluings of their archimedean components, and the varieties they generate.
The next subsections show that with this restriction, one can obtain finite axiomatizations for varieties generated in a natural way by blockwise gluings. 

The reader may observe that tackling a more general, unrestricted, scenario is out of reach. For comparison, we do not even have techniques to axiomatize varieties generated by 1-sums of chains, if one removes the conditions of divisibility \cite{AM2003} or some weaker forms of it \cite{GalatosUgolinimanuscript}.

 \subsection{$n$-potent gluings of archimedean components}\label{subsection:axiom-npotent}
Let us call an IRL chain that is a blockwise gluing of its archimedean components an {\em archimedean gluing}, and let 
\begin{equation*}
	\BlockG = \{\alg A \in \vv{IRL_c}: \alg A \mbox{ is an archimedean gluing}\}.
\end{equation*}

In this subsection we axiomatize, for all $n \in \mathbb{N}$, the $n$-potent varieties generated by chains which are archimedean gluings, and we show that they all have the FEP. Moreover, we show that their join is the variety generated by all chains that are archimedean gluings, which has the FMP.
 
Let us denote with $\vv{IRL_c}$ the class of IRL-chains. We define the class of algebras:
$$
\BlockG_{n}=\{\alg A \in \vv{IRL_c} : (x^n=x^{n+1} > y^n) \mbox{ implies } (xy=x^ny \mbox{ and } yx=yx^n) \}.
$$
\begin{lemma}\label{lemma: blockwise gluing of archim comp}
	If $\alg A \in \BlockG_{n}$ then $\alg A$ is an archimedean gluing. 
\end{lemma}
\begin{proof}
It is easy to see that if $\alg A \in \BlockG_{n}$ then it is a blockwise gluing of its archimedean classes, according to Definition \ref{def:blockwiseclass}. Indeed, if one considers $x, y, z$ with $\arc x < \arc y = \arc z$, this means that $y^n=z^n > x^n$, and so by definition of $\BlockG_{n}$, $xy=xz$ and $yx=zx$.
\end{proof}
 Let now $\BG_{n}$ be the variety axiomatized relatively to $n$-potent $\vv{SemIRL}$ by: 
\begin{equation*} \label{eq: equation Bn}
  	(x^n \backslash y^n) \join (((xy\backslash x^ny)\meet (yx \backslash yx^n)))=1
  	\tag{block$_n$}
 \end{equation*} 
 
 The goal is to show that $\vv V(\BlockG_{n})=\BG_{n}$. 
 \begin{lemma}\label{lemma: equation holding in Bn}
 	Let $\alg A$ be an $n$-potent IRL-chain, then $\alg A \models$ {\rm (\ref{eq: equation Bn})} iff $\alg A \in \BlockG_{n}$. 
 \end{lemma}
 \begin{proof}
 	Suppose that $\alg A \in \BlockG_{n}$ and let $x,y \in A$. If $x^n\leq y^n$, then $(x^n \backslash y^n)=1$; if $x^n > y^n$,  $xy=x^ny$ and $yx=yx^n$, therefore both $xy \backslash x^ny $ and $yx \backslash yx^n$ are equal to $1$. Hence (\ref{eq: equation Bn}) holds in $\alg A$. 
 	
 	 	Vice versa, suppose that $\alg A \not \in \BlockG_{n}$, then there exists $x,y \in \alg A$ such that $x^n > y^n$ but $xy > x^ny$ or $yx > yx^n$. Hence both $x^n \backslash y^n$ and $(xy \backslash x^ny)\meet (yx \backslash yx^n)$ are strictly less than $1$, meaning that $\alg A \nvDash$ (\ref{eq: equation Bn}).	
 \end{proof}
 As a consequence of the previous lemma, 
 \begin{proposition}
 	$\BG_{n}=\vv V(\BlockG_n)$.
 \end{proposition}
 \begin{proof}
 	$\BG_{n}$ is generated by the class of its chains, i.e. $\BG_{n}=\vv V((\BG_{n})_{\vv c})$. By Lemma \ref{lemma: equation holding in Bn}, $(\BG_{n})_{\vv c}=\BlockG_{n}$; hence $\BG_{n}=\vv V(\BlockG_{n})$.
 \end{proof}
 Thus, $\BG_n$ is the variety whose chains are exactly the $n$-potent archimedean gluings. In the following lemma we demonstrate that, in order to axiomatize $\BG_n$, one can equivalently consider the following variation of the above axiom {\rm (\ref{eq: equation Bn})}, which will be used in some proofs:
 \begin{equation*} \label{eq: equation 2Bn}
  	(x^n \backslash y^n) \join (((xy\backslash x^2y)\meet (yx \backslash yx^2)))=1
  	\tag{block$_n^2$}. 
  	 \end{equation*} 

 \begin{lemma}\label{rem: semplification axioms}
Let $\alg A$ be an $n$-potent IRL-chain, then $\alg A \models$ {\rm (\ref{eq: equation 2Bn})} iff $\alg A \models$ {\rm (\ref{eq: equation Bn})}.
 \end{lemma}
 \begin{proof}
 Let $\alg A$ be an $n$-potent IRL chain.
 	For the right-to-left direction, assume that $\alg A \models$ (\ref{eq: equation Bn}). Let $x, y \in A$, then if $x^n \leq y^n$, $x^n \backslash y^n=1$. If $y^n < x^n$, then $x \cdot y= x^n \cdot y$ and $y \cdot x = y \cdot x^n$, hence, $$x^2 \cdot y = x \cdot x \cdot y = x \cdot x^n \cdot y = x^{n+1}\cdot y = x^n \cdot y= x \cdot y$$ and similarly $y \cdot x^2 = y \cdot x$. In either case, $(x^n \backslash y^n) \join (((xy\backslash x^2y)\meet (yx \backslash yx^2)))=1$, hence $\alg A \models$ (\ref{eq: equation 2Bn}). 
 	
  Vice versa, let $\alg A \models$ (\ref{eq: equation 2Bn}) and let $x, y \in A$. If $x^n \leq y^n$, $x^n \backslash y^n=1$. If $y^n < x^n$, then $x^2y=xy$ and $yx^2=yx$. Thus, one gets the following chains of equalities:
$$
x y = x^2 y= \dots = x^{n-1}  y = x^n y \qquad y \cdot x= y \cdot x^2= \dots = y \cdot x^{n-1} =y \cdot x^n.
$$
Hence, in both cases $(x^n \backslash y^n) \join (((xy\backslash x^ny)\meet (yx \backslash yx^n)))=1$, which means that $\alg A \models $ (\ref{eq: equation Bn}).
 \end{proof}

We observe that the varieties $\BG_n$ form a denumerable chain ordered by inclusion.
\begin{lemma}
	For any $n \in \mathbb{N}$ with $n \geq 1$, $\BG_{n} \subseteq \BG_{n+1}$.
\end{lemma}
\begin{proof}
	Let $\alg A \in \BG_{n}$, then $\alg A \models (x^n \backslash y^n) \join ((x(x \meet y) \backslash x^n(x \meet y))\meet ((x \meet y)x \backslash (x \meet y)x^n))$. Moreover, $\alg A$ is $n$-potent for any $x \in \alg A$, i.e. $x^n=x^{n+1}$ and so $\alg A \models (x^{n+1} \backslash y^{n+1}) \join ((x(x \meet y) \backslash x^{n+1}(x \meet y))\meet ((x \meet y)x \backslash (x \meet y)x^{n+1}))$, meaning that $\alg A \in \vv \BG_{n+1}$.
\end{proof}
It is then natural to wonder about the union of these hierarchy of varieties. We proceed to prove that the join of all the varieties $\BG_{n}$, so the variety generated by their union $\bigvee_{n \in \mathbb N} \BG_{n}$, is exactly the variety generated by chains which are archimedean gluings. Moreover, the latter is strictly contained in the variety of all semilinear IRLs.

In order to do so, we will see that the residuated frame construction described in the preliminaries preserves the decomposition in archimedean gluings. For the sake of the reader, we recall only the information that is going to be useful for the following proofs, while more details are given in the preliminary section.

Given an IRL chain $\alg A$ and a partial subalgebra $\alg B$, it is possible to construct a finite IRL chain $\alg W_{\alg A, \alg B}^+ = (\gamma_N[\mathcal{P}(W)], \cap, \cup_{\gamma_N}, \circ_{\gamma_N}, \backslash, /)$ into which $\alg B$ embeds (Proposition \ref{prop:galatosjipsen} and Lemma \ref{lemma: finite chain}). In particular, this chain is constructed using: 
\begin{itemize}
	\item $(W, \cdot, 1)$ as the submonoid of $\alg A$ generated by $\alg B$; 
	\item the sections $S_W$ on $W$, i.e. the maps of the kind $u(x)=v \cdot x \cdot w$ for $v, w \in W$, and then $W' = S_W \times B$; 
	\item the nuclear relation $N \sse W \times W'$ defined by $x N (u,b)$ iff $u(x)\leq_{\alg A} b.$ 
\end{itemize}
To the nuclear relation one associates the operator $\gamma_N$, which is a nucleus on $(\mathcal{P}(W), \circ)$, with $\circ$ being the complex product. Then the domain $\gamma_N(\mathcal{P}(W))$ is given by the following downsets of $W \cap A$:
$$\{(u,b)\}^\triangleleft = \{w \in W : u(w)\leq_{\alg A} b\}.$$
For the reader's convenience, we also recall the product operation:
$$X\circ_{\gamma_N} Y = \gamma_N(X \circ Y).$$
Before we proceed, a technical lemma. In what follows, for $m \geq 1$, we write $Y \circ^m Y$ for the complex product $Y \circ \ldots \circ Y$ of $Y$ with itself $m$ times (so in particular $Y \circ^1 Y = Y$).

 \begin{lemma}\label{lemma: powers of sets}
Let $Y \in \gamma_N(\mathcal{P}(W))$, then $Y^m=\gamma_N(Y \circ^m Y)$.
\end{lemma}
\begin{proof}
	We prove the claim by induction on $m$. If $m =1$, then $Y = \gamma_N(Y)$, since $Y \in \gamma_N(\mathcal{P}(W))$ and $\gamma_N$ is a nucleus hence idempotent. 
	
	Suppose now that the claim holds for $m-1$, let us prove that it holds for $m$. 
	\begin{align*}
	Y^m &= Y \circ_{\gamma_N} Y^{m-1}\\
	&=Y \circ_{\gamma_N} \; \gamma_N(Y \circ^{m-1}Y)\\
	&= \gamma_N(Y \circ \; \gamma_N(Y \circ^{m-1}Y))\\
	&= \gamma_N (\gamma_N(Y) \circ \;\gamma_N(Y \circ^{m-1}Y))\\
	&=\gamma_N(Y \circ (Y \circ^{m-1} Y))\\
	&= \gamma_N(Y \circ^{m}Y)
	\end{align*}
	where the second equality holds by inductive hypothesis, the third by definition of $\circ_{\gamma_N}$, the fourth since $Y \in \gamma_N(\mathcal{P}(W))$, and the second to last one follows from the fact that $\gamma_N$ is a nucleus on $(\gamma_N(\mathcal{P}(W)), \circ)$. 
\end{proof}

We are now ready to prove that the presented residuated frame construction applied to IRL chains preserves the blockwise gluing decomposition in archimedean components. 
Given a class of algebras $\vv K$ let $\vv K_{\rm fin}$ be the class of its finite members. 
The next lemma shows that $\BlockG$, the class of IRL-chains that are archimedean gluings, has the FEP.

\begin{lemma}\label{theorem: preparation to FMP}
Let $\alg A \in \BlockG$ and let $\alg B$ be any finite partial subalgebra of $\alg A$, then $\alg W_{\alg A, \alg B}^+\in (\BlockG)_{\rm fin}$. 
\end{lemma}
\begin{proof}
	Let $\alg A \in \BlockG$ and let $\alg B$ be a finite partial subalgebra of $\alg A$. As shown in Lemma \ref{lemma: finite chain}, $\alg W_{\alg A, \alg B}^+$ is a finite chain. Then the goal is to show that it is in $\BG_{n}$ for some $n \in \mathbb{N}$. Since it is finite, it is $n$-potent for some $n \in \mathbb{N}$; hence, by Lemma \ref{rem: semplification axioms}, it suffices to show that if $X^n \supset Z$ then $X^2 Z= X Z$ and $Z X^2= Z X$, for any $X, Z \in W_{\alg A, \alg B}^+$. We prove the former, since the latter is similar.
	
	 Let then $$X=\{w \in W : u (w ) \leq_{\alg A} d\}, \;\;Z=\{w \in W : u' (w )\leq_{\alg A} d'\}$$  for some $u, u' \in W'$ and $ d, d' \in B$. Hence $XZ=\gamma_N(\{ab : a \in X, b \in Z\})$. Observe that by order preservation $X^2Z \subseteq XZ$. Hence, it suffices to show the converse inclusion. To do so it is enough to check that $$\{ab : a \in X \mbox{ and } b \in Z \} \subseteq \{cb : c \in X^2 \mbox{ and } b \in Z \},$$ i.e., for any $ a \in X \mbox{ and } b \in Z $, there exists a $c \in X^2$ such that $ab=cb$. Let us then fix $a \in X$ and $b \in Z$.

 If $a \in X^2 \subseteq X$, then we take $c=a$. Thus, the only case left to consider is when $a \in X - X^2$. We set $c=a^2$, then $a,a^2 \in W \subseteq A$. So if one shows that $b$ is in a lower component than $a$ in $\alg A$, since $\alg A$ is a blockwise gluing of its archimedean components, $ab=a^2b=cb \in X^2 Z$ which would complete the proof. 
 
 Recall that $\arc x$ denotes the archimedean component of an element $x$. Suppose by way of contradiction that $\arc b$ is above or equal to $\arc a$, then there exists $m \in \mathbb{N}$ such that $a^m \leq_{\alg A} b$. Observe that for any $x \in X^2$, $x <_{\alg A} a$; indeed  if $a \leq x$, since $X^2$ is a downset, $a \in X^2$ contradicting the fact that $a \in X- X^2$. Thus, for any $x \in X^2$, $x^m \leq_{\alg A} a^m \leq_{\alg A} b$ meaning that 
 $$X^2 \circ^m X^2\;\subseteq\; {\downarrow }b.$$
  Indeed, let $x_1, \dots ,x_m \in X^2$, then 
  $$x_1 \cdot \ldots \cdot x_m\leq_{\alg A }(\max\{x_i\}_{i=1, \dots ,m})^m\leq_{\alg A} a^m \leq_{\alg A} b.$$
   Now, ${\downarrow} b \subseteq Z$, hence $X^2 \circ^m X^2\subseteq Z$ and so, 
   $$X^{2m}=\gamma_N(X \circ^{2m} X)=\gamma_N(X^2 \circ^m X^2) \subseteq \gamma_N(Z)=Z,$$ 
   where the first equality follows from Lemma \ref{lemma: powers of sets}, the inclusion from the fact that $\gamma_N$ is a nucleus hence order preserving, and the last since $\gamma_N(Z)=Z$ given that $Z$ is an element of the domain. Now, since $\alg W_{\alg A, \alg B}^+$ is $n$-potent, necessarily $X^n \subseteq X^{2m}\subseteq Z$ which contradicts the hypothesis. 
   
   Hence, $\arc a$ is strictly above $\arc b$ in $\alg A$, thus $ab = a^2b = cb$. We proved that if $X^n \supset Z$, then $X^2Z=XZ$ as desired. Similarly, $ZX^2=ZX$, meaning that $\alg W_{\alg A, \alg B}^+ $ is a finite chain in $\BG_{n}$ for some $n \in \mathbb{N}$ and so $\alg W_{\alg A, \alg B}^+ \in (\BlockG)_{\rm fin}$.
\end{proof}
The above lemma has several interesting consequence. Firstly, let us call $\BG$ the variety generated by $\BlockG$, $\BG = \vv V (\BlockG)$.
\begin{theorem}\label{corollary: FMP for V(BG)}	
$\BG$ has the finite model property.
\end{theorem}
\begin{proof}
	Suppose $\varepsilon$ is an equation failing in $\BG$, then there exists a chain $\alg A \in \BlockG$ such that $\alg A \nvDash \varepsilon$. Note that $\varepsilon$ has finitely many variables $x_1, \dots , x_n$. Let $a_1, \dots a_n$ be the assignment of the variables witnessing the failure of $\varepsilon$ in $\alg A$. Moreover, let $\Phi$ be the set of subterms appearing in $\varepsilon$, which is finite. We define $B=\{\phi_i(a_1, \dots, a_n) : \phi_i \in \Phi \}$. Take $\alg B$ as the partial subalgebra of $\alg A$ having as universe $B \cup \{1\}$. Since $\alg B$ is finite, by the previous theorem it embeds into a finite algebra in $\BlockG$ which does not satisfy $\varepsilon$. Thus, $\BG$ has the finite model property.
\end{proof}

Also, with a slight adaptation of the proof of Lemma \ref{theorem: preparation to FMP} one can prove that any partial subalgebra of a chain in $\BG_n$ can be embedded into a finite $\BG_n$-chain. Hence:
\begin{theorem}\label{corollary: FEP e FMP for Bn}
	$\BG_{n}$ has the finite embeddability property. 
\end{theorem}
We are ready to state the preannounced result that $\BG=\bigvee_{n \in \mathbb{N}}\BG_{n}$, which is the variety generated by the union of all $\BG_{n}$. 
\begin{theorem}\label{lemma: partial gluing of archimedean components}
	$\BG= \bigvee_{n \in \mathbb{N}}\BG_{n}$.
\end{theorem}
\begin{proof}
Since all chains in each $\BG_n$ are in $\BG$, we get that ${\BG} \supseteq \bigvee_{n \in \mathbb{N}}\BG_{n}$. 

For the left-to-right inclusion, we want to prove that if an equation holds  in $\bigvee_{n \in \mathbb{N}}\BG_{n}$, it holds in $\BG$; equivalently, if an equation fails in $\BG$, it fails in $\bigvee_{n \in \mathbb{N}}\BG_{n}$. The latter is an immediate consequence of Lemma \ref{theorem: preparation to FMP}.
\end{proof}

It is important to notice that, unlike what happens in each $\BG_n$, in $\BG$ not all chains are archimedean gluings, despite the latter being the generators of the variety. The key observation is that archimedeanity is not a first-order property, and this results in the fact that the class $\BlockG$ is not closed under ultraproducts. Let us provide an example of a chain in $\BG$ that is not an archimedean gluing.
\begin{example}
	The variety of MV-algebras, $\vv{MV}$, is contained in $\BG$. Indeed, $\vv{MV}$ is generated by its finite chains, which are all simple, hence, trivially, archimedean gluings. Not all MV-chains, however, are blockwise gluings of their archimedean components. An example of this is Chang's MV-algebra, with domain $$C= \{1, c, c^2, \ldots c^n \ldots \neg c^n, \ldots, \neg c^2, \neg c, 0\},$$
where $C^+ = \{1, c, c^2, \ldots c^n \ldots \neg c^n\}$ is a copy of the negative cone of the lattice-ordered group of the integers $\alg Z^-$, the elements in $C^- = \{\ldots \neg c^n, \ldots, \neg c^2, \neg c, 0\}$ all multiply to $0$, and $c^n \cdot \neg c^m = \neg (c^{m - n})$ if $m > n$ and $0$ otherwise. Here, $C^-$ and $C^+$ are two different archimedean classes, and if $m < n$ 
$$c^n \cdot \neg c^m = \neg (c^{m - n}) > 0 = c^m \cdot \neg c^m,$$
hence Chang's MV-algebra is not an archimedean gluing, but it belongs to $\BG$ (in fact, it can be obtained as an ultraproduct of the finite MV-chains).
\end{example}
One might then wonder whether all IRL-chains end up in the variety $\BG$, but this does not hold.
To show this, we first observe that all algebras in $\BG$ validate all of the following formulas, for any $n \geq 1$:
\begin{equation*}
	(x^n=x^{n+1}~\&~y \leq x^n) \Rightarrow (xy=x^ny ~\&~ yx=yx^n), \tag{$\beta_n$}
\end{equation*}
where $\&$ and $\Rightarrow$ are first-order conjunction and implication respectively. Note that each $\beta_n$ is equivalent to the two quasi-equations: $(x^n=x^{n+1}~\&~y \leq x^n) \Rightarrow (xy=x^ny)$ and $(x^n=x^{n+1}~\&~y \leq x^n) \Rightarrow (yx=yx^n)$.
\begin{lemma} \label{lemma: equations ast}
	$\BG\models (\beta_n)$ for all $n \geq 1$.
\end{lemma}
\begin{proof}
Since $\BG = \vv V(\BlockG)$, by J\'onsson Lemma, all the subdirectly irreducible algebras are in $\mathsf{HSP}_u(\BlockG)$. Thus it suffices to check that each $(\beta_n)$ holds in all algebras in $\mathsf{HSP}_u(\BlockG)$. 

Fix any $n \in \mathbb{N}$. It is straightfoward that for all $\alg A \in \BlockG$, $\alg A \models (\beta_n)$, and given any $\alg B \in \mathsf{ISP}_u(\BlockG)$, also $\alg B \models (\beta_n)$ since quasiequations are preserved by $\mathsf{ISP}_u$, and $(\beta_n)$ is equivalent to two quasiequations. Thus it is left to show that if  $\alg A \in \mathsf{H}(\alg B)$ with $\alg B \in \mathsf{ISP}_u(\BlockG)$, then $\alg A \models (\beta_n)$.

Since $\alg A \in \mathsf{H}(\alg B)$, then $\alg A \cong \alg B / F$ for some congruence filter $F$. 
Let $[a]$ and $[b]$ in $B/F$ such that $[a]^n=[a]^{n+1}$ and $[b] \leq [a]^n$. If $[a] = [1]$ then ($\beta_n$) holds with the assignment $x \mapsto [a]$ and $y \mapsto [b]$, so assume $[a] \neq [1]$. We observe that in $\alg B$, $a^{n + 1} = a^{n+2}$. Indeed, suppose by way of contradiction that $a^{n+1}> a^{n+2}$.  Since $[a^n]=[a^{n+2}]$, $a^n \backslash a^{n+2} \in F$. Moreover,
since $a^2 a^n = a^{n+2}$ but $a a^n = a^{n+1} > a^{n+2}$ by the absurdum hypothesis, we get that
 $$a^{2}\leq a^n\backslash a^{n+2}<a.$$ Since $a^n \backslash a^{n+2} \in F$, we get that also $a \in F$ and then $[a] = [1]$, a contradiction. Hence $a^{n + 1} = a^{n+2}$ in $\alg B$.
 
 Moreover, notice that if $[b] < [a]^{n+1}$, necessarily $b < a^{n +1}$ in $\alg B$, since quotients preserve the order, and then $a b = a^{n+1}b$ and $ba = ba^{n+1}$.  
Therefore: $$[a] \cdot [b] = [a \cdot b]=[a^{n+1} \cdot b]= [a]^{n+1} \cdot [b] = [a]^n [b]$$ and similarly $[b] \cdot [a] =[b] \cdot [a]^n$. We can conclude that $\alg A \models (\beta_n)$. 

Hence, all subdirectly irreducible algebras in $\bigvee_{n \in \mathbb{N}} \vv{B_n}$ satisfy $(\beta_n)$, which implies the thesis.
\end{proof}
However, the formulas $\beta_n$ do not hold in all semilinear IRL chains. Hence:
\begin{theorem}\label{theorem: not all SemIRL}
	$\BG \subsetneq \vv{SemIRL}$.
\end{theorem}
\begin{proof}
	We can find a (finite) IRL chain that does not satisfy ($\beta_2$), and hence, by Lemma \ref{lemma: equations ast}, it does not belong to $\BG$. Let us consider the IRL chain $\alg A$ uniquely determined by the following stipulations:
\begin{center}
	\begin{tikzpicture}
		\node[right] (A) at (2,0) {$1$};
		\filldraw (2,0) circle (1pt);
		\filldraw (2, -0.5) circle (1pt);
		\node [right] (B) at (2,-0.5) {$a$};
		\filldraw (2, -1) circle (1pt);
		\node [right] (C) at (2,-1) {$a^2=a^3$};
		\filldraw (2, -1.5) circle (1pt);
		\node [right] (D) at (2,-1.5) {$y$};
		\filldraw (2, -2) circle (1pt);
		\node [right] (E) at (2,-2) {$z=a \cdot y$};
		\filldraw (2, -2.5) circle (1pt);
		\node [right] (F) at (2,-2.5) {$y^2=a^2 \cdot y=y \cdot a= y \cdot a^2$};
	\end{tikzpicture}
\end{center}
	Since $y < a^2=a^3$ but $a \cdot y \neq a^2 \cdot y$, $\alg A \nvDash (\beta_2)$ which implies that $\alg A \not \in \BG$. Hence, we found a finite chain belonging to $\vv{SemIRL}$ but not to $\BG$ which gives us the thesis.
	\end{proof}
We observe that all the results in this section can be obtained also if one considers commutative structures. In particular, for $n \geq 1$, let $\vv{C}\BG_n$ be the commutative subvariety of $\BG_n$:
\begin{corollary}
	$\vv C\BG_n$ is the variety generated by archimedean gluings of $n$-potent commutative chains, and it has the finite embeddability property, for all $n \geq 1$.
	The join $\bigvee_{n \in \mathbb{N}} \vv C\BG_n$ is generated by all archimedean gluings of commutative integral chains, and it is strictly contained in $\vv{SemCIRL}$.
\end{corollary}
\subsection{Axiomatizations of subvarieties via archimedean gluings}\label{subsection:axioms-subvar}
In order to gain a better understanding of the subvariety lattice of $\BG$, we show how one can obtain finite axiomatizations for certain varieties arising naturally from finite archimedean gluings of simple IRL-chains. It is worth observing that not all subvarieties of $\BG$ are finitely based. In fact, there are already non-finitely based subvarieties of basic hoops $\vv{BH}$ (\cite[Example 7.2]{AM2003} shows this in the $0$-bounded case, but it can be straightforwardly adapted), which is a proper subvariety of $\BG$. Indeed, $\vv{BH}$ is generated for instance by the 1-sum of $\omega$ copies of the $0$-free reduct of the standard MV-algebra \cite[Theorem 4.8]{AM2003}, which can be seen as an archimedean gluing. Moreover, there are uncountably many subvarieties of $\vv{BH}$ (see the analogous $0$-bounded case in \cite[Theorem 4.12]{AM2003}, the same proof works for basic hoops), hence also of $\BG$. 

The results in this section take inspiration from the axiomatizations of semilinear varieties generated by 1-sums in \cite{AM2003,GalatosUgolinimanuscript}. In contrast to these works, we here consider the diverse class of constructions given by gluings, hence we do not need the assumption of divisibility or its weakenings, and we do not assume commutativity. However, we do need to restrict to $n$-potent varieties.

We are able to prove in our context the equivalents to the main axiomatization results in \cite{AM2003,GalatosUgolinimanuscript}. In particular, we can:
\begin{enumerate}
	\item define equations that characterize the varieties whose subdirectly irreducible algebras have a bounded number of components;
	\item under certain assumptions, given varieties $\vv V_1, \ldots, \vv V_m$, we can axiomatize the variety generated by archimedean gluings of the kind $\alg A_1 \boxplus \dots \boxplus \alg A_m$ for $\alg A_i\in \vv V_i$.
\end{enumerate} 

Let us start by showing that it is possible to describe equationally the cardinality of the set $I$ over which one decomposes in blockwise gluings of the archimedean components. Recall that by $(\BG_n)_{\vv c}$ we mean the class of chains in the variety $\BG_{n}$. Let then $\alg A = \boxplus_{\alg I}\alg A_i$ for $\alg A \in (\BG_n)_{\vv c}$. We write $a \ll b$ if and only if $a$ is in a lower component than $b$, i.e. 
$$
a \ll b \mbox{ if and only if } a \in A_i-\{1\} \mbox{ and } b \in A_j \mbox{ where } i <j.
$$
The following technical lemma is the key to ``talk about'' the different components in the gluing.
\begin{lemma} \label{lemma: definition of parts of lamndak}Let $\alg A$ be a chain in $\BG_n$, $\alg A = \boxplus_{\alg I} \alg A_i$. The following holds: 
\begin{enumerate}
	\item $(x^n \backslash y ^n)^n \backslash (x \join y)=1$ if and only if $x \gg y$ or at least one between $x$ and $y$ is $1$; 
	\item $(x^n \backslash y ^n)^n \backslash (x \join y)=x \join y$ if and only if $x \ll y$, $x^n=y^n$, or $x = 1$.
\end{enumerate}	
\end{lemma}
\begin{proof}
	(1) For the right to left direction, notice first that if one between $x$ and $y$ is $1$, then the equation trivially holds. While if $x \gg y$, since $\alg A \in (\BG_{n})_{\vv c}$, $x^n \backslash y^n \in \arc {y^n}$ hence $(x^n \backslash y^n)^n=y^n$, and so $(x^n \backslash y^n)^n < (x \join y)=x$, implying the thesis. For the left to right direction, we argue by contraposition. Suppose that none between $x$ and $y$ is $1$, and that $x^n=y^n$ or $x \ll y$. Then, $$(x^n \backslash y ^n)^n \backslash (x \join y)= 1 \backslash (x \join y)= (x \join y) < 1,$$ getting the thesis.

	(2) For the right to left direction, if $x^n=y^n$ or $x \ll y$ then $x^n \backslash y^n=1$ hence the identity becomes $x \lor y = x \lor y$. If $x = 1$ we get $1 = 1$. 
	For the left to right one, we argue by contraposition. Suppose that $x \gg y$ with $x \neq 1$. Then, by point (1), $$(x^n \backslash y ^n)^n \backslash (x \join y)=1 > x \join y,$$ proving the thesis. 
	\end{proof}
The idea now is to use the above lemma to find an equation $\lambda_k^n$ such that a subvariety $\vv V $ of $\BG_{n}$ satisfies it iff every chain in $\vv V$ is an archimedean gluing of at most $k$ components. Let $k \in \mathbb{N}$ with $k > 0$, we set:
\begin{equation*}\tag{$\lambda_k^n$}
		\bigwedge_{i=0}^{k-1} (x_{i+1}^n \backslash x_i^n)^n \backslash (x_{i+1} \vee x_i)\leq \bigvee_{i=0}^{k}x_i.
	\end{equation*}
	
\begin{lemma}\label{lemma: axiomatization via lambdak}
	Let $\alg A$ be a chain in $\BG_n$, $\alg A = \boxplus_{\alg I} \alg A_i$. $\alg A \models \lambda_k^n$ if and only if $|I| \leq k$.
\end{lemma}
\begin{proof}
	Suppose that $|I| \leq k$, i.e., there are at most $k$ components in the blockwise gluing. Thus, if one takes $k+1$ elements in $\alg A$, say $a_0, \dots a_k$, there is $j \in \{0, \ldots, k\}$ such that one of the following holds:
	\begin{itemize}
		\item $a_j = 1$,
		\item $\arc {a_{j}} = \arc {a_{j+1}}$, i.e. $a_j^n = a_{j+1}^n$,
		\item $a_{j +1} \ll a_j$.
	\end{itemize}
	In any case, $(a_{j+1}^n \backslash a_j^n)^n \backslash (a_{j+1} \vee a_j)=a_{j+1}\join a_j$ by Lemma \ref{lemma: definition of parts of lamndak}. Then $$\bigwedge_{i=0}^{k-1} (a_{i+1}^n \backslash a_i^n)^n \backslash (a_{i+1} \vee a_i)\leq a_{j+1}\join a_j \leq \bigvee_{i=0}^{k}a_i,$$ meaning that  $\lambda_k^n$ holds.
	
	Vice versa, suppose $| I | > k$. Then we have at least $k+1$ components in the blockwise gluing. Hence, one can pick $k+1$ elements different from $1$, all from distinct components, and order them from the smallest to the biggest. Precisely, let $a_0, a_1, \dots , a_k \in A$ such that $a_0 \ll a_1 \ll \dots \ll a_k < 1$; then by Lemma \ref{lemma: definition of parts of lamndak} $$\bigwedge_{i=0}^{k-1} (a_{i+1}^n \backslash a_i^n)^n \backslash (a_{i+1}\vee a_i)=1$$ while $\bigvee_{i=0}^{k}a_i < 1$, which implies that $\alg A \nvDash \lambda_k^n$. 
\end{proof}
 Let us recall that an algebra $\alg A$ is {\em generic} for a variety $\vv V$ if it generates $\vv V$; as a consequence of the above lemma, we can get the analogous of the result \cite[Corollary 4.3]{AM2003} for BL-algebras.
 \begin{theorem}
	Let $\alg A$ be a chain in $\BG_n$, $\alg A = \boxplus_{\alg I} \alg A_i$. If $\alg A$ is generic, then $I$ must be infinite. 
\end{theorem}
Moreover, we observe that $\lambda_1^k$ axiomatizes the subvariety $\ssBG_n$ generated by chains with only one archimedean component. Let us call algebras in $\ssBG_n$  {\em semiarchimedean}. Notice that, in commutative chains, having one archimedean component is equivalent to being simple (we remind the reader that an algebra is simple if it has only two congruences). This is not the case without commutativity, since a chain may have more archimedean components but still be simple, because of closure under conjugates of the congruence filters.

We proceed to show how one can obtain finite axiomatizations for varieties generated by finite archimedean gluings, where the components belong to specific varieties of IRLs. The ideas are again inspired by the results in \cite{AM2003,GalatosUgolinimanuscript}, where the considered varieties are generated by 1-sums, and satisfy commutativity and divisibility, or a weak form of the latter. However, while we do restrict to $n$-potent varieties, we do not assume commutativity, and consider a  wider class of constructions which include, but is not restricted to, 1-sums. This introduces some technical complications, and we will need to add some restrictions to the considered varieties. Let us now fix an $n \in \mathbb{N}$, $n \geq 1$. 
\begin{definition}\label{def:gluable}
	Consider $\vv V_1, \ldots, \vv V_m$ to be subvarieties of $\BG_n$. We call  $\vv V_1, \ldots, \vv V_m$ {\em gluable} if, for $i = 1, \ldots, m$:
\begin{enumerate}
	\item $\vv V_i$ is semiarchimedean, i.e., $\vv V_i \sse \ssBG_n$;
	\item $\vv V_i$ is axiomatizable, relatively to $\ssBG_n$, by an identity in one variable $t_i(x) = 1$, with $t_i$ being a $(\backslash,/)$-free term.
\end{enumerate}
\end{definition} 

The first condition ensures that it makes sense to take archimedean gluings where each component belongs to a specific variety, since chains of each $\vv V_i$ will only have one archimedean component. Notice that this condition is verified, for instance, by $n$-potent varieties of MV-algebras or Wajsberg hoops. The second condition has to do with the nature of the blockwise gluing construction; recall indeed that the divisions are not necessarily preserved by the gluing. The need to restrict to 1-variable terms is purely technical (notice that all varieties of MV-algebras can be axiomatized by 1-variable terms).

Let then $\vv V_1, \ldots, \vv V_m$ be gluable subvarieties of $\BG_n$, we define: 
\begin{equation}
	\vv V_1 \boxplus_c \dots  \boxplus_c \vv V_m =\vv V (\{ \boxplus_{\bf m} \alg  A_i : \alg A_i \in (\vv V_i)_{\vv c} \mbox{ for every } i= 1, \dots , m \}).
\end{equation}
We remind the reader from Notation \ref{notation1} that we write $\boxplus_{\bf m} \alg A_i$ for a gluing over the ordered index set $1 < \ldots < m$. 
Notice that chains in $\vv V_1 \boxplus_c \dots  \boxplus_c \vv V_m$ may have less than $m$ components. In order to be able to describe equationally what happens in this situation, we need to consider subsequences of $\{1, \ldots, m\}$.
Let us then define $\vv{SEQ_k}$ as the set of strictly increasing sequences of $k$ natural numbers between $1$ and $m$, i.e. 
$$
\vv{SEQ_k} = \{s=(s_1, \dots , s_k) : 0 < s_1< \dots < s_k \leq m \}.
$$
Then for each $k=2, \dots m$, we consider $k$ variables $x_1, \dots x_k$ and define two terms $p_k$ and $q_k$ as follows: 
\begin{align*}
	& p_k^n := \bigwedge_{i=1}^{k}(x_{i+1}^n \backslash x_i^n)^n \backslash (x_{i+1}\join x_i)\\
	& q_k^n := \bigvee_{i=1}^k x_i \join \bigvee_{s \in \vv{SEQ_k}} \Biggl ( \bigwedge_{j=1}^kt_{s_j}(x_j) \Biggr) 
\end{align*}
Finally, we define the equations $\varepsilon_k$ for $k=1, \dots , m$ as:
\begin{equation}
	\varepsilon_1^n(x):= \bigvee_{i=1}^m t_i(x)=1, \qquad \varepsilon_k^n:= p_k^n \leq q_k^n \;\; \mbox{ for } k \geq 2,
\end{equation}
where we recall that $t_i(x) = 1$ is the axiomatization of $\vv V_i$ relatively to $\ssBG_n$.

While $\varepsilon_1$ is quite self-explanatory, the following lemma provides an intuitive understanding of the identities $\varepsilon_k^n$ for $k \geq 2$. 
\begin{lemma}\label{lemma:epsilonk}
	Let $k \geq 2$. Then a chain $\alg A \in \BG_n$ satisfies $\varepsilon_k^n$ iff, whenever one considers elements $a_1, \ldots, a_k \in A$ such that $a_1 \ll \ldots \ll a_k < 1$, there is a sequence $s=(s_1, \dots, s_k)\in \vv{SEQ_k}$ such that $t_{s_j}(a_j)=1$ for $j=1, \dots , k$.
\end{lemma}
\begin{proof}
Let $\alg A \in \BG_n$. We first show that if one takes $a_1, \dots , a_k \in A$ that do not satisfy the hypothesis $a_1 \ll \ldots \ll a_k < 1$,  $\varepsilon_k^n$ holds with the assignment $x_i \mapsto a_i$.  
	First, if one of the $a_i$ is equal to $1$, then $\bigvee_{i=1}^ka_i=1$, and so $p_k(a_1, \dots, a_k)\leq q_k(a_1, \dots , a_k)=1.$
	Moreover, if there is an $a_i$ such that either $a_{i+1} \ll a_i$ or $a_{i+1}^n=a_i^n$, then, by Lemma \ref{lemma: definition of parts of lamndak}, $$p_k(a_1, \dots, a_k)\leq a_i \join a_{i+1}\leq \bigvee_{i=1}^k a_i \leq q_k(a_1, \dots, a_k).$$ 
	
	Hence, $\alg A \models \varepsilon_k^n$ iff the identity holds whenever assigning the variables to elements $a_1, \dots , a_k \in A$ such that $a_1 \ll a_2 \ll \dots \ll a_k < 1$. Notice that in this case, $p_k(a_1, \dots, a_k) = 1$ by Lemma \ref{lemma: definition of parts of lamndak}, and since none of the variables is set to $1$, $\bigvee_{i=1}^k a_i < 1$. Thus, $p_k(a_1, \dots, a_k) \leq q_k(a_1, \dots, a_k)$ iff $\bigvee_{s \in \vv{SEQ_k}}( \bigwedge_{j=1}^kt_{s_j}(x_j) )  = 1$, which is equivalent to the existence of a sequence $s=(s_1, \dots, s_k)\in \vv{SEQ_k}$ such that $t_{s_j}(a_j)=1$ for $j=1, \dots , k$. This completes the proof.
	\end{proof}
In order to be able to prove the main results of this subsection, it will be important to establish that the subvariety of $\BG_n$ axiomatized by $\lambda_m^n$ and $\varepsilon_k^n$ for $k=1, \dots, m$ has the finite model property, and as such, it is generated by its finite chains. To do so, we need a technical lemma.
\begin{lemma}\label{lemma: FEP K}
Let $\vv V_1, \ldots, \vv V_m$ be gluable subvarieties of $\BG_n$. Let $\alg A$ be a chain in $\BG_n$, $\alg A = \boxplus_{\bf m} \alg A_i$, such that $\alg A_i \in \vv V_i$ for $i=1 , \dots , m$. Then for any finite partial subalgebra $\alg B$ of $\alg A$, $\alg W_{\alg A, \alg B}^+ \in (\BG_n)_{\rm fin}$ and it satisfies $\lambda_m^n$ and $\varepsilon_k^n$ for $k=1, \dots, m$.
\end{lemma}
\begin{proof}
	The fact that $\alg W_{\alg A, \alg B }^+ \in (\BG_n)_{\rm fin}$ follows from Lemma \ref{theorem: preparation to FMP}. It is left to show that $\alg W_{\alg A, \alg B}^+ \models \lambda_m^n$ and $\alg W_{\alg A, \alg B}^+ \models \varepsilon_k^n$ for $k= 1 , \dots, m$. We prepare.	
	First recall that the elements in $\alg W_{\alg A, \alg B}^+$ are of the form $$\{(u,b)\}^\triangleleft=\{ x \in W : u(x) \leq_{\alg A} b \}$$ which are downsets of $W$ seen in $\alg A$. It can be easily shown that they are principal downsets: $$\{(u,b)\}^\triangleleft={\downarrow}_{\alg A} m \cap W, \mbox{ with } m =\max \{ x \in W : u(x) \leq_{\alg A} b \}.$$ 
	Indeed, $W$ is the submonoid of $\alg A$ generated by $\alg B$, so if $B = \{b_1, \ldots, b_q\}$, all its elements are of the form $b_1^{k_1} \cdot \ldots \cdot b_q^{k_q}$. Hence $W$ is a discretly totally ordered set whose elements are all below those of $B$, and therefore satisfies the ascending chain condition. Hence every nonempty set of its elements has a maximum, thus in particular the maximum of $\{x \in W : u(x) \leq b \}$ exists; let us call such maximum $m$. The fact that $\{(u,b)\}^\triangleleft = {\downarrow m}\cap W$ then follows from the definition of $\{(u,b)\}^\triangleleft$. 
		
	Moreover, it can be shown that if $X, Z \in \alg W_{\alg A, \alg B}^+$ where $X={\downarrow}_{\alg A} x \cap W$ and $Z= {\downarrow}_{\alg A} z \cap W$ with $Z \ll X$, then $z \ll x$ in $\alg A$. Indeed if $x^n \leq z$ in $\alg A$, then $X \circ^{n} X \subseteq Z$. This implies that $$X^n= \gamma_N(X \circ^{n} X) \subseteq \gamma_N(Z)=Z,$$ where the first equality follows from Lemma \ref{lemma: powers of sets}, and the inclusion from the fact that $\gamma_N$ is a nucleus hence order preserving. This contradicts the hypothesis, hence $Z \ll X$ implies $z \ll x$.
	
Let us indicate with $\top$ the $1$ of the algebra $\alg W_{\alg A, \alg B}^+$ to avoid confusion in what follows. We are ready to show that $\alg W_{\alg A, \alg B}^+ \models \lambda_m^n$. Suppose by way of contradiction that $\alg W_{\alg A, \alg B}^+\nvDash \lambda_m^n$, then $\alg W_{\alg A, \alg B}^+$ is blockwise gluing of at least $m+1$ components. Hence, we can find a chain of $k+1$ elements all belonging to different components and all different from $\top$, i.e. $W_1 \ll \dots \ll W_k < \top$ where $W_i={\downarrow}_{\alg A}w_i \cap W$ with $w_i \in W$. Thus, projecting back to $\alg A$, we have a the sequence $w_1 \ll w_2 \ll \dots \ll w_k < 1$ having $k+1$ elements all belonging to different components and all different from $1$ which implies that $\alg A \nvDash \lambda_m^n$, contradicting the hypothesis.

Let us now verify that $\alg W_{\alg A, \alg B}^+ \models \varepsilon_k^n$ with $k = 1 , \dots, m$. First suppose $k=1$ and let $Y={\downarrow}_\alg A y\cap W$ be an element of $\alg W_{\alg A, \alg B}^+$. Projecting $Y$ back to $\alg A$, we have that $y$ belongs to a certain archimedean component of $\alg A$, say $\alg A_i \in \vv V_i$; then $t_i(y)=1$. We prove that $t_i(y)=1$ implies $t_i(Y)=\top$ by induction on the complexity of the term $t_i$.

For the base case, if $t_i(x)=1$, then there is nothing to prove; if $t_i(x)=x$, then $t_i(y)=1$ means that $y=1$ and since $Y={\downarrow}_\alg A 1 \cap W$, $t_i(Y)=\top$.

For the inductive case, suppose that $t_i(y)=1$ implies that $t_i(Y)=\top$ for terms of complexity $n$; we prove it for $n+1$. Recalling that the terms $t_i$ of interest are divisions free, we need to consider the following cases:
\begin{itemize}
\item If $t_i(x)=t_1(x)\star t_2(x)$, for $\star \in \{\cdot, \land\}$, then $t_i(y)=t_1(y)\star t_2(y)=1$ implies that $t_1(y)=1$ and $t_2(y)=1$, so, by inductive hypothesis, $t_i(Y)=t_1(Y)\star t_2(Y)=\top \star \top =\top$.
\item If $t_i(x)=t_1(x)\join t_2(x)$, then $t_i(y)=t_1(y)\join t_2(y)=1$ implies that either $t_1(y)=1$ or $t_2(y)=1$. Then by inductive hypothesis, either $t_1(Y)=1$ or $t_2(Y)=1$, in any case $t_i(Y)=t_1(Y)\join t_2(Y)=\top$.
\end{itemize}
Hence, $\alg W_{\alg A, \alg B}^+ \models \varepsilon_1(x)$. 

Let now $2 \leq k \leq m$ and let $W_1, \dots, W_k \in \alg W_{\alg A,\alg B}^+$. By Lemma \ref{lemma:epsilonk}, it suffices to check that $\varepsilon_k^n$ holds for an assignment $x_i \mapsto W_i$ with $W_1 \ll \dots \ll W_k < \top$; precisely, it suffices to check that there is a sequence $s \in \vv{SEQ}_k$ such that $\bigwedge_{j=0}^k t_{s_j}(W_j)=\top$. Projecting back to $\alg A$, we have a sequence $w_1 \ll w_2 \ll \dots \ll w_k < 1$ of $k$ elements all belonging to different archimedean components and since $\alg A \models \varepsilon_k^n$, there exists a sequence $s \in \vv{SEQ}_k$ such that $t_{s_i}(w_{s_i})=1$. We have already shown above that $t(w_i)=1$ implies $t(W_i)=\top$. 
Hence, we have found a sequence $s \in \vv{SEQ}_k$ such that $\bigwedge_{j=0}^k t_{s_j}(W_j)=\top$, which implies that $\alg W_{\alg A, \alg B}^+ \models \varepsilon_k^n$ for $k=1, \dots , m$. This completes the proof.
\end{proof}
As a direct consequence:
\begin{proposition}\label{corollary: FMP K}
	The subvariety of $\BG_n$ axiomatized by $\lambda_m^n$ and $\varepsilon_k^n$ for $k=1, \dots, m$ has the finite embeddability property.
\end{proposition}
We are now ready to state the main result of this subsection. The following proof is a non-trivial adaptation of the one in \cite{GalatosUgolinimanuscript}, where the authors axiomatize varieties that satisfy a weak notion of divisibility, and which fixes some (minor) issues in the proof of the axiomatizations for subvarieties of BL-algebras in \cite[Theorem 5.1]{AM2003}. We notice in particular that the situation in this case is complicated by the fact that we are dealing with (possibly) partial algebras. To bypass this issue, we utilize the above Proposition \ref{corollary: FMP K}. 
\begin{theorem}\label{theorem: axiomatization of unbounded subvarieties}
	Let $\vv V_1, \dots , \vv V_m$ be gluable subvarieties of $\BG_n$. Then, $\vv V_1 \boxplus_c \dots \boxplus_c \vv V_m$ is axiomatized, relatively to $\BG_n$, by $\lambda_m^n$ and $\varepsilon_k^n$ for $k=1, \dots, m$.
\end{theorem}
\begin{proof}
	First observe that, by Lemma \ref{lemma: axiomatization via lambdak}, $\lambda_m$ is a necessary condition to belong to the variety $\vv V_1 \boxplus_c \dots \boxplus_c \vv V_m$. Note that then, one only needs to check that for every algebra $\alg A \in \BG_n$ such that $\alg A \models \lambda_m$, 
	\begin{center}
		$\alg A$ belongs to $\vv V_1 \boxplus_c \dots \boxplus_c \vv V_m$ if and only if $\alg A \models \varepsilon_k^n$ for all $k = 1 \ldots m$. 
	\end{center}
	Observe that it is sufficient to check the left-to-right direction on the generators of the variety. Let $\alg A$ be a generator, then it is of the form $\alg A= \boxplus_{\bf m}\alg A_i$, where each $\alg A_i$ is a chain in $\vv V_i$ for $i=1, \dots , m$. 
	
	Since each $\vv V_i \models \lambda_1$, each $\alg A_i$ only has one archimedean component. Moreover, since each $\alg A_i \in \vv V_i$, it is clear that $\varepsilon_1^n$ holds.
	Let now $2 \leq k \leq m$. By Lemma \ref{lemma:epsilonk}, it suffices to consider $a_1 \ll a_2 \ll \dots \ll a_k < 1$, and show that there is a sequence of $k$ increasing natural numbers $s=(s_1, \dots, s_k)\in \vv {SEQ_K}$ such that $t_{s_j}(a_j)=1$ for $j=1, \dots , k$. This clearly holds, again due to the fact each $\alg A_i \in \vv V_i$. We have shown that $\alg A \models \varepsilon_k^n$ for $k=1, \dots, m$, for each generator $\alg A$. Hence
	 $\vv V_1 \boxplus_c \dots \boxplus_c \vv V_m\models \varepsilon_k^n$, for $k=1. \dots, m$.
	
	For the right-to-left direction, let us call $\vv K$ the subvariety of $\BG_n$ axiomatized by $\lambda_m^n$ and $\varepsilon_k^n$ for $k = 1 \ldots m$. It suffices to check that a set of generators for $\vv K$ belong to $\vv V_1 \boxplus_c \dots \boxplus_c \vv V_m$. Since $\vv K$ has the finite embeddability property (Proposition \ref{corollary: FMP K}), and therefore the finite model property, one only needs to check that if $\alg B$ is a finite chain belonging to $\vv K$, then $\alg B \in \vv V_1 \boxplus_c \dots \boxplus_c \vv V_m$. We prove this by contrapositive.
	
 Suppose $\alg B \in \BG_n$ is a finite chain such that $\alg B \models \lambda_m$ but $\alg B \not \in \vv V_1 \boxplus_c \dots \boxplus_c \vv V_m$. The goal is to show that $\alg B \nvDash \varepsilon_k^n$ for some $k \in \{1, \ldots, m\}$. Say that $\alg B = \boxplus_{\bf k} \alg B_i$, for some $k \in \{1, \ldots, m\}$, since $\alg B \models \lambda_m$. The idea is to show that for any $i= 1, \dots, k$, one can pick a $b_i \in \alg B_i$ such that if each $x_i$ is assigned to $b_i$, $\varepsilon_k^n$ is falsified by this assignment.
	
	Since $\alg B$ is finite, then every $\alg B_i$, for $i= 1, \dots , k$, is also finite. Now, each $\alg B_i$ seen as a subreduct of $\alg B$ may be a partial algebra, but each $B_i$ is in fact the domain of a residuated lattice whose operations extend those of $\alg B_i$. Indeed, given that $\alg B_i$ is a finite chain whose product is order preserving, one can define the missing divisions in the obvious way:
	$$x \backslash y := \max \{ z \in B_i : x \cdot z \leq y \}, \;\; y /x := \max \{ z \in B_i : z \cdot x \leq y \}.$$
	Let us denote with $\alg B_i^{+}$ this (total) residuated lattice associated to $\alg B_i$ over the same domain $B_i$.
	  Consider the family of blocks $\{(\alg B_i, \betaij)\}_{i<_{\bf k}j}$ associated to $\alg B$ provided by Proposition \ref{proposition: a in BG}. Then, the same operators yield a family of blocks with the new total algebras over the same domains, $\{(\alg B^+_i, \betaij)\}_{i<_{\bf k}j}$, and it is straighforward that $\alg B =  \boxplus_{\bf k} \alg B_i^+$. In other words, we can consider $\alg B$ as the blockwise gluing of the total algebras $\alg B_1^+, \dots , \alg B_k^+$. 
	  
	  We are now ready to pick the elements $b_i$ so that the assignment $x_i \mapsto b_i$ makes $\varepsilon_k^n$ fail. First, for each $i=1, \dots, k$, let 
	$$N_i=\{\vv V_j : \alg B^+_i \not \in \vv V_j \mbox{ for some } j= 1 , \dots , m \}.$$ 
	Observe that it is possible that some of the $N_i$ are empty, but surely not all of them since $\alg B \not \in \vv V_1 \boxplus_c \dots \boxplus_c \vv V_m$. In case $N_i= \emptyset$, than one can choose whatever $b_i\in B_i$ different from $1$. Otherwise, consider the join of the varieties in $N_i$, $\vv N_i$ in what follows, and define
	$$
	n_i(x)= \bigvee \{ t_j(x) : \vv V_j \in N_i \}
	$$
	where $t_j(x)$ is the term axiomatizing $\vv V_j$. $\vv N_i$ is the variety axiomatized by $n_i(x)=1$ \cite[Corollary 4.3]{Galatos04}. Note that $\alg B^+_i \not \in \vv N_i$. Indeed since $\alg B^+_i$ is subdirectly irreducible, as a consequence of J\'onsson Lemma, if it belongs to the join of finitely many varieties then it should belong to one of them; but since we know that $\alg B_i$ does not belong to any of the $\vv V_j \in N_i$, we get that $\alg B_i \not \in \vv N_i$. Thus there exists a $b_i\neq 1 \in \alg B^+_i$ such that $n_i(b_i)\neq 1$, and so such that $t_j(b_i) \neq 1$ for all $j$ such that $\alg B_i^+ \notin \vv V_j$; let us choose exactly this $b_i$. 
	
We claim that $\varepsilon_k^n$ fails with the assignment mapping each $x_i$ to $b_i$. Notice first that if $k=1$, $N_1$ necessarily contains all $\vv V_j$ for $j = 1 \ldots m$, which means that $\varepsilon_1(b_1)=n_1(b_1) \neq 1$.

Let now $k \geq 2$. Observe that $b_1 \ll \dots \ll b_k <1$, thus, by Lemma \ref{lemma:epsilonk}, it suffices to show that $\bigvee_{s \in \vv{SEQ_k}}\Bigl (\bigwedge_{j=1}^k t_{s_j}(b_j) \Bigl ) \neq 1$. Since $\alg B \not \in \vv V_1 \boxplus_c \dots \boxplus_c \vv V_m$, for any sequence $s=(s_1, \dots , s_k) \in \vv{SEQ_k}$ there is an $s_i$ such that $\alg B_i^+ \not \in \vv V_{s_i}$, and so $t_{s_i}(b_i)\neq 1$. This implies that also $\bigvee_{s \in \vv{SEQ_k}}\Bigl (\bigwedge_{j=1}^k t_{s_j}(b_j) \Bigl ) \neq 1$, hence $\varepsilon_k^n$ fails with the chosen assignment. 
This completes the proof. 
\end{proof}
\begin{remark}
	We observe that one could rephrase the same results if all the gluable varieties $\vv V_1, \ldots, \vv V_m$ belong to a common subvariety $\vv W$ of $\BG_n$; then the requirements of Definition \ref{def:gluable} for $\vv V_1, \ldots, \vv V_m$ have to be satisfied relatively to $\vv W$. So for instance, the same results hold if one consider commutative varieties. We did not phrase the results in this more general case to avoid making the statements and definitions harder to read.
\end{remark}
\subsection{The $0$-bounded case}
All the results contained in this section can be adapted to the $0$-bounded case. Notice that here, if $\alg A$ is a chain in $\FL_w$ and it is a blockwise gluing $\alg A = \boxplus_{\bf I} \alg A_i$, the index set $\bf I$ has a minimum $i_0$ in the order, and $0 \in A_{i_0}$. We call such a gluing {\em $0$-bounded}. With this in mind, we can obtain all the same results as in Subsection \ref{subsection:axiom-npotent}. In particular, let $\BG_n^0$ be the subvariety of $\vv{SemFL}_w$ axiomatized by (block$_n$) for $n \geq 1$, and $\vv{C}\BG_n^0$ its commutative subvariety. Then:
\begin{corollary}
	$(\vv C)\BG_n^0$ is the variety generated by $0$-bounded archimedean gluings of $n$-potent (commutative) chains, and it has the finite embeddability property, for all $n \geq 1$.
	The join $\bigvee_{n \in \mathbb{N}} (\vv C)\BG_n^0$ is generated by all $0$-bounded archimedean gluings of (commutative) chains, and it is strictly contained in $\vv{SemFL}_w$ ($\vv{MTL}$).
\end{corollary}

The results in Subsection \ref{subsection:axioms-subvar} can also be adapted to the $0$-bounded case, but some proofs are required. 
In particular, another important difference with respect to 1-sums becomes apparent here. In $0$-bounded 1-sums, the lowest component can be easily identified by the fact that it contains all and only the elements such that $\neg\neg x = x$ (where we mean $\neg x:= x \backslash 0$). This is not the case for blockwise gluings. Nonetheless, we show that one can still identify equationally the lowest component in archimedean gluings. Let us fix an $n \in \mathbb{N}$. First, notice that:
\begin{proposition}
	Let $\alg A$ be a chain in $\BG_n^0$, $\alg A = \boxplus_{\bf I} \alg A_i$; then $\alg A \models \lambda_k^n$ if and only if $|I| \leq k$. Moreover, if $\alg A$ is generic for  $\BG_n^0$, $I$ must be infinite.
\end{proposition}
Let us then call $\ssBG_n^0$ the subvariety of $\BG^0_n$ axiomatized by $\lambda_1$, whose chains only have one archimedean component.
\begin{lemma}\label{lemma: preservation of the first component}
Let $\vv V$ be a subvariety of $\BG_n^0$ generated by a class of $0$-bounded blockwise gluings whose lowest component belongs to a variety $\vv W \sse \ssBG_n^0$, and suppose that $\vv W$ is axiomatized, relatively to $\ssBG_n^0$, by an equation in one variable $t_0(x)=1$. Then $\vv V \models (\neg x^n)^n \backslash t_0(x) = 1$.
\end{lemma}
\begin{proof}
It suffices to show that the generators of $\vv V$ satisfy the desired identity. Let then $\alg A$ be a $0$-bounded blockwise gluing, $\alg A = \boxplus_{\bf I} \alg A_i$, where the lowest component $\alg A_{i_0}$ belongs to $\vv W$. Since $\vv V \sse \BG_n^0$, recall that the $A_i$s are the archimedean components of $\alg A$.

Let then $x \in A$. If $x \in A_j$ with $j \neq i_0$, we have that $0 \ll x$. Then also $0 \ll x^n$, and so necessarily $\arc {\neg x^n} = \arc 0$. Therefore, given the $n$-potency of $\alg A$, $(\neg x^n)^n = 0$ hence $$(\neg x^n)^n \backslash t_0(x) = 0 \backslash t_0(x) = 1.$$ 

If instead $x \in A_{i_0}$, then since $\alg A_{i_0} \in \vv W$, $t_0(x) = 1$ hence $(\neg x^n)^n \backslash t_0(x) = 1$. This proves that $\alg A \models (\neg x^n)^n \backslash t_0(x) = 1$, which implies the thesis.
\end{proof}

Thanks to the above lemma, we can adjust the machinery of the previous subsection to the $0$-bounded case. 
\begin{definition}\label{def:gluable0}
	Consider $\vv V_1, \ldots, \vv V_m$ to be subvarieties of $\BG_n$ and $\vv V_0$ a variety of $\BG_n^0$. We call  $\vv V_0, \ldots, \vv V_m$ {\em gluable} if, for $i = 1, \ldots, m$:
\begin{enumerate}
	\item $\vv V_1 \ldots \vv V_m$ are gluable in the sense of Definition \ref{def:gluable};
	\item $\vv V_0\sse \ssBG^0_n$ and it is axiomatized, relatively to $\ssBG_n^0$, by an identity in one variable $t_i(x) = 1$, with $t_i$ being a $(\backslash,/)$-free term.
\end{enumerate}
\end{definition} 
Fix then $\vv V_1, \ldots, \vv V_m$ subvarieties of $\BG_n$ and $\vv V_0$ a subvariety of $\BG_n^0$, with $\vv V_0, \ldots, \vv V_m$  gluable as in the above definition.
We now consider increasing sequences of $k+1$ natural numbers between $0$ and $m$:
$$
\vv{SEq_{k,0}^n}=\{ s=(s_0, \dots, s_k) : 0 \leq s_0 <s_1 < \dots < s_k \leq m \}.
$$
Then, for $k=1, \dots , m$, let
\begin{align*}
	& p_{k,0}^n := \bigwedge_{i=0}^{k}(x_{i+1}^n \backslash x_i^n)^n \backslash (x_{i+1}\join x_i)\\
	& q_{k,0}^n := \bigvee_{i=0}^k x_i \join \bigvee_{s \in \vv{SEQ_k}} \Biggl ( \bigwedge_{j=0}^kt_{s_j}(x_j) \Biggr) 
\end{align*}
We are ready to define the equations $\varepsilon_k$ for $k=0, \dots, m$:
\begin{equation}
	\varepsilon_{0,0}^n(x):=(\neg x^n)^n \backslash t_0(x) = 1; \qquad \varepsilon_{k,0}^n:= p_{k,0}^n \leq q_{k,0}^n \;\;\mbox{ for } 1\leq k \leq m.
\end{equation} 
We prove the analogous result to Proposition \ref{corollary: FMP K}.
\begin{proposition}\label{corollary: FMP KO}
	The subvariety of $\BG_n^0$ axiomatized by $\lambda_{m+1}^n$ and $\varepsilon_{k,0}^n$ for $k=0, \dots, m$ has the finite embeddability property.
\end{proposition}
\begin{proof}
Let $\vv K$ be the subvariety of $\BG_n^0$ axiomatized by $\lambda_{m+1}^n$ and $\varepsilon_{k,0}^n$ for $k=0, \dots, m$.
It suffices to prove that if one considers a chain $\alg A\in \vv K$, and any finite partial subalgebra $\alg B$ of $\alg A$ which contains $0$, then $\alg W_{\alg A, \alg B}^+ \in \vv K_{\rm fin}$. 

	The fact that $\alg W_{\alg A, \alg B}^+$ is a chain in $(\BG_n^0)_{\rm fin}$ follows from Lemma \ref{theorem: preparation to FMP}. Hence one only needs to prove that $\alg W_{\alg A, \alg B}^+$ satisfies $\lambda_{m+1}^n$ and $\varepsilon_{k,0}^n$ for $k=0, \dots, m$. 
	The proof of the fact that $\alg W_{\alg A, \alg B}^+ \models \lambda_{m+1}^n$, and that $\alg W_{\alg A, \alg B}^+ \models \varepsilon_{k,0}^n$ for $k=1, \dots, m$, is as in Lemma \ref{lemma: FEP K}.
	
	It is left to prove that $\alg W_{\alg A, \alg B}^+\models \varepsilon_{0,0}^n(X)$.	
	Notice that since $0 \in \alg B$, there is at least an $X \in \alg W_{\alg A, \alg B}^+$ which is of the form $X={\downarrow}_\alg A x \cap W$ with $x^n=0$ in $\alg A$. Moreover, the $0$-bound of the algebra $\alg W_{\alg A, \alg B}^+$ is given by ${\downarrow}_\alg A 0 \cap W= \{0\}$; let us denote this element by $\bot$. 
	
	Consider an element $X \in \alg W_{\alg A, \alg B}^+$ where $X={\downarrow}_\alg A x \cap W$. Then either $X^n=\bot$ or $X^n \gg \bot$. 
	In the first case, projecting back to $\alg A$, we get that $x^n=0$. Indeed, notice that $$X^n= \gamma_N(X \circ^{n} X)$$ by Lemma \ref{lemma: powers of sets}, hence, since $\bot = \{0\}$, it follows that for all $a \in X \circ^{n} X$, $a = 0$. In particular, $x^n = 0$.
	 Therefore, $x \in A_0 \in \vv V_0$ and so $t_0(x)=1$; as in the proof of Lemma \ref{lemma: FEP K} it can be shown that this implies that $t_0(X)=\top$ and so $(\neg X^n)^n \backslash t_0(X)=\top$. 
	 
	In the case $X^n \gg \bot$, since $\alg W_{\alg A, \alg B}^+ \in \BG_n^0$, we get that $(\neg X^n)^n=\bot$. Therefore, $(\neg X^n)^n \backslash t_0(X)=\top$ and we have proved that $\alg W_{\alg A, \alg B}^+ \models \varepsilon_{0,0}^n$. The proof is complete.
\end{proof}
Finally:
\begin{theorem}
Let $\vv V_1, \ldots, \vv V_m$ be subvarieties of $\BG_n$ and $\vv V_0$ a subvariety of $\BG_n^0$, with $\vv V_0, \ldots, \vv V_m$  gluable.
Then, $\vv V_0 \boxplus_c \dots \boxplus_c \vv V_m$ is axiomatized, relatively to $\BG_n^0$, by $\lambda_{m+1}^n$ and $\varepsilon_{k,0}^n$ for $k=0, \dots, m$.
\end{theorem}
\begin{proof}
	First observe that by Lemma \ref{lemma: axiomatization via lambdak}, $\lambda_{m+1}$ is a necessary condition to belong to the variety $\vv V_0 \boxplus_c \dots \boxplus_c \vv V_m$. Hence it suffices to check that given any $\alg A \in \BG_n^0$ such that $\alg A \models \lambda_{m+1}^n$, $\alg A$ belongs to $\vv V_0 \boxplus_c \dots \boxplus_c \vv V_m$ if and only if $\alg A \models \varepsilon_{k,0}^n$ for $k=0, \dots, m$. 
	
	For the left-to-right direction, the proof is the same as in Theorem \ref{theorem: axiomatization of unbounded subvarieties}, modulo Lemma \ref{lemma: preservation of the first component} which covers the case $k = 0$. 
	For the converse direction, call $\vv K_0$  the subvariety of $\BG_n^0$ axiomatized by $\lambda_{m+1}^n$ and $\varepsilon_{k,0}^n$ for $k=0, \dots, m$. It suffices to check that the generators of $\vv K_0$ belong to  $\vv V_0 \boxplus_c \dots \boxplus_c \vv V_m$. In particular, since $\vv K_0$ has the finite embeddability property (Corollary \ref{corollary: FMP KO}) it is enough to show that if $\alg B$ is a finite chain in $\vv K_0$, then it belongs to $\vv V_0 \boxplus_c \dots \boxplus_c \vv V_m$. We prove it by contrapositive.
	
	Consider $\alg B = \boxplus_{\bf I} \alg B_i \in \BG_n^0$ to be a finite chain which satisfies $\lambda_{m+1}^n$ but $\alg B \not \in \vv V_0 \boxplus_c \dots \boxplus_c \vv V_m$. As in the proof of Theorem \ref{theorem: axiomatization of unbounded subvarieties}, since $\alg B$ is finite, it can be seen as a gluing of total algebras $\alg B = \boxplus_{\bf I}\alg B_i ^+ $ with $|I| \leq m + 1$, where each $\alg B_i^+$ is the finite total algebra uniquely associated to $\alg B_i$. 
	
	Once again the idea is to find an assignment $x_i \mapsto b_i$ which makes $\varepsilon_{k,0}^n$ fail. If $\alg B_0^+ \not \in \vv V_0$, then there is an element $b_0 \in B_0$ such that $t_0(b_0) \neq 1$. Notice that in $\alg B$, $(\neg b_0^n)^n = (\neg 0)^n = 1$, thus: $$(\neg b_0^n)^n \backslash t_0(b_0) = 1 \backslash t_0(b_0) = t_0(b_0) \neq 1.$$
	Thus $\alg B \nvDash \varepsilon_{0,0}^n(x)$.
If instead $\alg B_0^+ \in \vv V_0$, one can proceed as in the proof of Theorem \ref{theorem: axiomatization of unbounded subvarieties} to obtain an assignment that makes $\varepsilon_{|I|,0}^n$ fail. This completes the proof.
\end{proof}
\begin{remark}
	We observe that, similarly to the unbounded case, one can obtain the same axiomatizations results if, considering an equation $\varepsilon$ in the $0$-free signature, $\vv V_0 \in \BG_n^0 + \varepsilon$ and $\vv V_1 \ldots, \vv V_n \in \BG_n + \varepsilon$. In this case the requirements of Definition \ref{def:gluable0} have to be satisfied relatively to $\BG_n + \varepsilon$ and $\BG_n^0 + \varepsilon$. So for instance, the same results hold if one consider commutative varieties. We did not phrase the results in this generality to help readability.
\end{remark}

We conclude this section by highlighting another important consequence of the FEP for the varieties for which we also provided a finite axiomatization. 
Indeed, all finitely axiomatizable varieties with the FEP have a decidable universal theory (see for instance \cite[\S 6.5]{GJKO}). Therefore:
\begin{corollary}
	The following varieties have a decidable universal theory:
	\begin{enumerate}
		\item $\BG_n$ for all $n \geq 1$, their $0$-bounded versions, and commutative subvarieties;
		\item $\vv V_1 \boxplus_c \ldots \boxplus_c \vv V_n$ for any gluable $\vv V_1 \ldots \vv V_n$ in $\BG_n$, for all $n \geq 1$, their $0$-bounded versions, and commutative subvarieties.
	\end{enumerate}
\end{corollary}

\section{Amalgamation failures}
In this final section we use blockwise gluings to construct some chains which provide a counterexample to the amalgamation property in several important varieties of residuated lattices. We solve in the negative several open problems in the literature; in particular, we show that the following varieties do not have the amalgamation property: $\vv{MTL}$, $\vv{SemFL}_w$, and their 0-free subreducts, $\vv{Sem(C)IRL}$. For the commutative varieties this also entails the failure of the deductive interpolation property in the corresponding substructural logics. We proceed by first recalling some needed definitions and facts about the amalgamation property and its connection to deductive interpolation.

\subsection{Amalgamation, essential amalgamation, and deductive interpolation}
First, let us recall the definition of an amalgam, and the amalgamation property for a class of algebras. 

Let $\vv K$ be a class of algebras of the same signature. A {\em V-formation in $\vv K$} is a tuple $(\alg A, \alg B,\alg C,i,j)$ where $\alg A, \alg B, \alg C \in \vv K$ and $i,j$ are embeddings of $\alg A$ into $\alg B,\alg C$, respectively. Given a V-formation $(\alg A, \alg B,\alg C,i,j)$ in $\vv K$, $(\alg D,h,k)$ is said to be an \emph{amalgam of $(\alg A, \alg B,\alg C,i,j)$ in $\vv K$} if $\alg D \in \vv K$ and $h, k$ are embeddings of $\alg B, \alg C$, respectively, into $\alg D$ such that the compositions $h \circ i$ and $k\circ j$ coincide: 
\begin{figure}[h!]
\begin{center}
\begin{tikzpicture}
\node at (0,0) {$\alg A$};
\node at (2,1) {$\alg B$};
\node at (2,-1) {$\alg C$};
\node at (4, 0) {$\alg D$};
\draw [>->] (0.3, 0.1) -- (1.8, 0.8);
\draw [>->] (0.3, -0.1) -- (1.8, -0.8);
\draw [>->] (2.2, 0.8) -- (3.7, 0.1);
\draw [>->] (2.2, -0.8) -- (3.7, -0.1);
\node at (1, 0.65) {\footnotesize{$i$}};
\node at (3, 0.65) {\footnotesize{$h$}};
\node at (1, -0.65) {\footnotesize{$j$}};
\node at (3, -0.7) {\footnotesize{$k$}};
 \end{tikzpicture}\end{center} \end{figure}
 
$\vv K$ has the {\em amalgamation property} (AP) if each V-formation in $\vv K$ has an amalgam in $\vv K$.

In general, establishing the amalgamation property of a class of algebras can be a difficult task, and it is necessary to have a good understanding of the structures involved. In the framework of residuated lattices, some of the most relevant varieties with the AP are the following: C(I)RLs and FL$_{e(w)}$-algebras (since they have the Craig interpolation property, see \cite{Ono} and \cite[Lemma 8.4.2, Theorem 8.4.3]{MPT2023}), abelian lattice-ordered groups \cite{Pierce}, BL-algebras \cite{Montagna06}. However, several important varieties are known to lack the AP: semilinear RLs \cite{GilferezLeddaTsinakis2015}, semilinear CRLs \cite{FussnerSantschi24}, lattice-ordered groups \cite{Pierce}, and also the variety of all residuated lattices, as shown in the recent preprint \cite{JipsenSantschi}. In this work, we show that also MTL-algebras and semilinear (C)(I)RLs lack the AP.

In order to obtain our results, it is convenient to make use of a necessary condition for a variety to have the amalgamation property extracted in \cite{FussnerSantschi24}. The authors, further elaborating on results in \cite{MMT2014,FussnerMetcalfe2024}, show that in some cases the study of the amalgamation property of a variety can be reduced to the study of a weaker property, namely {\em essential amalgamation}, in the class of its subdirectly irreducible members.

An embedding $e: \alg A \to \alg B$ is called {\em essential} if for each homomorphism $h: \alg B \to \alg C$, whenever $h \circ e $ is an embedding, then $h$ is an embedding. If $e : \alg A \to \alg B$ is the inclusion map, then $e$ is essential if and only if for each $\theta \in \vv{Con}(\alg B)$ with $\theta \neq \Delta_{\alg B}$, also $\theta \cap A^2 \neq \Delta_{\alg A}$. A V-formation $(\alg A, \alg B, \alg C, i, j)$ is called an {\em essential span} if $j$ is an essential embedding. Moreover, we say that a class $\vv K$ has the {\em essential amalgamation property} (or EAP) if each essential span in $\vv K$ has an amalgam in $\vv K$.
Given a variety $\vv V$, let $\vv V_{\si}$ be the class of subdirectly irreducible members of $\vv V$. 
\begin{theorem}[{{\cite[Corollary 3.10]{FussnerSantschi24}}}]\label{thm:fussnersantchi}
	Let $\vv V$ be a variety. If $\vv V_{\si}$ does not have the essential amalgamation property, then $\vv V$ does not have the amalgamation property.
\end{theorem}
We will exhibit a V-formation, constituting of subdirectly irreducible integral chains, that has no essential amalgam in residuated chains; we will use this fact together with the theorem above to disprove the amalgamation property in several varieties of semilinear residuated lattices.

In varieties with the {\em congruence extension property} (CEP), such as all varieties of commutative residuated lattices, the amalgamation property corresponds to the {\em deductive interpolation property} of the associated logic. 
We say that a logic, associated to a consequence relation $\vdash$, has the deductive interpolation property if for any set of formulas $\Gamma \cup \{\psi \}$, if $\Gamma \vdash \psi$ there exists a formula $\delta$ such that $\Gamma\vdash \delta$, $\delta \vdash \psi$ and the variables appearing in $\delta$ belong to the intersection of the variables appearing both in $\Gamma$ and in $\psi$, in symbols $Var(\delta) \subseteq Var(\Gamma) \cap Var(\psi)$. 
If a logic $\cc L$ has a variety $\vv V$ as its equivalent algebraic semantics, and $\vv V$ satisfies the congruence extension property, $\cc L$ has the deductive interpolation property if and only if $\vv V$ has the amalgamation property. Without the CEP, the amalgamation property corresponds to the stronger Robinson property, see \cite{MMT2014}. 

Since all commutative varieties of residuated lattices have the CEP \cite[Lemma 3.57]{GJKO}, our results will also demonstrate that several important substructural logics lack the deductive interpolation property.

\subsection{Amalgamation failures}
In this subsection we will use the blockwise gluing construction to build a V-formation $(\alg A, \alg B, \alg C, f,g )$ that has no amalgam in chains. We exhibit an example given by three small 2-potent integral chains; note that idempotent integral chains do have the AP \cite{Gabbay, Maksimova}.

We take $\alg A$ to be the three-element G\"odel chain $A=\{1, a, \bot\}$ with $1 > a = a^2 >\bot$. We now construct two chains $\alg B$ and $\alg C$ of which $\alg A$ is a subalgebra. The intuition is that while there is quite some freedom in defining products between two blocks, the total order imposes some restrictions that will cause the failure of the AP. 

Let $\alg 2$ be the two-element Boolean algebra with domain $\{1, \bot\}$, and $\alg \L_3$ be the three-element \L ukasiewicz chain with domain $\L_3 = \{1, b, \bot\}$, where $1 > b > \bot = b^2$.
We take $\alg B$ to be the 1-sum of $\alg \L_3 \oplus \alg 2$, or equivalently, the blockwise gluing $\alg \L_3 \boxplus_\beta \alg 2$ with $\beta = ({\rm id}, {\rm id}, {\rm id}, {\rm id})$. 
In order to define $\alg C$, let us consider the following triple $(\alg K, \sigma, \gamma)$, where: $\alg K$ is the simple CIRL with domain $\{1, c, d, \bot \}$ with $1 > c > d > \bot = c^2$, and $\sigma$ and $\gamma$ are defined as follows:
\begin{align*}
	\sigma (1) &= 1, \;\;\sigma(c)=\sigma(d)=d, \;\; \sigma(\bot)=\bot\\
	\gamma (1) &= 1, \;\;\gamma(c) = \gamma(d)=c,\;\; \gamma(\bot)=\bot.
\end{align*}
Easy calculations show the following.
\begin{lemma}
	$(\alg K, \sigma, \gamma)$ is a lower block. 
\end{lemma}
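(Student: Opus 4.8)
The proof is a finite verification that $\alg K$ together with the given maps $\sigma,\gamma$ meets each clause in the definition of a lower compatible triple. The plan is to first fix the total CIRL underlying $\alg K$, then dispatch the requirements in turn, exploiting throughout the single fact that collapses every computation: in $\alg K$ the product of any two non-unit elements equals the bottom element $c^2$ (forced by $c^2 = d^2 = cd$ together with $2$-potence), while $\sigma$ and $\gamma$ both fix $1$ and $c^2$ and send non-units to non-units.

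First I would record that the stipulated products on $\{1,c,d,c^2\}$, ordered $1 > c > d > c^2$, determine a genuine total CIRL whose only non-trivial (i.e. non-$1$) divisions are $c \to d = c \to c^2 = d \to c^2 = c$. Clause (1) of the definition demands that $x\back y$ and $y/x$ be undefined exactly when $\sigma(x) \leq y$ and $x \not\leq y$; reading this off the values of $\sigma$, I would check that it singles out precisely the pair $(c,d)$, matching the prescription that $c \to d$ is forgotten whereas $c \to c^2$ and $d \to c^2$ are retained. Since multiplication is total and every residuation instance not involving the forgotten $c \to d$ is inherited verbatim from the total CIRL (the remaining instances being vacuous, as the hypothesis that the relevant division be defined fails), the partial-IRL axioms --- associativity, order preservation of the product and of the divisions, and the residuation biconditional where applicable --- all hold, so $\alg K$ is a partial IRL of the required form.

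Next I would verify that $(\sigma,\gamma)$ is a residuated pair by confirming $\sigma(x) \leq y \iff x \leq \gamma(y)$ for all $x,y$; as both maps are monotone and agree with the identity on $1$ and $c^2$, this reduces to a short check on the middle elements, which I would present as a $4 \times 4$ table. That $\sigma$ is an interior operator ($\sigma(x) \leq x$, monotone, idempotent) and $\gamma$ a closure operator ($x \leq \gamma(x)$, monotone, idempotent) is then immediate from their defining values. The strong-conucleus identity $x\sigma(y) = \sigma(xy) = \sigma(x)y$ for $x,y \neq 1$ is the one clause where one might anticipate work, but the product-collapse makes it instant: all three expressions are products of two non-units (note $\sigma(y) \neq 1$ whenever $y \neq 1$) and hence equal $c^2 = \sigma(c^2)$. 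The same observation settles clause (c): for $y \neq 1$ we have $xy = c^2 \leq \sigma(x)$ because $c^2$ is the least element.

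The only point meriting genuine care is the bookkeeping in clause (1): one must confirm that the chosen $\sigma$ forces exactly the division $c \to d$ to be undefined --- no more and no fewer --- since forgetting any other set of divisions would either break the ``if and only if'' of clause (1) or spoil the residuation inherited from the total CIRL. Everything else reduces, via the collapse of non-unit products to the bottom element, to routine finite checking.
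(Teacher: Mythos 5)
Your proposal is correct and takes essentially the same approach as the paper: a direct finite verification that $\alg K$ is a partial IRL with exactly $c\to d$ forgotten and that $(\sigma,\gamma)$ is a residuated pair with $\sigma$ a strong conucleus and $\gamma$ a closure operator, exploiting (as the paper does for clause (c)) that all products of non-unit elements collapse to the bottom $c^2$. Your write-up is simply more explicit than the paper's one-line ``direct computation'' argument.
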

We set $\alg C: = \alg K \boxplus_\beta \alg 2$ with $\beta = (\sigma,\sigma,\gamma, \gamma)$.
Let us call $\VS$ the V-formation $(\alg A, \alg B, \alg C, {\rm id}_{\alg A}, {\rm id}_{\alg A})$. To help the intuition, Figure \ref{figure:VSformation} draws the algebras $\alg A, \alg B, \alg C$ with the relevant operations.
\begin{figure}[h!]
\begin{center}
\begin{tikzpicture}
\draw (-1,1) -- (-1,0);
     \fill (-1,1) circle (0.05);
 \node[right] at (-1,1) {$1$}; 
 \fill (-1,0.5) circle (0.05);
 \node[right] at (-1,0.55) {$a = a^2$};
 \fill (-1,0) circle (0.05);
 \node[right] at (-1,0) {$\bot$};
\node at (-0.9,-0.7) {$ \alg A$}; 

\draw (3,1.5) -- (3,0);
 \fill (3,1.5) circle (0.05);
 \node[right] at (3,1.5) {$1$}; 
 \fill (3,1) circle (0.05);
 \node[right] at (3,1) {$a = a^2$};
 \fill (3,0.5) circle (0.05);
 \node[right] at (3,0.5) {$b=ab=b \to \bot$};
 \fill (3,0) circle (0.05);
 \node[right] at (3,0) {$\bot = b^2$};
\node at (3.2,-0.7) {$ \alg B$}; 

\draw (7.5,2) -- (7.5,0);
   \fill (7.5,2) circle (0.05);
 \node[right] at (7.5,2) {$1$}; 
 \fill (7.5,1.5) circle (0.05);
 \node[right] at (7.5,1.5) {$a = a^2$};
 \fill (7.5,1) circle (0.05);
 \node[right] at (7.5,1) {$c = c \to \bot$};
 \fill (7.5,0.5) circle (0.05);
 \node[right] at (7.5,0.5) {$d = ac = ad$};
  \node[right] at (7.5, 0) {$\bot = c^2$};
  \fill (7.5, 0) circle (0.05);
  \node at (7.7,-0.7) {$ \alg C$};
  \end{tikzpicture}\caption{The algebras in the V-formation $\VS$.}\label{figure:VSformation}
   \end{center}  
\end{figure}

 We will show that there is no amalgam to $\VS$ in the class of $\mathsf{RL}$-chains.

\begin{lemma}\label{lemma:noamalgam}
	$\VS$ is an essential span and there is no  amalgam $(\alg D, h,k)$ to $\VS$ in the class of $\mathsf{RL}$-chains.
\end{lemma}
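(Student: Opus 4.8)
The plan is to argue by contradiction. Suppose $(\alg D, h, k)$ were an amalgam of $\VS$ in the class of $\mathsf{RL}$-chains, so $\alg D$ is totally ordered and $h\colon \alg B\to \alg D$, $k\colon \alg C\to \alg D$ are embeddings with $h$ and $k$ agreeing on $\alg A$. Identifying each element with its image, the fact that $h,k$ preserve the order and all operations and coincide on the common elements $1,v,u$ forces $\alg D$ to contain the chain $1>v>b>u$ coming from $\alg B$ together with $1>v>c>d>u$ coming from $\alg C$, sharing $v$ and $u$. Since $\alg D$ is a chain, the only remaining freedom is the position of $b$ relative to $c$ and $d$ inside the interval $(u,v)$. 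First I would record the data that survive in $\alg D$ because $h,k$ are homomorphisms: the products $vb=b$, $vc=d$, $vd=d$, $b^2=c^2=d^2=u$, and the residuals $v\backslash b=b$, $c\backslash d=v$, $b\backslash u=u/b=b$, and $d\backslash u=u/d=c$ (the last two come with both one-sided forms since $\alg B,\alg C$ are commutative). The conceptual heart is a clash between two incompatible behaviours of $v$ and of the ``absorption to $u$'': in $\alg B$ the element $v$ \emph{fixes} $b$ and $b$ sends to $u$ exactly its own downset, whereas in $\alg C$ the same $v$ strictly \emph{lowers} $c$ to $d$ and $d$ already annihilates everything up to $c$ down to $u$.

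Then I would dispose of each position of $b$ by pairing one monotonicity/product bound with one residuation identity. (i) If $b>c$: from $b\backslash u=b$ and $d\leq b$ we get $bd\leq u$, while from $u/d=c$ and $b\not\leq c$ we get $bd>u$, a contradiction. (ii) If $b=c$ in $\alg D$: then $b=vb=vc=d$ using $vb=b$ and $vc=d$, contradicting $b=c>d$. (iii) If $d<b<c$: from $v\backslash b=b$ and $c\not\leq b$ we get $vc>b$, yet $vc=d<b$. (iv) If $b=d$ in $\alg D$: then $b=b\backslash u=d\backslash u=c$, contradicting $b=d<c$. (v) If $b<d$: from $b\backslash u=b$ and $d\not\leq b$ we get $bd>u$, while $b\leq d$ and $d^2=u$ give $bd\leq d^2=u$, a contradiction. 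As these five cases exhaust all positions of $b$ in the chain, no such $\alg D$ can exist.

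The main obstacle is less any single computation than organizing it correctly: one must recognize that the problem collapses to finitely many order-positions of $b$ and then isolate, for each, the precise pair consisting of a product (or monotonicity) bound and a residual identity that conflict. Two points of care make the argument valid for arbitrary $\mathsf{RL}$-chains rather than only integral ones. Since $\alg D$ need not be integral, $u$ need not be its least element, so the ``collapse'' facts must be phrased as inequalities of the form $xy\leq u$ obtained from monotonicity and residuation, never as equalities $xy=u$. Since $\alg D$ need not be commutative and $b$ comes from $\alg B$ while $c,d$ come from $\alg C$, I must apply each residual on the correct side so that throughout I only ever manipulate a single product such as $b\cdot d$ (or $v\cdot c$), and never silently swap $bd$ with $db$; inspecting the five cases above confirms this is the case, so no commutativity between the $\alg B$-part and the $\alg C$-part of $\alg D$ is ever assumed.
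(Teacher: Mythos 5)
Your proof is correct and uses the same mechanism as the paper's: a case split on the position of $b$ in the chain $\alg D$, with each case closed by combining order-preservation of the product with a residuation identity such as $b\backslash u=b$ or $u/d=c$. The paper's version is more economical---after noting $b\neq c$ (since $vb=b$ but $vc=d\neq c$) it needs only the two cases $b<c$ and $c<b$, resolved by $bc\leq c^2=u\Rightarrow c\leq b\backslash u=b$ and $cb\leq b^2=u\Rightarrow b\leq c\backslash u=c$---but your five cases are exhaustive and each is argued correctly, with the non-commutative and non-integral caveats handled properly.
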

\begin{proof}
First, let us show that $\VS$ is an essential span, i.e. that the identity map from $\alg A$ to $\alg C$ is an essential embedding. This follows from the fact that $\alg A$ and $\alg C$ share the same coatom, and that congruence filters are upwards closed; hence, $a$ belongs to all nontrivial filters of both $\alg A$ and $\alg C$. In other words, let $\theta$ be any nontrivial congruence of $\alg C$, i.e. $\theta \neq \Delta_{\alg C}$, then the pair $(1, a) \in \theta$. If one considers $\theta \cap A^2$, then $(1,a) \in \theta \cap A^2$, getting that $\theta \cap A^2 \neq \Delta_{\alg A}$. Thus, the identity map is an essential embedding and $\VS$ is an essential span.

It is left to prove that there is no amalgam for $\VS$ in residuated chains. Assume by way of contradiction that there is an amalgam $(\alg D, h,k)$ to $\VS$ where $\alg D$ is a totally ordered residuated lattice. Note that both $\alg B$ and $\alg C$ must embed into $\alg D$, and in particular, the embeddings must respect the elements of $\alg A$, i.e. for all $x \in A$, $h(x) = k(x)$. To ease the notation, let us consider $h$ and $k$ to be the identity maps. Notice that since $ab=b$ in $\alg B$ but $ac=d \neq c$ in $\alg C$, necessarily $b \neq c$ in $\alg D$. 

Now, since $\alg D$ is a chain, $b$ and $c$ are comparable.
However, if one assumes that $b < c$, by order preservation: $$bc \leq c^2=\bot$$ and therefore by residuation, and given that $b \backslash \bot=b$ in $\alg B$ (hence also in $\alg D$), $$c \leq b \backslash \bot =b,$$ which contradicts the fact that $b < c$. 
Similarly, if one assumes that $c < b$, then we get $cb \leq b^2=\bot$ and thus $b \leq c \backslash \bot = c$, where the last equality holds in $\alg C$; this contradicts the fact that $c < b$.

Therefore, there is no amalgam $(\alg D, h,k)$ to $\VS$ in $\mathsf{RL}$-chains. 
\end{proof}
A first direct consequence of the above lemma is the following.
\begin{theorem}
	The class of totally ordered residuated lattices does not have the AP.
\end{theorem}
\begin{remark}
	A possibly illuminating way of thinking about the previous example in FL$_{ew}$-algebras is the following. Consider the algebras in the $\VS$ formation as $0$-bounded, where the constant $0$ is then evaluated at $\bot$. Then both $\alg B$ and $\alg C$ have a negation fix-point, namely $b$ and $c$ respectively. Now, both $\alg B$ and $\alg C$ must be subalgebras of the potential amalgam $\alg D$. But an FL$_{ew}$-chain can have only one negation fix-point: therefore in $\alg D$ the elements $b$ and $c$ must coincide. However, $b$ and $c$ multiply differently with respect to $a \in \alg A$, since $ab = b$ but $ac \neq c$. Thus, there can be no amalgam in chains. 
\end{remark}
If one drops the requirement of the total order, an amalgam to $\VS$ can be found;
indeed, $\vv{FL}_{ew}$ has the AP. For instance, one can consider the commutative residuated lattice whose operations uniquely extend those in the following diagram:
\begin{center}
\begin{tikzpicture}
\draw (6.5, 3.5) -- (6.5,2.5) -- (6,1.75) --(6.5, 1) -- (7,1.5) -- (7, 2) -- (6.5,2.5);
\draw (6.5,0.5) -- (6.5,1);
 \fill (6.5,3.5) circle (0.05);
 \node at (6.65,3.6) {$1$};
 \fill (6.5,3) circle (0.05);
 \node at (7.17,3.1) {$a = a^2$}; 
 \fill (6.5,2.5) circle (0.05);
 \node at (7.15,2.55) {$e = ae$};
 \fill (6,1.75) circle (0.05);
 \node at (5.4,1.8) {$b  = ab$};
 \fill (7, 2) circle (0.05);
 \node at (7.2, 2.05) {$c$};
 \fill (7,1.5) circle (0.05);
 \node at (8.05,1.55) {$d = ac = ad$};
  \node at (7.55,0.5) {$\bot = c^2 = b^2$};
  \fill (6.5, 1) circle (0.05);
  \node at (7.6,1) {$f = e^2 = bc$};
  \fill (6.5,0.5) circle (0.05);
\end{tikzpicture}\end{center}
And if one keeps the total order, but restricts to the division-free signature $\{\cdot, \land, \lor, 1\}$, then once again one can find an amalgam. See the following chain, which is an amalgam for the V-formation given by the $\{\cdot, \land, \lor, 1\}$-reducts of the algebras in $\VS$:
\begin{center}
\begin{tikzpicture}
 \draw (7.5,2.5) -- (7.5,0);
 \fill (7.5,2.5) circle (0.05);
 \node[right] at (7.5,2.5) {$1$}; 
   \fill (7.5,2) circle (0.05);
 \node[right] at (7.5,2) {$a = a^2$};
 \fill (7.5,1.5) circle (0.05);
 \node[right] at (7.5,1.5) {$b = ab$};
 \fill (7.5,1) circle (0.05);
 \node[right] at (7.5,1) {$c$};
 \fill (7.5,0.5) circle (0.05);
 \node[right] at (7.5,0.5) {$d = ac = ad$};
  \node[right] at (7.5, 0) {$\bot = b^2$};
  \fill (7.5, 0) circle (0.05);
\end{tikzpicture}\end{center} 
Note that this chain above is the reduct of a residuated lattice, but here $c \to \bot = b \neq c$.

Let us proceed to the main results of this section. The above Lemma \ref{lemma:noamalgam} provides a failure of the necessary condition in Theorem \ref{thm:fussnersantchi} for any semilinear variety to which the algebras in the $\VS$ formation belong.
\begin{theorem}\label{theorem:noAP}
	Let $\vv V$ be a variety of semilinear residuated lattices such that the algebras $\alg A, \alg B, \alg C$ in $\VS$ belong to $\vv V$. Then $\vv V$ does not have the AP. 
\end{theorem}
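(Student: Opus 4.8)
The plan is to derive the failure of the AP in $\vv V$ from the failure of the 1AP exhibited by $\VS$, using the reduction of Theorem \ref{thm:fussnermetcalfe}. Since $\vv V$ is assumed semilinear and to have the CEP, the only hypothesis of that theorem still needing verification is that $\vv V_{\fsi}$ is closed under subalgebras; once this is in place, Theorem \ref{thm:fussnermetcalfe} yields the equivalence ``$\vv V$ has the AP if and only if $\vv V_{\fsi}$ has the 1AP'', and it suffices to refute the right-hand side.

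First I would identify $\vv V_{\fsi}$ concretely. As recalled after Theorem \ref{thm:fussnermetcalfe}, a semilinear residuated lattice is finitely subdirectly irreducible exactly when it is a chain, so $\vv V_{\fsi}$ is precisely the class of totally ordered members of $\vv V$. A subalgebra of a chain is again a chain and of course remains in $\vv V$; hence $\vv V_{\fsi}$ is closed under subalgebras, and the hypotheses of Theorem \ref{thm:fussnermetcalfe} are met. In the pointed case one takes the bottom element $u$ to be the constant $0$, as noted just before the statement, so the same discussion applies verbatim to semilinear $\vv{FL}$-algebras.

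Next I would observe that $\VS$ is a V-formation living inside $\vv V_{\fsi}$: by hypothesis $\alg A, \alg B, \alg C \in \vv V$, all three are chains, hence they lie in $\vv V_{\fsi}$, while ${\rm id}_{\alg A}$ and ${\rm id}_{\alg B}$ are the required embeddings. Now Proposition \ref{lemma:failure1ap} applies to any class of totally ordered residuated lattices containing these three algebras; taking that class to be $\vv V_{\fsi}$ itself, it yields that $\vv V_{\fsi}$ does not have the 1AP. Concretely, any putative one-amalgam of $\VS$ inside $\vv V_{\fsi}$ would, by the argument of Proposition \ref{lemma:failure1ap}, have to be a genuine amalgam in a chain, contradicting Lemma \ref{lemma:noamalgam}. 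Combining this with Theorem \ref{thm:fussnermetcalfe} gives that $\vv V$ fails the AP, as desired.

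The only genuinely delicate point is making sure the failure of the 1AP survives the passage to the possibly smaller subclass $\vv V_{\fsi}$, rather than holding only in the full class of $\mathsf{RL}$-chains: one must check that $\alg A, \alg B, \alg C$ really sit in $\vv V_{\fsi}$ (guaranteed by hypothesis) and that any one-amalgam is forced to be a chain in $\vv V_{\fsi}$, so that Lemma \ref{lemma:noamalgam} still applies. Since Proposition \ref{lemma:failure1ap} is already stated for an arbitrary class of $\mathsf{RL}$-chains containing $\VS$, this causes no difficulty, and no further computation is required.
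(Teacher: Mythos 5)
Your proposal is correct and follows essentially the same route as the paper: identify $\vv V_{\fsi}$ with the chains of $\vv V$ (closed under subalgebras), invoke Theorem \ref{thm:fussnermetcalfe} via the CEP, and refute the 1AP of $\vv V_{\fsi}$ by applying Proposition \ref{lemma:failure1ap} to the V-formation $\VS$. The extra care you take in checking that the 1AP failure persists in the smaller class $\vv V_{\fsi}$ is a reasonable elaboration of what the paper leaves implicit.
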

\begin{proof}
	Since $\vv V$ is semilinear, its subdirectly irreducible members are chains. By Theorem \ref{thm:fussnersantchi}, since $\alg A, \alg B, \alg C \in \vv V$ and by Lemma \ref{lemma:noamalgam} there is no essential amalgam to $\VS$ in $\vv V_{\si}$, $\vv V$ does not have the AP.
\end{proof}
Observe that by evaluating the constant $0$ at $\bot$ in the algebras of $\VS$, one obtains the same results for semilinear varieties of $\vv{FL}$. Let us call $\VS_0$ the corresponding V-formation of bounded chains.
\begin{theorem}\label{theorem:noAP0}
	Let $\vv V$ be a variety of semilinear FL-algebras such that the (0-bounded versions of the) algebras $\alg A, \alg B, \alg C$ in $\VS_0$ belong to $\vv V$. Then $\vv V$ does not have the AP. 
\end{theorem}
The following collects the main results that are consequences of the above theorems.
\begin{corollary}
	The following varieties of residuated lattices do not have the AP:
	\begin{enumerate}
	    \item $\mathsf{SemCIRL}$
	   \item $\mathsf{SemIRL}$
	   \item $\mathsf{SemCRL}$ \cite{FussnerSantschi24}
	   \item $\mathsf{SemRL}$ \cite{GilferezLeddaTsinakis2015}
	   \item all $n$-potent subvarieties of the above varieties for $n \geq 2$
	   \item  $\BG_{n}$ for all $n \geq 2$, and commutative subvarieties.
	\end{enumerate}
\end{corollary}
\begin{corollary}
	The following varieties of FL-algebras do not have the AP:
	\begin{enumerate}
	   \item $\mathsf{MTL}$
	   \item $\mathsf{SemFL_w}$
	   \item $\mathsf{SemFL_e}$ \cite{FussnerSantschi24}
	   \item $\mathsf{SemFL}$
	   \item all $n$-potent subvarieties of the above varieties for $n \geq 2$
	   \item $\BG_{n}^0$ for all $n \geq 2$, and commutative subvarieties.
	\end{enumerate}
\end{corollary}
We note that our counterexample provides an alternative proof of the failure of the AP in $\mathsf{SemCRL}$ and $\mathsf{SemFL_e}$, proven in \cite{FussnerSantschi24}, and $\mathsf{SemRL}$ proved in \cite{GilferezLeddaTsinakis2015}.
We will now modify the algebras in the $\VS$ formation in order to show the failure of the AP in other interesting subvarieties of semilinear RLs.

First, consider the {\em lifting} of the algebras in $\VS$; i.e., the 1-sums $\alg 2 \oplus \alg A, \alg 2 \oplus \alg B, \alg 2 \oplus \alg C$. Then clearly $(\alg 2 \oplus \alg A, \alg 2 \oplus \alg B, \alg 2 \oplus \alg C,{\rm id}, {\rm id})$ is a V-formation, which we call $\VS_\ell$. The proof of Lemma \ref{lemma:noamalgam} straightforwardly adapts to $\VS_\ell$. 

A different example is obtained by taking the {\em disconnected rotations} of the algebras in $\VS$. This construction was inspired by Wro\'nski's reflection construction for BCK-algebras \cite{W83}, and was developed by Jenei in semigroups and residuated structures \cite{Je00,Je03}. 
Precisely, the disconnected rotation $\alg A^\delta$ of an IRL $\alg A$ is the $\sf{FL}_{ew}$-algebra whose lattice reduct is given by the union of $A$ and its disjoint copy $A' = \{a' : a \in A\}$ with dualized order, placed below $A$: for all $a, b \in A$,  $$ a' < b, \mbox{ and } a' \leq b' \mbox{ iff } b \leq a.$$
In particular, the top element of $\alg A^\delta$ is the top $1$ of $\alg A$ and the  bottom element of $\alg A^\delta$ is the copy $0 := 1'$ of the top $1$. 
$\alg A$ is a $0$-free subreduct, the products in $A'$ are all defined to be the bottom element $0=1'$, and furthermore, for all $a,b \in A$, $$a\cdot b' = (b/a)', \quad b'\cdot a = (a \back b)';$$ 
$$a \back b' = a' /b = (b\cdot a)', \quad a' \back b' = a/b, \quad b'/a' = b \back a.$$
For a pictorial intuition of the lifting and disconnected rotation see Figure \ref{fig:rotation}. 

 \begin{figure}
\begin{center}
\begin{tikzpicture}
\footnotesize{
 \draw (-2.5,1.2) arc (0:180: 0.5 and 1.1);
 \draw [dotted] (-3.5, 1.2) -- (-2.5, 1.2);
  \fill (-3,2.3) circle (0.05);  
     \fill (-3,0.5) circle (0.05);
  \node at (-3,0.2) {$0$}; 
    \node at (-3,2.5) {$1$}; 
      \node at (-3,1.7) {$A$};   
 \draw (0.5,1.2) arc (0:180: 0.5 and 1.1);
 \draw [dotted] (0.5, 1.2) -- (-0.5, 1.2);
  \draw (0.5,0.4) arc (0:-180: 0.5 and 1.1);
 \draw [dotted] (0.5, 0.4) -- (-0.5, 0.4);
  \fill (0,2.3) circle (0.05);
    \fill (0,-0.7) circle (0.05);
 \node at (0,2.5) {$1$}; 
  \node at (0.35,-1) {$1' = 0$};  
  \node at (0,1.7) {$A$}; 
  \node at (0,0) {$A'$}; 
    }
 \end{tikzpicture}\end{center} \caption{The lifting and disconnected rotation of an IRL $\alg A$.} \label{fig:rotation}
 \end{figure}
Let us call $\VS_\delta$ the V-formation $(\alg A^\delta, \alg B^\delta, \alg C^\delta, {\rm id}, {\rm id})$. Then once again the proof of Lemma \ref{lemma:noamalgam} easily adapts to $\VS_\delta$. 
\begin{lemma}
For $* \in \{\ell, \delta\}$, $\VS_*$ is an essential span and there is no amalgam to $\VS_*$ in the class of FL-chains.
\end{lemma}
Therefore:
\begin{theorem}
 Let $* \in \{\ell, \delta\}$.
	Let $\vv V$ be a variety of semilinear FL-algebras such that the algebras in $\VS_*$ belong to $\vv V$. Then $\vv V$ does not have the AP. 
\end{theorem}
Notice that the algebras in $\VS_\delta$ are all involutive, and so belong to the involutive subvariety of $\vv{MTL}$, named $\vv{IMTL}$. For the case of the lifting, we get that all the algebras in the V-formation are pseudocomplemented, i.e. it holds that $x \land \neg x = 0$ for all $x$. Equivalently (in semilinear varieties) it holds the Stone equation $\neg x \lor \neg \neg x = 1$. The subvariety of pseudocomplemented (or Stonean) MTL-algebras is usually called $\vv{SMTL}$.
We get the following main consequences.
\begin{corollary}
	The following varieties do not have the AP:
	\begin{enumerate}
		\item $\mathsf{IMTL}$
		\item $\mathsf{SMTL}$
		\item their $n$-potent subvarieties for all $n \geq 2$.
	\end{enumerate} 
\end{corollary}
\begin{remark}
	We observe that one can obtain other failures of the AP by applying appropriate constructions to the algebras in $\VS$. For instance, the lifting and disconnected rotation constructions can be seen uniformly as {\em generalized $n$-rotations}. This more general construction was introduced in \cite{BMU19}, extending the {\em generalized rotation} of \cite{AFU}, and  later adapted to the non-commutative case in \cite{GalatosUgolini}. Intuitively, generalized $n$-rotations are obtained by taking an IRL $\alg A$, attaching below it a rotated (possibly proper) \emph{nuclear image} of $\alg A$, and then adding a \L ukasiewicz chain of $n$ elements, $n-2$ of which are between the original structure and its rotated nuclear image. This construction has as another particular case  also the {\em connected rotation} construction. As above, one can obtain essentially the same proof as in Lemma \ref{lemma:noamalgam}, and derive that the AP fails in all varieties generated by generalized $n$-rotations of (C)IRLs, for all $n \geq 2$. 
\end{remark}

To conclude, let us rephrase some of our results in terms of interpolation properties. Since, in the presence of the CEP, the amalgamation of a variety of residuated lattices $\vv V$ corresponds to the deductive interpolation property of the corresponding logic $\cc L_{\vv V}$, we obtain the following consequences.
\begin{corollary}
	Let $\vv V$ be a semilinear variety of residuated lattices or FL-algebras with the CEP, such that the algebras (over the appropriate signature) in one among $\VS, \VS_0$, $\VS_\ell$, $\VS_\delta$, belong to $\vv V$. Then the corresponding logic $\cc L_{\vv V}$ does not have the deductive interpolation property. 
\end{corollary}
To highlight the most relevant novel results:
\begin{corollary}
	The following logics do not have the deductive interpolation property: MTL, IMTL, SMTL, and their $n$-potent extensions.
\end{corollary}


\end{document}